\theoremstyle{plain}
\newtheorem{prop}{Proposition}[section]
\newtheorem{thm}[prop]{Theorem}
\newtheorem{lem}[prop]{Lemma}
\newtheorem{cor}[prop]{Corollary}
\numberwithin{equation}{section}
\theoremstyle{definition}
\newtheorem{dfn}[prop]{Definition}
\newtheorem{ex}[prop]{Example}
\theoremstyle{remark}
\newtheorem*{rmk}{Remark}
\DeclareMathOperator{\sgn}{sgn}
\DeclareMathOperator{\re}{Re}
\DeclareMathOperator{\im}{Im}
\DeclareMathOperator{\supp}{supp}
\begin{document}
\title[Microlocal properties of the range]
{On some microlocal properties of the range of a pseudo-differential operator of principal type}
\author{Jens Wittsten}
\address{Center for Mathematical Sciences\\
Lund University\\
Box 118\\
S-221 00 Lund\\
Sweden}
\email{jens.wittsten@math.lu.se}
\keywords{Pseudo-differential operators, Microlocal solvability,
Principal type, Condition ($\varPsi$), Inclusion relations, Bicharacteristics}
\subjclass[2000]{Primary 35S05; Secondary 35A07, 58J40, 47G30}
\thanks{Research supported in part by the Swedish Research Council}
\begin{abstract}
The purpose of this paper is to obtain microlocal analogues of
results by L. Hörmander about inclusion relations between the ranges of
first order differential operators with coefficients in $C^\infty$
which fail to be locally solvable. Using similar techniques, we shall study the properties of the
range of classical pseudo-differential operators of principal type which fail
to satisfy condition $(\varPsi)$.
\end{abstract}
\maketitle
\section{Introduction}
\noindent
In this paper we shall study the properties of the range of a
classical pseudo-differential operator $P\in \varPsi_{\mathrm{cl}}^m(X)$
that is not locally solvable, where $X$ is a $C^\infty$
manifold of dimension $n$. Here, classical means that the
total symbol of $P$ is an asymptotic sum of homogeneous terms,
\[\sigma_P(x,\xi)=p_m(x,\xi)+p_{m-1}(x,\xi)+\ldots,
\]
where $p_k$ is homogeneous of degree $k$ in $\xi$ and
$p_m$ denotes the principal symbol of $P$. When no confusion can occur
we will simply refer to $\sigma_P$ as the symbol of $P$. We shall restrict our study
to operators of principal type, which means that the Hamilton vector field
$H_{p_m}$ and the radial vector field are linearly independent when $p_m=0$.
We shall also assume that all operators are properly supported, that is, both projections
from the support of the kernel in $X\times X$ to $X$ are proper maps.
For such operators,
local solvability at a compact set $M\subset X$ means that for every $f$ in a
subspace of $C^\infty(X)$ of finite codimension there is a distribution $u$ in
$X$ such that
\begin{equation}\label{eqintrolocsolv}
Pu=f
\end{equation}
in a neighborhood of $M$. We can also define microlocal solvability at a set in the cosphere bundle,
or equivalently, at a conic set in $T^\ast(X)\smallsetminus 0$, the cotangent bundle of $X$ with the
zero section removed. By a conic set $K\subset T^\ast(X)\smallsetminus 0$ we mean a set that is
conic in the fiber, that is,
\[
(x,\xi)\in K \quad \Longrightarrow \quad (x,\lambda \xi)\in K \quad \text{for all }\lambda>0.
\]
If, in addition, $\pi_x(K)$ is compact in $X$, where $\pi_x:T^\ast(X)\to X$ is the projection,
then $K$ is said to be compactly based.
Thus, we say that $P$ is solvable at the compactly based cone $K \subset T^{\ast}(X) \smallsetminus 0$
if there is an integer $N$ such that for every
$f \in H_{(N)}^{\mathrm{loc}}(X)$
there exists a $u \in \mathscr{D}'(X)$ with $K \cap W\! F(Pu-f)=\emptyset$ (see Definition \ref{defrange}).

The famous example due to Hans Lewy ~\cite{le} of the existence of functions $f\in C^\infty(\mathbb{R}^3)$
such that the equation
\[\partial_{x_1} u + i\partial_{x_2} u -2i(x_1+i x_2) \partial_{x_3} u = f
\]
does not have any solution $u\in \mathscr{D}'(\varOmega)$ in any open non-void
subset $\varOmega\subset \mathbb{R}^3$ 
contradicted the assumption that partial differential equations with smooth coefficients
behave as analytic partial differential equations, for which existence of analytic solutions
is guaranteed by the Cauchy-Kovalevsky theorem. This example
led to an extension due to Hörmander ~\cite{ho10,ho11}
in the sense of a necessary condition for a differential equation $P(x,D)u=f$ to have a
solution locally for every $f\in C^\infty$.
In fact (see ~\cite[Theorem $6.1.1$]{ho0}),
if $\varOmega$ is an open set in $\mathbb{R}^n$, and $P$ is a differential
operator of order $m$ with coefficients in $C^\infty(\varOmega)$ such that the differential equation
$P(x,D)u=f$
has a solution $u\in \mathscr{D}'(\varOmega)$ for every $f\in C_0^\infty(\varOmega)$, then
$\{p_m, \overline{p}_m \}$ must vanish at every point $(x,\xi)\in \varOmega \times \mathbb{R}^n$ for which
$p_m(x,\xi)=0$,
where
\[ \{ a , b \}=\sum_{j=1}^n \partial_{\xi_j}a \, \partial_{x_j}b- \partial_{x_j} a
\, \partial_{\xi_j} b
\]
denotes the Poisson bracket.

In addition to his example, Lewy conjectured that differential operators which fail
to have local solutions are essentially uniquely determined by the range. Later
Hörmander ~\cite[Chapter $6.2$]{ho0} proved that
if $P$ and $Q$ are two first order differential operators with coefficients in $C^\infty(\varOmega)$
and in
$C^1(\varOmega)$, respectively, such that
the equation $P(x,D)u=Q(x,D)f$ has a solution $u\in \mathscr{D}'(\varOmega)$
for every $f\in C_0^\infty(\varOmega)$, and $x$ is a point in $\varOmega$ such that
\begin{equation}\label{eq:introeq0.9}
p_1(x,\xi)=0 , \quad \{ p_1 , \overline{p}_1 \}(x,\xi)\neq 0
\end{equation}
for some $\xi\in \mathbb{R}^n$, then there is a constant $\mu$ such that (at the fixed point $x$)
\[ {}^t Q(x,D)=\mu\, {}^tP(x,D)
\]
where ${}^t Q$ and ${}^t P$ are the formal adjoints of $Q$ and $P$. If \eqref{eq:introeq0.9}
holds for a dense set of points
$x$ in $\varOmega$ and if the coefficients of $p_1(x,D)$ do not vanish simultaneously in $\varOmega$, then there is a
function $\mu \in C^1(\varOmega)$ such that
\begin{equation}\label{eq:introeq1}
Q(x,D) u = P(x,D) (\mu u).
\end{equation}
Furthermore, for such an operator $P$ and function $\mu
$, the equation $P(x,D)u=\mu P(x,D)f$
has a solution $u\in\mathscr{D}'(\varOmega)$ for every $f\in C_0^\infty(\varOmega)$ if and only if
$p_1(x,D)\mu=0$.

Hörmander also showed that
this result extends to operators of higher order in the following way (see
~\cite[Theorem $6.2.4$]{ho0}). If $P$ is a differential operator
of order $m$ with coefficients in $C^\infty(\varOmega)$ and $\mu$ is a function in $C^m(\varOmega)$ such
that the equation
\[P(x,D)u=\mu P(x,D)f
\]
has a solution $u\in \mathscr{D}'(\varOmega)$ for every $f\in C_0^\infty(\varOmega)$,
then it follows that
\begin{equation*}
\sum_{j=1}^n \partial_{\xi_j}p_m(x,\xi) \partial_{x_j} \mu(x)=0
\end{equation*}
for all $x\in \varOmega$ and $\xi\in \mathbb{R}^n$ such that
\begin{equation}\label{eqintrocond1}
\{ p_m, \overline{p}_m\}(x,\xi) \neq 0, \quad p_m(x,\xi)=0.
\end{equation}
This means that the derivative of $\mu$ must vanish along every
bicharacteristic element
with initial data $(x,\xi)$ giving rise to non-existence of solutions.

If $P$ is a pseudo-differential operator such that $P$ is microlocally elliptic near $(x_0,\xi_0)$,
then there exists a microlocal inverse,
called a parametrix $P^{-1}$ of $P$, such that in a conic neighborhood of $(x_0,\xi_0)$
we have $PP^{-1}=P^{-1}P=\mathrm{Identity}$ modulo smoothing operators. $P$ is then trivially seen to
be microlocally solvable near $(x_0,\xi_0)$, and for any pseudo-differential operator $Q$
we can write $Q=PP^{-1} Q+R=PE+R$ where $R$ is a smoothing operator. When the range of $Q$
is microlocally contained in the range of $P$, we will show the existence of this type of
representation for $Q$ in the case when $P$ is a non-solvable pseudo-differential operator of principal type,
although we will have to content ourselves with a weaker statement concerning the Taylor coefficients
of the symbol of the operator $R$ (see Theorem \ref{bigthm1} for the precise formulation of the result).
Note that when $P$ is solvable but non-elliptic we cannot hope to obtain such a representation
in general; see the remark on page \pageref{rmk:solvablebutnonelliptic1}.

For pseudo-differential operators of principal type, Hörmander ~\cite{ho4} proved
that local solvability in the sense of \eqref{eqintrolocsolv}
implies that $M$ has an open neighborhood $Y$ in $X$ where
$p_m$ satisfies condition $(\varPsi)$, which means that
\begin{multline}\label{intropsi}
\im ap_m \text{ does not change sign from $-$ to $+$} \\
\text{along the oriented bicharacteristics of } \re ap_m
\end{multline}
over $Y$ for any $0\neq a\in C^\infty(T^*(Y)\smallsetminus 0)$.
The oriented bicharacteristics are the positive flow-outs of the Hamilton vector field
$H_{\re ap_m}
$ on $\re ap_m=0$.
The proof relies on
an idea due to Moyer ~\cite{mo},
and uses the fact that condition \eqref{intropsi} is invariant
under multiplication of $p_m$ with nonvanishing factors,
and conjugation of $P$ with elliptic
Fourier integral operators.

Rather recently Dencker ~\cite{de} proved that condition $(\varPsi)$ is also sufficient for local
and microlocal solvability for operators of principal type. To get local solvability at a point $x_0$, the
strong form of the nontrapping condition at $x_0$,
\begin{equation}\label{eq:intronontrapping}
p_m=0 \quad \Longrightarrow \quad \partial_\xi p_m\neq 0,
\end{equation}
was assumed. This was the original condition for principal type of Nirenberg and Treves ~\cite{nitr},
which is always obtainable microlocally after a canonical transformation.
Thus, we shall 
study
pseudo-differential operators that fail to satisfy condition $(\varPsi)$ in
place of the condition given by \eqref{eqintrocond1}, and
show that such operators are, in analogue with the
inclusion relations
between the ranges of differential operators that fail to be locally solvable,
essentially uniquely determined by the range.
However, note that even though \eqref{eqintrocond1} is a microlocal condition, one obtains
the mentioned local results for differential operators because of the analyticity in $\xi$
of the corresponding symbol. Since this is
not true in general for pseudo-differential operators, our results will be
inherently microlocal.
We will combine the techniques used in ~\cite{ho0} to prove the inclusion relations for
differential operators with
the approach used in ~\cite{ho4} to prove the necessity of condition $(\varPsi)$ for local
solvability of pseudo-differential operators of principal type.

It should be noted that
it is possible to extend these results to certain systems of
pseudo-differential operators. We are currently working on
a generalization to systems of principal type
and constant characteristics, although this is not adressed here.

The author is grateful to Professor Nils Dencker at Lund
University for suggesting the problem that led to the results presented here,
and also for many helpful discussions on the subject.

\section{Non-solvable Operators of Principal Type}

\noindent
Let $X$ be a $C^\infty$
manifold of dimension $n$. In what follows, $C$ will be taken to be a new constant
every time unless stated otherwise. We let $\mathbb{N}=\{0,1,2,\ldots\}$,
and if $\alpha\in\mathbb{N}^n$ is a multi-index $\alpha=(\alpha_1,\ldots,\alpha_n)$,
we let
\[
D_x^\alpha=D_{x_1}^{\alpha_1}\ldots D_{x_n}^{\alpha_n},
\]
where $D_{x_j}=-i\partial_{x_j}$.
We shall also employ the standard notation $f_{(\alpha)}^{(\beta)}(x,\xi)=\partial_x^\alpha\partial_\xi^\beta
f(x,\xi)$ for multi-indices $\alpha,\beta$.

In this section we will follow the outline of Chapter $26$, Section $4$ of ~\cite{ho4}.
Recall that the Sobolev space $H_{(s)}(X)$, $s\in\mathbb{R}$, is a local space,
that is, if $\varphi\in C_0^\infty(X)$ and $u\in H_{(s)}(X)$
then $\varphi u \in H_{(s)}(X)$, and the corresponding operator
of multiplication is continuous. Thus we can define
\[
H_{(s)}^{\mathrm{loc}}(X)=\{u\in \mathscr{D}'(X) :
\varphi u\in H_{(s)}(X), \forall \varphi \in C_0^\infty(X)\}.
\]
This is a Fréchet space, and its dual with respect to the
inner product on $L^2$ is $H_{(-s)}^{\mathrm{comp}}(X)=H_{(-s)}^{\mathrm{loc}}(X)\cap \mathscr{E}'(X)$.
\begin{dfn}\label{defrange}If $K \subset T^{\ast}(X) \smallsetminus 0$ is a compactly based cone we
shall say that the range of $Q\in \varPsi_{\mathrm{cl}}^m(X)$
is microlocally contained in the range of $P\in \varPsi_{\mathrm{cl}}^k(X)$ at $K$ if there
exists an integer $N$ such that for every
$f\in H_{(N)}^{\mathrm{loc}}(X)$, there exists a $u\in \mathscr{D}'(X)$ with $W\! F(Pu-Qf)\cap K = \emptyset$.
\end{dfn}
\noindent If $I\in\varPsi_{\mathrm{cl}}^0(X)$ is the identity on $X$, we obtain from Definition \ref{defrange}
the definition of microlocal solvability for a
pseudo-differential operator (see ~\cite[Definition $26.4.3$]{ho4}) by setting $Q=I$.
Thus, the range of the identity is microlocally contained in the range of $P$ at $K$
if and only if $P$ is microlocally solvable at $K$.
Note also that if $P$ and $Q$ satisfy Definition \ref{defrange} for some integer $N$,
then due to the inclusion
\[H_{(t)}^{\mathrm{loc}}(X)\subset H_{(s)}^{\mathrm{loc}}(X), \quad \textrm{if } s<t,
\]
the statement also holds for any integer $N' \geq N$. Hence $N$ can always be assumed to be positive.
Furthermore, the property is preserved if $Q$ is composed with a properly supported pseudo-differential
operator $Q_1\in \varPsi_{\mathrm{cl}}^{m'}(X)$ from the right.
Indeed,
let $g$ be an arbitrary function in
$H_{(N+m')}^{\mathrm{loc}}(X)$. Then $f=Q_1 g\in H_{(N)}^{\mathrm{loc}}(X)$
since $Q_1$ is continuous
\[
Q_1:H_{(s)}^{\mathrm{loc}}(X)\rightarrow H_{(s-m')}^{\mathrm{loc}}(X)
\]
for every $s\in\mathbb{R}$,
so by Definition \ref{defrange}
there exists a $u\in \mathscr{D}'(X)$ with $W\! F(Pu-Qf)\cap K = \emptyset$.
Hence the range of $QQ_1$
is microlocally contained in the range of $P$ at $K$ with the integer
$N$ replaced by $N+m'$.

The property given by Definition \ref{defrange} is also
preserved under composition of both $P$ and $Q$ with a properly supported pseudo-differential
operator from the left. This follows immediately
from the fact that properly supported
pseudo-differential operators are microlocal, that is,
\[
W\! F(Au)\subset W\! F(u)\cap W\! F(A), \quad u\in \mathscr{D}'(X).
\]
\begin{rmk}\label{rmk:solvabilityineprime}
It should be pointed out that in Definition \ref{defrange} we may always assume that
$f\in H_{(N)}^{\mathrm{comp}}(X)$ and $u\in\mathscr{E}'(X)$ when considering a fixed
cone $K$. In fact, assume
\[
Qf=Pu+g
\]
where $f\in H_{(N)}^{\mathrm{loc}}(X)$ and $u, g\in\mathscr{D}'(X)$
with $W\! F(g)\cap K=\emptyset$, and let $Y\Subset X$ satisfy $K\subset
T^\ast(Y)\smallsetminus 0$.
(We write $Y\Subset X$ when $\overline{Y}$ is compact and contained in $X$.)
Since $P$ and $Q$ are properly supported
we can find $Z_1, Z_2\subset X$ such that $Pv=0$ in $Y$ if $v=0$ in $Z_1$,
and $Qv=0$ in $Y$ if $v=0$ in $Z_2$. We may of course assume that
$Y\Subset Z_j$, $j=1,2$. Fix $\phi_j\in C_0^\infty(X)$ with $\phi_j=1$ on $Z_j$.
Then we have $Pu=P(\phi_1 u)$ and $Qf=Q(\phi_2 f)$ in $Y$, so
\[
\emptyset=W\! F(Qf-Pu)\cap K=W\! F(Q(\phi_2f)-P(\phi_1u))\cap K
\]
where $\phi_1 u$ and $\phi_2f$ have compact support. Hence we may assume that $u\in\mathscr{E}'(X)$
and $f\in H_{(N)}^{\mathrm{comp}}(X)=H_{(N)}^{\mathrm{loc}}(X)\cap\mathscr{E}'(X)$
to begin with. Note that this also implies
$g=Qf-Pu\in\mathscr{E}'(X)$ since $P$ and $Q$ are properly supported.
\end{rmk}

The following easy example will prove useful when discussing inclusion relations
between the ranges of solvable but non-elliptic operators.
\begin{ex}\label{ex:d1solvable}
If $X\subset\mathbb{R}^n$ is open, and $K\subset T^\ast (X)\smallsetminus 0$ is a compactly based cone,
then the range of $D_1=-i\partial / \partial x_1$ is microlocally contained in
the range of $D_2$ at $K$. In fact, this is trivially true since both operators are surjective
$\mathscr{D}'(X)\rightarrow\mathscr{D}'(X)/C^\infty (X)$. To see that for example $D_1$
is surjective we note that
by the remark on page \pageref{rmk:solvabilityineprime}
it suffices to show that
there exists a number $N\in\mathbb{Z}$ such that the equation $D_1 u =f$
has a solution $u\in \mathscr{D}'(X)$ for every
$f\in H_{(N)}^{\mathrm{comp}}(X)=H_{(N)}^{\mathrm{loc}}(X)\cap\mathscr{E}'(X)$.
By ~\cite[Theorem $10.3.1$]{ho2}
this is satisfied for every $N\in\mathbb{Z}$ if $u\in H_{(N+1)}^{\mathrm{loc}}(X)$
is given by $E\ast f$ where $E$ is the regular fundamental solution of $D_1$.
\end{ex}

Just as the microlocal solvability of a pseudo-differential operator $P$
gives an a priori estimate for the adjoint
$P^\ast$, we have the following result for operators satisfying Definition \ref{defrange}.
\begin{lem}\label{lemrange1}Let $K \subset T^{\ast}(X) \smallsetminus 0$ be a compactly based cone. Let
$Q\in \varPsi_{\mathrm{cl}}^m(X)$ and $P\in \varPsi_{\mathrm{cl}}^k(X)$
be properly supported pseudo-differential operators such that
the range of $Q$ is microlocally contained in the range of $P$ at $K$. If $Y\Subset X$ satisfies $K\subset
T^*(Y)$ and if $N$ is the integer in Definition \ref{defrange}, then for every positive integer $\kappa$
we can find a constant $C$, a positive integer $\nu$ and a properly
supported pseudo-differential operator $A$ with $W\! F(A)\cap K= \emptyset$ such that
\begin{equation}\label{rangeeq1}
\|Q^*v\|_{(-N)}\leq C(\|P^*v\|_{(\nu)}+\|v\|_{(-N-\kappa-n)}+\|Av\|_{(0)})
\end{equation}for all $v\in C_0^\infty(Y)$.
\end{lem}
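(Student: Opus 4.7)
The plan is to adapt to the present setting H\"ormander's derivation of the microlocal a priori estimate for the adjoint of a solvable operator (essentially the proof of Lemma~26.4.5 in~\cite{ho4}). In that argument the identity operator appears where we must now accommodate $Q$; this extra factor will enter only at the final duality step and cost essentially nothing.

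First I would use Remark~\ref{rmk:solvabilityineprime} to fix $Y'$ with $Y \Subset Y' \Subset X$ and $K \subset T^\ast(Y')$, together with a compact set $L \subset X$, so that for every $f$ in the Banach space $H_{(N)}^{\mathrm{comp}}(\overline{Y}')$ there exist $u,g \in \mathscr{E}'(L)$ with $Qf = Pu + g$ and $W\!F(g) \cap K = \emptyset$. I would then fix a properly supported $B_0 \in \varPsi_{\mathrm{cl}}^0(X)$ that is microlocally the identity in a conic neighborhood of $K$, so that $W\!F(I-B_0) \cap K = \emptyset$.

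Next comes the Baire step. For each positive integer $j$ I would introduce the convex, symmetric subset
\[
F_j = \bigl\{\, f \in H_{(N)}^{\mathrm{comp}}(\overline{Y}') : \exists\, u \in \mathscr{E}'(L),\ \|u\|_{(-j)} \leq j,\ \|B_0(Qf - Pu)\|_{(j)} \leq j\,\bigr\}.
\]
By weak-$\ast$ compactness of norm-bounded sets in $H_{(-j)}^{\mathrm{comp}}(L)$ and $H_{(j)}^{\mathrm{comp}}$ together with sequential continuity of $P$, $Q$ and $B_0$, each $F_j$ is closed in $H_{(N)}^{\mathrm{comp}}(\overline{Y}')$, and by the reduction above $\bigcup_j F_j$ exhausts this space. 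Baire's theorem and linear rescaling then supply a positive integer $\nu$ and a constant $C$ such that every $f$ admits a choice of $u$ with $g = Qf - Pu$ satisfying $\|u\|_{(-\nu)} + \|B_0 g\|_{(\nu)} \leq C\|f\|_{(N)}$.

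To obtain \eqref{rangeeq1} I would dualize. For $v \in C_0^\infty(Y)$,
\[
(Q^\ast v, f) = (v, Pu + g) = (P^\ast v, u) + (v, B_0 g) + ((I-B_0)^\ast v, g),
\]
bounded term by term as $\|P^\ast v\|_{(\nu)}\|u\|_{(-\nu)}$, $\|v\|_{(-\nu)}\|B_0 g\|_{(\nu)}$, and $\|(I-B_0)^\ast v\|_{(-s)}\|g\|_{(s)}$ for $s$ negative enough that $\|g\|_{(s)}$ is controlled by $\|f\|_{(N)}$ via the Baire bound on $u$ and continuity of $P$, $Q$. Taking $\nu \geq N + \kappa + n$ and applying the Sobolev embedding $\|v\|_{(-\nu)} \leq C\|v\|_{(-N-\kappa-n)}$ converts the second term into the desired compact term. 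Defining $A$ to be the composition of $(I-B_0)^\ast$ with a fixed properly supported elliptic factor of order $-s$ then gives $W\!F(A) \cap K = \emptyset$ and a bound of the third term by $C\|Av\|_{(0)}\|f\|_{(N)}$. Taking the supremum over $\|f\|_{(N)} \leq 1$ yields \eqref{rangeeq1}.

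The main obstacles I expect are two. Closedness of the $F_j$ requires passing to weak-$\ast$ limits of $u_k$ and $B_0 g_k$ of uniform compact support while preserving the relation $Qf_k = Pu_k + g_k$; the reduction to $u,g \in \mathscr{E}'(L)$ secured by Remark~\ref{rmk:solvabilityineprime} is precisely what makes this possible. The construction of $A$ is the other delicate point: it must simultaneously satisfy $W\!F(A) \cap K = \emptyset$, absorb $(I - B_0)^\ast$ modulo smoothing, and be of high enough order to control the (possibly very negative) Sobolev regularity of $g$ supplied by the Baire step.
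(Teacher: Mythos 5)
Your proposal is structurally sound as a Baire-category reformulation of H\"ormander's argument, and the dualization step and the final construction of $A$ from $(I-B_0)^\ast$ are reasonable. But there is a genuine gap in the exhaustion $\bigcup_j F_j = H_{(N)}^{\mathrm{comp}}(\overline{Y}')$, and it comes from fixing $B_0$ before seeing $g$.

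You require $\|B_0 (Qf - Pu)\|_{(j)} \le j$ with $j \to \infty$; for a given $f$ and $g = Qf - Pu$ this forces $B_0 g$ to lie in $H_{(j)}$ for arbitrarily large $j$, i.e.\ to be smooth. Now $B_0$ is of order zero and is fixed in advance, so $B_0 g$ has exactly the same Sobolev regularity as $g$, which is only $H_{(-s)}$ for some $s$ depending on $u$ and $f$. The only way to gain smoothness is to have $W\!F(B_0) \cap W\!F(g) = \emptyset$, but that cannot be arranged uniformly: the hypothesis gives $W\!F(g) \cap K = \emptyset$, and $W\!F(g)$ may approach $K$ arbitrarily closely, while $W\!F(I-B_0)\cap K = \emptyset$ says nothing about where $W\!F(B_0)$ lives away from $K$. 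Consequently $F_j$ does not exhaust, and the Baire step collapses. If instead you relax the condition on $B_0 g$ to a fixed or negative Sobolev index, the exhaustion becomes trivial but the conclusion you extract then pairs $(v, B_0 g)$ with a \emph{positive} Sobolev norm of $v$, which trivializes the estimate \eqref{rangeeq1} rather than proving it.

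The paper resolves exactly this difficulty by letting the microlocal cutoff depend on $g$: one chooses, for each fixed $f$, operators $B_1 + B_2 = I$ with $W\!F(B_1) \cap W\!F(g) = \emptyset$ (so $B_1 g$ is actually smooth) and $W\!F(B_2) \cap K = \emptyset$. The $B_2 g$ term is handled by inserting a parametrix of a fixed elliptic operator of order $\mu$ to produce $\|A v\|_{(0)}$ with $A = B B_2^\ast$. The resulting $A$ still depends on $f$, and the uniformity is recovered not by Baire on one family of sets but by equipping $C_0^\infty(Y)$ with the metrizable topology generated by a \emph{countable} family of seminorms $\|A_\nu v\|_{(0)}$, $A_\nu$ noncharacteristic on sets increasing to $(T^\ast X \smallsetminus 0) \smallsetminus K$, and invoking separate continuity implies joint continuity of the sesquilinear form $(\chi Q f, v)$ on Hilbert $\times$ metrizable. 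Any $B_2$ depending on $g$ is eventually dominated by one of the $A_\nu$. To repair your argument you would need to reproduce this flexibility, e.g.\ by introducing a countable family $B_0^{(k)}$ whose complements of wavefront set exhaust $(T^\ast X\smallsetminus 0) \smallsetminus K$, and run Baire over the doubly indexed family $F_{j,k}$ -- at which point you have essentially reconstructed the paper's proof.
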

\noindent Since \eqref{rangeeq1} holds for any $\kappa$,
it is actually superfluous to include the dimension $n$ in the norm
$\|v\|_{(-N-\kappa-n)}$. However, for our purposes, it turns out that this is the most convenient formulation.
\begin{proof}We shall essentially adapt the proof of Lemma $26.4.5$ in ~\cite{ho4}.
Let $\| \phantom{i} \|_{(s)}$ denote a norm in $H_{(s)}^{\mathrm{comp}}(X)$
which defines the topology in $H_{(s)}^c(M)=H_{(s)}^{\mathrm{loc}}(X)\cap\mathscr{E}'(M)$
for every compact set $M\subset X$. (The reason we change notation from $H_{(s)}^{\mathrm{comp}}(M)$
to $H_{(s)}^c(M)$ when $M$ is compact
is to signify that $H_{(s)}^c(M)$ is a Hilbert space for each fixed compact set $M$.)
Let $Y\Subset Z \Subset X$,
and take $\chi\in C_0^\infty(X)$ with
$\supp\chi=\overline{Z}$ to be a real valued cutoff function identically equal to $1$ in a neighborhood of $Y$.
Then $\chi Qf\in H_{(N-m)}^c(\overline{Z})$ for all $f\in H_{(N)}^{\mathrm{comp}}(X)$
since $Q$ is properly supported, and we claim that for fixed $f\in H_{(N)}^{\mathrm{comp}}(X)$
we have for some $C$, $\nu$ and $A$ as in the statement of the lemma
\begin{equation}\label{rangeeq2}
|(\chi Q f ,v )|
\leq C(\|P^*v\|_{(\nu)}+\|v\|_{(-N-\kappa-n)}+\|Av\|_{(0)})
\end{equation}
for all $v\in C_0^\infty(Y)$. Indeed, by hypothesis and the remark
on page \pageref{rmk:solvabilityineprime} we can find $u$ and $\tilde{g}$ in $\mathscr{E}'(X)$
with $W\! F(\tilde{g})\cap K=\emptyset$ such that
\[\chi Qf=Qf-(1-\chi)Qf=Pu+\tilde{g}-(1-\chi)Qf.
\]
Since $K\subset T^*(Y)$ and $\chi \equiv 1$ near $Y$ we get
$W\! F((1-\chi)Qf)\cap K=\emptyset$, so $\chi Q f= Pu+g$ for
some $g\in\mathscr{E}'(X)$ with $W\! F(g)\cap K=\emptyset$.
Thus
\[( \chi Q f , v )=(u, P^*v)+(g,v), \quad v\in C_0^\infty(Y).
\]
Now choose properly supported pseudo-differential operators $B_1$ and $B_2$ of order $0$ with $I=B_1+B_2$ and
$W\! F(B_1)\cap W\! F(g)=\emptyset$, $W\! F(B_2)\cap K=\emptyset$ which is possible since $W\! F(g)\cap K=\emptyset$.
Since $g\in \mathscr{E}'(X)$ and $B_1 : \mathscr{E}'(X) \to \mathscr{E}'(X)$ is continuous and microlocal
we get $B_1 g\in C_0^\infty(X)$ so $(B_1 g,v)$ can be estimated by $C\|v\|_{(-N-\kappa-n)}$.
Also, $g\in H_{(-\mu)}^{\mathrm{loc}}(X)$
for some $\mu>0$ so if $B$ is properly supported and elliptic of order $\mu$,
and $B'\in \varPsi_{\mathrm{cl}}^{-\mu}(X)$ is a properly
supported parametrix
of $B$ then
\begin{equation}\label{rangeeq3}
B_2^*v=B'BB_2^*v+LB_2^*v,
\end{equation}
where $L\in \varPsi^{-\infty}(X)$ and both $B'$ and $L$
are continuous $H_{(s)}^{\mathrm{comp}}(X)\to H_{(s+\mu)}^{\mathrm{comp}}(X)$. Hence
\[|(B_2 g , v)|\leq C \|B_2^* v\|_{(\mu)}\leq C(\|BB_2^* v\|_{(0)}+\|B_2^*v\|_{(0)}),
\]
and if we apply the identity \eqref{rangeeq3}
to $\|B_2^*v\|_{(0)}, \|B_2^*v\|_{(-\mu)}, \ldots$ sufficiently many times,
and then recall that $B_2^*$ is properly supported and of order $0$, we obtain
\[|(B_2 g , v)|\leq C(\|BB_2^* v\|_{(0)}+\|v\|_{(-N-\kappa-n)}).
\] Since we chose $B$ to be properly supported this gives \eqref{rangeeq2} with $A=BB_2^*$.

For fixed $\kappa$,
let $V$ be the space $C_0^\infty(Y)$ equipped with the topology defined by the semi-norms $\|v\|_{(-N-\kappa-n)}$,
$\|P^*v\|_{(\nu)}$, $\nu=1,2,\ldots$, and $\|Av\|_{(0)}$ where $A$ is a properly supported
pseudo-differential operator with $K\cap W\! F(A)=\emptyset$. It suffices to use a countable sequence $A_1, A_2, \ldots$
where $A_\nu$ is noncharacteristic of order $\nu$ in a set which increases to $(T^*(X)\smallsetminus 0)\smallsetminus K$
as $\nu \to \infty$. Thus $V$ is a metrizable space. The sesquilinear form $(\chi Q f , v)$ in the product of the
Hilbert space $H_{(N-m)}^c(\overline{Z})$ and the metrizable space $V$ is obviously continuous in $\chi Q f$ for fixed
$v$, and by \eqref{rangeeq2} it is also continuous in $v$ for fixed $f$. Hence it is continuous, which means that for
some $\nu$ and $C$
\[|(\chi Q f ,v )|
\leq C\|Q f\|_{(N-m)}(\|P^*v\|_{(\nu)}+\|v\|_{(-N-\kappa-n)}+\|Av\|_{(0)})
\]
for all $f\in H_{(N)}^{\mathrm{comp}}(X)$ and $v\in C_0^\infty(Y)$.
Now $Q$ is continuous from $H_{(N)}^{\mathrm{comp}}(X)$ to $H_{(N-m)}^{\mathrm{comp}}(X)$
so $\|Q f\|_{(N-m)}\le C\|f\|_{(N)}$. Since
$\chi \equiv 1$ near $Y$ and $(\chi Q)^*=Q^* \chi$
this yields
the estimate
\begin{equation}\label{rangeeq4}
|(f , Q^* v )|
\leq C\| f\|_{(N)}(\|P^*v\|_{(\nu)}+\|v\|_{(-N-\kappa-n)}+\|Av\|_{(0)}).
\end{equation}
For $v\in C_0^\infty(Y)$ and $Q^*$ properly supported we have $Q^*v\in C_0^\infty(X)$, and therefore also
$Q^*v\in H_{(-N)}^{\mathrm{loc}}(X)$. Viewing $Q^*v$ as a functional
on $H_{(N)}^{\mathrm{comp}}(X)$, the dual of $H_{(-N)}^{\mathrm{loc}}(X)$
with respect to the standard inner product on $L^2$,
we obtain
\eqref{rangeeq1} after taking the supremum over all $f\in H_{(N)}^{\mathrm{comp}}(X)$ with $\|f\|_{(N)}=1$.
\end{proof}

We will need the following analogue of ~\cite[Proposition $26.4.4$]{ho4}.
Recall that $\mathcal{H}: T^\ast(Y)\smallsetminus 0\rightarrow T^\ast(X)\smallsetminus 0$
is a canonical transformation if and only if
its graph $C_{\mathcal{H}}$
in the product
$(T^\ast(X)\smallsetminus 0)\times (T^\ast(Y)\smallsetminus 0)$
is Lagrangian
with respect to the difference $\sigma_X-\sigma_Y$ of the
symplectic forms of $T^\ast(X)$ and $T^\ast(Y)$ lifted to
$T^\ast(X)\times T^\ast(Y)=T^\ast(X\times Y)$. This differs in sign
from the symplectic form $\sigma_X+\sigma_Y$ of $T^\ast(X\times Y)$
so it is the \emph{twisted graph}
\[
C_{\mathcal{H}}'=\{(x,\xi,y,-\eta): (x,\xi,y,\eta)\in C_{\mathcal{H}}\}
\]
which is Lagrangian with respect to the standard symplectic structure in
$T^\ast(X\times Y)$.
\begin{prop}\label{prop.26.4.4} Let $K\subset T^\ast(X)\smallsetminus 0$ and
$K'\subset T^\ast(Y)\smallsetminus 0$ be compactly based cones and let $\chi$ be a
homogeneous symplectomorphism from a conic neighborhood of $K'$ to one of $K$ such that
$\chi(K')=K$. Let $A\in I^{m'}(X\times Y, \varGamma')$ and $B\in I^{m''}(Y\times X,(\varGamma^{-1})')$
where $\varGamma$ is the graph of $\chi$, and assume that $A$ and $B$ are properly supported and
non-characteristic at the restriction of the graphs of $\chi$ and $\chi^{-1}$ to $K'$ and to $K$ respectively,
while $W\! F'(A)$ and $W\! F'(B)$ are contained in small conic neighborhoods. Then the range of the pseudo-differential
operator $Q$ in $X$ is microlocally contained in the range of the pseudo-differential
operator $P$ in $X$ at $K$ if and only if the range of the pseudo-differential
operator $BQA$ in $Y$ is microlocally contained in the range of the pseudo-differential
operator $BPA$ in $Y$ at $K'$.
\end{prop}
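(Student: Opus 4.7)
The plan is to prove only the forward implication; the reverse will follow by applying that implication again with microlocal parametrices $A^{-1}, B^{-1}$ in place of $A, B$ and with $\chi^{-1}$ in place of $\chi$.

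For the forward implication, let $N$ be the integer furnished by Definition \ref{defrange} for $(P,Q)$ at $K$, and fix any integer $N' \geq N+m'$. Given $f \in H^{\mathrm{loc}}_{(N')}(Y)$, the Sobolev continuity of the FIO $A$ (order $m'$) gives $Af \in H^{\mathrm{loc}}_{(N'-m')}(X) \subset H^{\mathrm{loc}}_{(N)}(X)$, so by hypothesis there exists $u_X \in \mathscr{D}'(X)$ with $W\! F(Pu_X - QAf)\cap K = \emptyset$. I construct a properly supported microlocal parametrix $A^{-1} \in I^{-m'}(Y\times X,(\varGamma^{-1})')$ satisfying $AA^{-1} = I + R_A$ with $R_A$ a properly supported pseudo-differential operator such that $W\! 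F(R_A)\cap K = \emptyset$, and set $u_Y = A^{-1}u_X \in \mathscr{D}'(Y)$. Then
\[
BPAu_Y - BQAf \;=\; B(Pu_X - QAf) \;+\; BP R_A u_X,
\]
and both summands have wave front set disjoint from $K'$: the first because $B$ is an FIO associated to $\chi^{-1}$, giving $W\! F(Bv)\cap K' \subset \chi^{-1}(W\! F(v)\cap K)$; the second because $R_A u_X$ has no wave front set over $K$, and this property is preserved by the microlocal PDO $P$ and then transported to $K'$ by $B$.

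For the converse, I apply the forward implication just proved to the pair $(BQA, BPA)$ on $Y$, using the canonical transformation $\chi^{-1}$ from a conic neighborhood of $K$ onto one of $K'$, and taking as conjugating FIOs the microlocal parametrices $B^{-1} \in I^{-m''}(X\times Y,\varGamma')$ and $A^{-1} \in I^{-m'}(Y\times X,(\varGamma^{-1})')$; these belong to the right FIO classes, are properly supported, and are non-characteristic at the restrictions of the graphs of $\chi^{-1}$ and $\chi$ to $K$ and $K'$ respectively, so the hypotheses of the proposition are fulfilled in the new instance. The conclusion gives that the range of $B^{-1}(BQA)A^{-1}$ is microlocally contained in the range of $B^{-1}(BPA)A^{-1}$ at $K$. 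Now the parametrix identities $B^{-1}B = I + S_1$ and $AA^{-1} = I + S_2$, with $S_1, S_2$ properly supported PDOs whose wave front sets miss $K$, yield
\[
B^{-1}BQAA^{-1} = Q + R_Q, \qquad B^{-1}BPAA^{-1} = P + R_P,
\]
where $R_P$ and $R_Q$ are properly supported pseudo-differential operators with $W\! F(R_P)\cap K = \emptyset$ and $W\! F(R_Q)\cap K = \emptyset$. Since the extra terms $R_P u$ and $R_Q f$ then contribute no wave front set over $K$ for any choice of $u$ and $f$, the conclusion $W\! F((P+R_P)u - (Q+R_Q)f)\cap K = \emptyset$ implies $W\! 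F(Pu - Qf)\cap K = \emptyset$, which is what we want (after a harmless adjustment of the Sobolev index).

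The main technical point — and essentially the only nontrivial input beyond standard wave front calculus — is verifying that the microlocal parametrices $A^{-1}, B^{-1}$ can be chosen properly supported, with $W\! F'$ confined to small conic neighborhoods of the graphs of $\chi^{-1}$ and $\chi$, and non-characteristic at the appropriate restricted sets, so that the composition identities above hold modulo PDOs whose wave front sets avoid $K$ (and $K'$, as needed for the forward step). This is a standard consequence of the FIO calculus, after which the argument reduces to routine applications of the microlocality of properly supported PDOs and the transport of wave front sets under FIOs associated to $\chi$ and $\chi^{-1}$.
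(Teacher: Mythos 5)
Your argument is correct and follows essentially the same route as the paper's proof: you construct right/left microlocal parametrices of $A$ and $B$ (the paper's $B_1$ and $A_1$), use them to pass between $u_X$ and $u_Y$ in the forward direction, and then prove the converse by invoking the forward implication with the parametrices as the new conjugating Fourier integral operators and $\chi^{-1}$ as the new symplectomorphism. The only cosmetic difference is that the paper packages the parametrix relations $AB_1\equiv I$ and $A_1B\equiv I$ near $K$ (and near $K'$) up front, whereas you introduce $A^{-1}$ and $B^{-1}$ with explicit error terms $R_A$, $S_1$, $S_2$ and discharge them via $W\!F(R_Pu)\subset W\!F(R_P)$; the substance is identical.
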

\begin{proof}
Choose $A_1\in I^{-m''}(X\times Y, \varGamma')$ and $B_1\in I^{-m'}(Y\times X, (\varGamma^{-1})')$
properly supported such that
\begin{align*}
K'\cap W\! F(BA_1-I)& = \emptyset,  & K  \cap W\! F(A_1B-I) = \emptyset, \\
K'\cap W\! F(B_1A-I)& = \emptyset,  & K  \cap W\! F(AB_1-I) = \emptyset.
\end{align*}
Assume that the range of $Q$ is microlocally contained in the range of $P$ at $K$ and
choose $N$ as in Definition \ref{defrange}. Let $g\in H_{(N+m')}^{\mathrm{loc}}(Y)$
and set $f=Ag\in H_{(N)}^{\mathrm{loc}}(X)$. Then we can find $u\in \mathscr{D}'(X)$
such that $K\cap W\! F(Pu-Qf)=\emptyset$. Let $v=B_1u\in \mathscr{D}'(Y)$. Then
\[ W\! F(Av-u)=W\! F( (AB_1-I)u)
\]
does not meet $K$, so $K\cap W\! F(PAv-Qf)=\emptyset$. Recalling that $f=Ag$ this implies
\[K'\cap W\! F(BPAv-BQAg)=\emptyset,
\]
so the range of $BQA$ is microlocally contained in the range of $BPA$ at $K'$. Conversely,
if the range of $BQA$ is microlocally contained in the range of $BPA$ at $K'$ it follows
that the range of $A_1BQAB_1$ is microlocally contained in the range of $A_1BPAB_1$ at $K$.
Since
\[K\cap W\! F(A_1BPAB_1u-A_1BQAB_1f)=K\cap W\! F(Pu-Qf)
\]
this means that the range of $Q$ is microlocally contained in the range of $P$ at $K$, which proves the
proposition.
\end{proof}

Before we can state our main theorem, we need to 
study the geometric situation that occurs when $p$ fails to satisfy condition $(\varPsi)$.
Recall that by ~\cite[Theorem $26.4.12$]{ho4}
we may always assume that the nonvanishing factor in condition \eqref{intropsi}
is a homogeneous function.
We begin with a lemma concerning a reduction of the general case.
\begin{lem}\label{lemexiststrongly}
Let $p$ and $q$ be homogeneous smooth functions on $T^\ast(X)\smallsetminus 0$, and let
$t\mapsto\gamma(t)$, $a\le t\le b$,
be a bicharacteristic interval
of $\re qp$ such that $q(\gamma(t))\ne 0$ for $a\le t\le b$. If
\begin{equation}\label{notintropsi}
\im qp(\gamma(a))<0<\im qp(\gamma(b)),
\end{equation}
then there exists a proper subinterval $[a',b']\subset [a,b]$,
possibly reduced to a point, such that
\begin{itemize}
\item[i)] $\im qp (\gamma(t))= 0$ for $a'\le t\le b'$,
\item[ii)] for every $\varepsilon > 0$
there exist $a' - {\varepsilon} < s_- < a'$ and $b' < s_+ < b' + {\varepsilon}$ such that
$\im qp(\gamma(s_-)) < 0 < \im qp(\gamma(s_+))$.
\end{itemize}
\end{lem}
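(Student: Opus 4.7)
The plan is to reduce this to a one-dimensional real-variable statement about the continuous function
\[
f(t) := \im qp(\gamma(t)), \qquad t \in [a,b],
\]
which by hypothesis \eqref{notintropsi} satisfies $f(a) < 0 < f(b)$. The subinterval $[a', b']$ is carved out of the zero set of $f$ by letting its endpoints be one-sided extrema of the open subsets of $[a,b]$ where $f$ has a definite sign.

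Concretely, I would set
\[
b' = \inf \{ t \in [a,b] : f(t) > 0 \}, \qquad a' = \sup \{ t \in [a, b'] : f(t) < 0 \}.
\]
Both defining sets are nonempty (they contain $b$ and $a$ respectively), and a routine continuity argument gives $f(a') = f(b') = 0$: otherwise an entire one-sided neighborhood of the extremum would share the sign of $f$ there, forcing the extremum to shift. The strict sign hypotheses at $a$ and $b$ then yield $a < a' \le b' < b$, so $[a', b']$ is a proper subinterval of $[a,b]$, possibly degenerate to a single point.

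Property i) follows immediately on $(a', b')$: by the defining property of $b'$ we have $f(t) \le 0$ for $t < b'$, and by the defining property of $a'$ we have $f(t) \ge 0$ for $a' < t \le b'$, so $f$ vanishes identically on $[a', b']$. For property ii), the fact that $f(b') = 0$ places $b'$ outside the set $\{f > 0\}$, so points of that set must accumulate at $b'$ from the right, producing the required $s_+$; the symmetric argument at $a'$ produces $s_-$ from the left.

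There is no substantive obstacle here; the only subtlety worth flagging is that one must restrict the supremum defining $a'$ to $[a, b']$ rather than to all of $[a, b]$, so that $a' \le b'$ is automatic and the sign analysis on $(a', b')$ remains consistent even if $f$ changes sign repeatedly in $(b', b]$.
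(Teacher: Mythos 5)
Your proof is correct and is genuinely different from, and more elementary than, the paper's. The paper first reduces (via a canonical transformation) to the normal form $q=1$, $\re p=\xi_1$ on $\mathbb{R}^n$, and then defines $[a',b']$ as the limit of a minimizing sequence for the infimum
\[
L = \inf\{\,t-s : a<s<t<b,\ \im p(\gamma(s))<0<\im p(\gamma(t))\,\},
\]
so that the interval it produces has $b'-a'=L$, i.e.\ it is \emph{length-minimal} among sign-changing intervals. Your construction $b'=\inf\{f>0\}$, $a'=\sup\{t\in[a,b']:f(t)<0\}$ bypasses all of this and works directly on the continuous function $f(t)=\im qp(\gamma(t))$, which is cleaner and requires no normal form. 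The trade-off is that the two constructions can produce different intervals: for instance if $f$ is negative on $[a,-1)$, zero on $[-1,1]$, positive on $(1,2)$, negative on $(2,3)$ and positive on $(3,b]$, your construction returns $[-1,1]$ while the paper's returns the degenerate interval $\{3\}$. Both satisfy i) and ii), so your proof establishes the lemma as stated. However, the paper's specific construction is what justifies the unlabelled remark iii) following the lemma (your interval need not satisfy it, e.g.\ when $f$ oscillates with both signs just to the right of $b'$), and more importantly the identity $|\varGamma_j|=\mathcal{L}_p(\gamma_j,w_j)$ that the paper invokes when it ``applies Lemma~\ref{lemexiststrongly}'' in the remark on page~\pageref{connectionntoL0} is a consequence of the minimizing construction in the paper's proof, not of the lemma's statement alone. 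So while your argument is a valid and shorter proof of the statement as written, replacing the paper's proof with yours would require a separate minimization step (or an appeal to Zorn-type extraction of a minimal sign-changing subinterval) to recover the $L$-minimality that is used downstream.
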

\noindent If $\gamma(t)$ is defined for $a\le t\le b$ we shall in the sequel say that $\im qp$
changes sign from $-$ to $+$ on $\gamma$ if \eqref{notintropsi} holds.
If $\gamma |_{[a',b']}$ is the restriction of $\gamma$ to $[a',b']$
and i) and ii) hold we shall say that $\im qp$ \emph{strongly}
changes sign from $-$ to $+$ on $\gamma |_{[a',b']}$.
\begin{proof}
It suffices to regard
the case $q=1$, $X=\mathbb{R}^n$, $p$ homogeneous of degree $1$ with $\re p=\xi_1$,
and the bicharacteristic of $\re p$ given by
\begin{equation}\label{eq:reducedbichars}
a\leq x_1 \leq b, \quad x'=(x_2, \ldots, x_n)=0, \quad \xi=\varepsilon_n.
\end{equation}
Here $\varepsilon_n=(0,\ldots,0,1)\in \mathbb{R}^n$,
and we shall
in what follows write $\xi^0$ in place of $\varepsilon_n'$.
The proof of this fact is taken from ~\cite[p. $97$]{ho4}
and is given here for the purpose of reference later, in particular
in connection with Definition \ref{dfn:minimal1} below.

Choose a pseudo-differential operator $Q$ with principal symbol $q$. If we let
$P_1=QP$, then the principal symbol of $P_1$ is $p_1=qp$ so $\im p_1$
changes sign from $-$ to $+$ on the bicharacteristic $\gamma$ of $\re p_1$. Now choose
$Q_1$ to be of order $1-\mathrm{degree}$ $P_1$ with positive, homogeneous principal symbol.
If $p_2$ is the principal symbol of $P_2=Q_1P_1$, it follows that $\re p_1$
and $\re p_2$ have the same bicharacteristics, including orientation,
and since $p_2$ is homogeneous of degree $1$ these can be considered to
be curves on the cosphere bundle $S^\ast(X)$. Moreover, $\im p_1$ and $\im p_2$ have the same
sign, so $\im p_2$ changes sign from $-$ to $+$ along $\gamma\subset S^\ast(X)$. If 
$\gamma$ is a closed curve on $S^\ast(X)$ we can pick an arc that is not closed where
the sign change still occurs. If we assume this to be done,
then ~\cite[Proposition $26.1.6$]{ho4} states that there exists a $C^\infty$
homogeneous canonical transformation $\chi$ from an open conic neighborhood of \eqref{eq:reducedbichars} to
one of $\gamma$ such that $\chi(x_1,0,\varepsilon_n)=\gamma(x_1)$ and $\chi^\ast(\re p_2)=\xi_1$.
Since the Hamilton field is symplectically invariant
it follows that the equations of a bicharacteristic are invariant
under the action of canonical transformations,
that is, $\tilde{\gamma}$ is a bicharacteristic of $\chi^\ast(\re p_2)$ if and only if $\chi(\tilde{\gamma})$
is a bicharacteristic of $\re p_2$. This proves the claim.

In accordance with the notation in ~\cite[p. $97$]{ho4}, let $(x',\xi')=(0,\xi^0)$ and consider
\begin{equation*}
L(0,\xi^0)=\inf\{t-s: a<s<t<b,\, \im p(s,0,\varepsilon_n)<0<\im p(t,0,\varepsilon_n)\}.
\end{equation*}
For every small $\delta>0$ there exist $s_\delta$ and $t_\delta$ such that
$a<s_\delta<t_\delta<b$, $\im p(s_\delta,0,\varepsilon_n)<0<\im p(t_\delta,0,\varepsilon_n)$
and $t_\delta-s_\delta<L(0,\xi^0)+\delta$. Choose a sequence $\delta_j\to 0$
such that the limits $a'=\lim s_{\delta_j}$ and $b'=\lim t_{\delta_j}$ exist.
Then $b'-a'=L(0,\xi^0)$ and in view of \eqref{notintropsi}
we have $a<a'\le b'<b$ by continuity. Moreover, $\im p(t,0,\varepsilon_n) = 0$
for $a'\le t\le b'$. This is clear if $a'=b'$. If on the other hand
$\im p(t,0,\varepsilon_n)$ is, say, strictly positive for some $a'<t<b'$, then
$L(0,\xi^0)\le t-s_{\delta_j}\rightarrow t-a'<b'-a'$, a contradiction.
Thus i) holds.

To prove ii), let $\varepsilon>0$.
After possibly reducing to a subsequence we may assume that
the sequences
$\{s_{\delta_j}\}$ and $\{t_{\delta_j}\}$ given above are monotone increasing
and decreasing, respectively. It then follows by i) that $s_{\delta_j}<a'\le b'<t_{\delta_j}$
for all $j$.
Since $s_{\delta_j}\to a'$ and $t_{\delta_j}\to b'$ we can choose $j$ so that
$a' - {\varepsilon} < s_{\delta_j} < a'$ and $b' < t_{\delta_j} < b' + {\varepsilon}$.
By construction we have $\im p(s_{\delta_j},0,\varepsilon_n)<0<\im p(t_{\delta_j},0,\varepsilon_n)$.
This completes the proof. 
\end{proof}

Although it will not be needed here, we note that if $[a',b']$ is the interval given by Lemma \ref{lemexiststrongly}
and $a'<b'$, then in addition to i) and ii) we also have
\begin{itemize}
\item[iii)] there exists a $\delta > 0$
such that $\im qp(\gamma(s)) \le 0 \le \im qp(\gamma(t))$
for all 
$a' - {\delta} < s < a'$ and $b' < t < b' + {\delta}$.
\end{itemize}
Indeed, the infimum $L(0,\xi^0)=b'-a'$ would otherwise satisfy
$L(0,\xi^0)<\delta$ for every $\delta$ in view of ii), which is a contradiction
when $a'<b'$.

We next recall the definition of a one dimensional bicharacteristic.
\begin{dfn}\label{def1dim}
A one dimensional bicharacteristic of the pseudo-differential operator with homogeneous principal symbol
$p$ is a $C^1$ map $\gamma: I \rightarrow T^\ast(X)\smallsetminus 0$ where $I$ is an interval on
$\mathbb{R}$, such that
\begin{itemize}
\item[(i)] $p(\gamma(t))=0$, $t\in I$,
\item[(ii)] $0\neq \gamma'(t)=c(t)H_p(\gamma(t)) \text{ if } t\in I$
\end{itemize}
for some continuous function $c: I\rightarrow\mathbb{C}$.
\end{dfn}

Let $P$ be an operator of principal type on a $C^\infty$ manifold $X$
with principal symbol $p$, and suppose $p$ fails to satisfy
condition $(\varPsi)$ in $X$. By \eqref{intropsi} there is a function $q$ in $C^\infty(T^\ast(X)\smallsetminus 0)$
such that $\im qp$ changes sign from $-$ to $+$ on a bicharacteristic $\gamma$ of $\re qp$ where $q\neq 0$.
As can be seen in ~\cite[pp. $96-97$]{ho4},
we can then find a compact one dimensional bicharacteristic interval $\varGamma \subset \gamma$ or a characteristic
point $\varGamma \in \gamma$ such that the sign change occurs on bicharacteristics of $\re qp$ arbitrarily
close to $\varGamma$.
What we mean by this
will be clear from the following discussion, although we will not use this terminology in
the sequel. By the proof of Lemma \ref{lemexiststrongly} it suffices to regard
the case $q=1$, $X=\mathbb{R}^n$, $p$ homogeneous of degree $1$ with $\re p=\xi_1$,
and the bicharacteristic of $\re p$ given by \eqref{eq:reducedbichars}.

We shall now study a slightly more general situation is some detail.
If $\gamma = I \times \{w_0\}$, $I= [a,b]$, we shall by $|\gamma|$
denote the usual arc length in $\mathbb{R}^{2n}$, so that $|\gamma|=b-a$.
Furthermore, we will assume that all curves are bicharacteristics of $\re p=\xi_1$, that
is, $w_0=(x',0,\xi')\in\mathbb{R}^{2n-1}$.
We owe parts of this exposition to Nils Dencker ~\cite{de1}.

\begin{lem}\label{lem:minimal01}
Assume that  $\im p$  strongly changes sign from $-$ to $+$ on 
$\gamma =[a,b] \times \{w_0\}$. Then
for any $\delta > 0$ there exist $\varepsilon > 0$, $a
-\delta < s_- < a $ and $b < s_+ < b + \delta$ so that
$\pm \im p(s_\pm,w) > 0$ for any  $|w-w_0| < \varepsilon$.
\end{lem}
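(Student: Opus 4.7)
The plan is to read the conclusion of Lemma \ref{lem:minimal01} as a continuity perturbation of property ii) in Lemma \ref{lemexiststrongly}. The hypothesis that $\im p$ \emph{strongly} changes sign from $-$ to $+$ on $\gamma=[a,b]\times\{w_0\}$ means, matching notation, that $[a,b]$ plays the role of the inner interval $[a',b']$ there. So in particular property ii) of that lemma directly gives, for any prescribed gap, points just before $a$ and just after $b$ at which $\im p(\cdot,w_0)$ is already strictly of the correct sign along the central curve.

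The key steps I would carry out are the following. First, given $\delta>0$, invoke property ii) of Lemma \ref{lemexiststrongly} with the parameter $\varepsilon$ there chosen equal to $\delta$. This produces points $s_- \in (a-\delta,a)$ and $s_+ \in (b,b+\delta)$ with
\[
\im p(s_-,w_0) < 0 < \im p(s_+,w_0).
\]
Second, use that $\im p$ is smooth on $T^\ast(X)\smallsetminus 0$, hence in particular continuous as a function of the transverse variable $w$ at each of the two fixed values $s_\pm$ of the first coordinate. The open sets
\[
U_- = \{ w : \im p(s_-,w) < 0 \}, \qquad U_+ = \{ w : \im p(s_+,w) > 0 \}
\]
both contain $w_0$, so $U_- \cap U_+$ is an open neighborhood of $w_0$ in $\mathbb{R}^{2n-1}$, and I can pick $\varepsilon>0$ so that the open ball of radius $\varepsilon$ around $w_0$ lies in this intersection. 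That choice of $\varepsilon$ gives the conclusion $\pm \im p(s_\pm,w) > 0$ for all $|w-w_0|<\varepsilon$.

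There is no real obstacle here: the whole content of the lemma is that a \emph{strict} sign condition at two isolated points of the central bicharacteristic propagates to a uniform neighborhood by continuity. The only point that requires a moment's care is the translation of notation between the two lemmas, namely that the strong-sign-change property supplies the strict inequalities on the curve itself; once that is observed, continuity of $\im p$ finishes the argument immediately.
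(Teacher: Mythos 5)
Your proof is correct and follows the same argument as the paper: extract $s_\pm$ from the definition of a strong sign change (property ii) of Lemma \ref{lemexiststrongly}) so that $\pm\im p(s_\pm,w_0)>0$, then invoke continuity of $\im p$ in $w$ at the two fixed points $s_\pm$ to obtain a uniform $\varepsilon$.
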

\begin{proof}
Since $t \mapsto \im p(t, w_0)$ strongly changes sign on $[a,b]$ we can find
$s_\pm$ satisfying the conditions so that  $\pm \im p(s_\pm,w_0) > 0$. By
continuity we can find $\varepsilon_\pm > 0 $ so that $\pm \im p(s_\pm,w)
> 0$ for any  $|w-w_0| < \varepsilon_\pm$. The lemma now follows if we take $\varepsilon =
\min(\varepsilon_-, \varepsilon_+)$.
\end{proof}

We shall employ the following notation.

\begin{dfn}
Let $\gamma = [a,b] \times \{w_0\}$, and let ${\gamma}_j = [a_j, b_j] \times \{w_j\}$. If
$\liminf_{j \to \infty} a_j \ge a$, $\limsup_{j \to \infty} b_j \le b$ and $\lim_{j\to\infty}
w_j = w_0$, then we shall write ${\gamma}_j \dashrightarrow {\gamma}$ as $j\to\infty$.
If in addition $\lim_{j \to \infty} a_j=a$ and $\lim_{j \to \infty} b_j=b$ then we
shall write $\gamma_j\to\gamma$ as $j\to\infty$.
\end{dfn}
\begin{dfn}\label{def:minimal02}
If $\gamma$ is a bicharacteristic of $\re p=\xi_1$ and there exists a sequence
$\{\gamma_j\}$ of bicharacteristics of $\re p$ such that $\im p$
strongly changes sign from $-$ to $+$ on $\gamma_j$
for all $j$
and $\gamma_j\dashrightarrow\gamma$ as $j\to\infty$, we set
\begin{equation}\label{newdefoflp}
L_p({\gamma}) = \inf_{\{\gamma_j\}}\{\liminf_{j\to\infty} |\gamma_j| : \gamma_j \dashrightarrow \gamma \,
\text{ as }j\to\infty\},
\end{equation}
where the infimum is taken over all such sequences.
We shall write $L_p({\gamma})\ge 0$ to signify the existence of such a sequence $\{\gamma_j\}$.
\end{dfn}
\begin{rmk}\label{connectionntoL0}
The definition of $L_p(\gamma)$ corresponds to what is denoted by $L_0$ in ~\cite[p. $97$]{ho4},
when $\gamma = [a,b] \times \{w_0\}$ is given by \eqref{eq:reducedbichars} and
\begin{equation}\label{pchangesongamma}
\im p(a,w_0)<0<\im p(b,w_0).
\end{equation}
To prove this claim, we begin by showing that $L_p(\gamma)\le L_0$,
after having properly defined $L_0$. 
To this end,
let $\tilde{\gamma}=[\tilde{a},\tilde{b}]\times\{\tilde{w}\}$ be a bicharacteristic
of $\re p$ such that $\im p$ changes sign on $\tilde{\gamma}$.
For $w$ close to $w_0$ we set
\[
\mathcal{L}_p(\tilde{\gamma},w)=\inf\{t-s: \tilde{a}<s<t<\tilde{b},\ \im p(\tilde{a},w)<0<\im p(\tilde{b},w)\}.
\]
(Using
the notation in ~\cite[p. $97$]{ho4} we would have $\mathcal{L}_p(\gamma,w)=L(x',\xi')$
if $w=(x',0,\xi')$.) Then
\[
L_0=\liminf_{w\to w_0}\mathcal{L}_p(\gamma,w).
\]
By an adaptation of the arguments in ~\cite[p. $97$]{ho4}
it follows from the definition of $L_0$
that we can find 
a sequence $\{\gamma_j\}$ of bicharacteristics of $\re p$ with
$\gamma_j=[a_j,b_j]\times \{w_j\}$ such that
\[
\im p (a_j,w_j)<0<\im p (b_j,w_j)\quad\text{for all }j,
\] 
where $\lim w_j=w_0$ and the limits $a_0=\lim a_j$ and $b_0=\lim b_j$
exist, belong to the interval $(a,b)$
and satisfy $b_0-a_0=L_0$. 
If we for each $j$ apply Lemma \ref{lemexiststrongly} to
$\gamma_j$ we obtain a sequence of bicharacteristics $\varGamma_j\subset\gamma_j$
of $\re p$ such that
$\im p$ strongly changes sign from $-$ to $+$ on $\varGamma_j$,
where $|\varGamma_j|=\mathcal{L}_p(\gamma_j,w_j)<|\gamma_j|$.
Clearly $\varGamma_j\dashrightarrow\gamma$ as $j\to\infty$.
Since $a<a_j\le b_j<b$ if $j$ is sufficiently large it
follows that for such $j$ we have $\mathcal{L}_p(\gamma,w_j)\le\mathcal{L}_p(\gamma_j,w_j)$
by definition.
This implies
\begin{equation}\label{existenceofGammaj}
\begin{aligned}
L_0&=\liminf_{w\to w_0}\mathcal{L}_p(\gamma,w)\le\liminf_{j\to\infty}\mathcal{L}_p(\gamma,w_j)\\
&\le\liminf_{j\to\infty}|\varGamma_j|
\le\limsup_{j\to\infty}|\varGamma_j|\le\lim_{j\to\infty}|\gamma_j|=L_0,
\end{aligned}
\end{equation}
so $|\varGamma_j|\to L_0$ as $j\to\infty$. Thus $L_p(\gamma)\le L_0$.

For the reversed inequality, suppose
$\{\tilde{\gamma}_j\}$
is any sequence satisfying the properties of Definition \ref{def:minimal02},
with $\tilde{\gamma}_j=[\tilde{a}_j,\tilde{b}_j]\times\{\tilde{w}_j\}$.
By assumption we have
$\im p(\tilde{a}_j,\tilde{w}_j)=\im p(\tilde{b}_j,\tilde{w}_j)=0$ for all $j$,
which together with \eqref{pchangesongamma} and a
continuity argument implies the existence of a positive integer $j_0$ such that
\begin{equation*}
a<\tilde{a}_j\le \tilde{b}_j<b\quad\text{for all }j\ge j_0.
\end{equation*}
If $\tilde{\gamma}_{j,\delta}=[\tilde{a}_j-\delta,\tilde{b}_j+\delta]\times\{\tilde{w}_j\}$,
this means that
for small $\delta>0$ and sufficiently large $j$
we have
\[
\mathcal{L}_p(\gamma,\tilde{w}_j)\le\mathcal{L}_p(\tilde{\gamma}_{j,\delta},\tilde{w}_j).
\]
Since $\im p$ strongly changes sign from $-$ to $+$ on $\tilde{\gamma}_j$,
the infimum in the right-hand side exists for every $\delta>0$, and
is bounded from above by $\tilde{b}_j-\tilde{a}_j+2\delta$.
Taking the limit as $\delta\to 0$ yields $\mathcal{L}_p(\gamma,\tilde{w}_j)
\le |\tilde{\gamma}_j|$. Since $\tilde{w}_j\to w_0$ as $j\to\infty$ the definition
of $L_0$ now gives
\begin{equation}\label{boundonlp}
L_0\le\liminf_{j\to\infty}\mathcal{L}_p(\gamma,\tilde{w}_j)\le\liminf_{j\to\infty}
|\tilde{\gamma}_j|,
\end{equation}
and since the sequence $\{\tilde{\gamma}_j\}$ was arbitrary, we obtain
$L_0\le L_p(\gamma)$ by Definition \ref{def:minimal02}.
This proves the claim.
\end{rmk}

When no confusion can occur we will
omit the dependence on $p$ in Definition \ref{def:minimal02}. 
We note that
if $L_p(\gamma)$ exists, then $L_p(\gamma)\le |\gamma|$ by definition. Also,
if $\im p$ strongly changes sign from $-$ to $+$ 
on ${\gamma}$ then Lemma \ref{lem:minimal01} implies that the conditions of
Definition \ref{def:minimal02} are satisfied.
This proves the first part of the following result.
\begin{cor}\label{cor:minimal03}
Let $\gamma=[a,b]\times\{w_0\}$ be a bicharacteristic of $\re p=\xi_1$.
If $\im p$ strongly changes sign from $-$ to $+$ on
${\gamma}$
then $0\le L_p({\gamma}) \le
|\gamma|$. Moreover, for every $\delta,\varepsilon>0$ there exists a bicharacteristic
$\tilde{\gamma}=\tilde{\gamma}_{\delta,\varepsilon}$ of $\re p$ with
\[
\tilde{\gamma}=[\tilde{a},\tilde{b}]\times\{\tilde{w}\},
\quad a-\varepsilon<\tilde{a}\le\tilde{b} < b+\varepsilon,
\quad |\tilde{w}-w_0|<\varepsilon,
\]
such that $\im p$ strongly changes sign from $-$ to $+$ on $\tilde{\gamma}$
and $|\tilde{\gamma}|<L_p(\gamma)+\delta$.
\end{cor}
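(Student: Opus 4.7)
Since the paragraph preceding the corollary already records that $L_p(\gamma) \ge 0$ (trivial from nonnegativity of arc length) and that $L_p(\gamma) \le |\gamma|$ (the constant sequence $\gamma_j = \gamma$ being admissible in \eqref{newdefoflp} by virtue of Lemma \ref{lem:minimal01}, which verifies the hypotheses of Definition \ref{def:minimal02}), the first assertion requires no further work, and I can focus exclusively on the second.

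My plan is to unwind the definition of the infimum in \eqref{newdefoflp}. Fix $\delta, \varepsilon > 0$. By \eqref{newdefoflp} I can select a sequence of bicharacteristics $\gamma_j = [a_j, b_j] \times \{w_j\}$ with $\gamma_j \dashrightarrow \gamma$ and with $\im p$ strongly changing sign from $-$ to $+$ on each $\gamma_j$, subject to
\[
\liminf_{j\to\infty} |\gamma_j| < L_p(\gamma) + \delta/2.
\]
Passing to a subsequence along which $|\gamma_j|$ converges to this $\liminf$, I may assume $|\gamma_j| < L_p(\gamma) + \delta$ for all sufficiently large $j$.

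It remains to exploit the convergence $\gamma_j \dashrightarrow \gamma$. The defining conditions $\liminf_j a_j \ge a$, $\limsup_j b_j \le b$, and $w_j \to w_0$ ensure that for all sufficiently large $j$ one has simultaneously $a_j > a - \varepsilon$, $b_j < b + \varepsilon$, and $|w_j - w_0| < \varepsilon$. I would then pick any $j$ for which both the length bound and the three $\varepsilon$-inequalities hold and set $\tilde{\gamma} = \gamma_j$; this is the required bicharacteristic. I do not anticipate any substantive obstacle: the genuine content of the corollary is the fact that the infimum is meaningful under the hypothesis that $\im p$ strongly changes sign on $\gamma$ (supplied by Lemma \ref{lem:minimal01}), and the remainder of the argument is a routine $\varepsilon/\delta$ unwrapping of Definition \ref{def:minimal02} together with the notion $\dashrightarrow$.
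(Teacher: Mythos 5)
Your proposal is correct and follows essentially the same route as the paper: extract a sequence from the definition of $L_p(\gamma)$ as an infimum, pass to a subsequence to obtain the length bound for all large $j$, and then use the three conditions packaged in $\dashrightarrow$ to get the $\varepsilon$-inequalities and pick a suitable $j$. The paper works directly with a sequence satisfying $\liminf|\gamma_j|<L_p(\gamma)+\delta$ rather than $\delta/2$, but the subsequence step makes the two versions interchangeable.
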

\begin{proof}
The existence of the sequence $\{\varGamma_j\}$ in the preceding remark can 
after some adjustments be used
to prove the second part of Corollary \ref{cor:minimal03}, but we prefer the following direct proof.

Given $\delta>0$ we can by Definition \ref{def:minimal02}
find a sequence $\gamma_j=[a_j,b_j]\times\{w_j\}$ of bicharacteristics
of $\re p$ such that $\gamma_j\dashrightarrow\gamma$ as $j\to\infty$,
$\im p$ strongly changes sign from $-$ to $+$
on $\gamma_j$ and $\liminf_{j\to\infty}|\gamma_j|<L(\gamma)+\delta$.
After reducing to a subsequence we may assume
$|\gamma_j|<L(\gamma)+\delta$ for all $j$. We have $\liminf_{j\to\infty} a_j\ge a$ so for
every $\varepsilon$ there exists a $j_1(\varepsilon)$ such that
$a_j>a-\varepsilon$ for all $j\ge j_1$. Similarly
there exists a $j_2(\varepsilon)$ such that $b_j<b+\varepsilon$
for all $j\ge j_2$. Also, $w_j\to w_0$ as $j\to\infty$ so there exists a $j_3(\varepsilon)$
such that $|w_j-w_0|<\varepsilon$ for all $j\ge j_3$. Hence we can take $\tilde{\gamma}=\gamma_{j_0}$
where $j_{0}=\max (j_1,j_2,j_3)$.
\end{proof}

Consider now
the general case when
$\im qp$ changes sign from $-$ to $+$ on a bicharacteristic
$\gamma\subset T^\ast(X)\smallsetminus 0$
of $\re qp$ where $q\neq 0$,
that is, \eqref{notintropsi} holds. In view of
the proof of Lemma \ref{lemexiststrongly} 
we can by means of \eqref{newdefoflp} define a minimality property of a subset of the curve $\gamma$
in the following sense.

\begin{dfn}\label{dfn:minimal1}
Let $I\subset\mathbb{R}$ be a compact interval possibly reduced to a point and
let $\tilde{\gamma}:I\to T^\ast(X)\smallsetminus 0$
be a characteristic point or a compact one dimensional bicharacteristic interval
of the homogeneous function $p\in C^\infty(T^\ast(X)\smallsetminus 0)$.
Suppose that there exists a function $q\in C^\infty(T^\ast(X)\smallsetminus 0)$
and a $C^\infty$
homogeneous canonical transformation $\chi$ from an open conic neighborhood $V$ of
\[
\varGamma=\{(x_1,0,\varepsilon_n): x_1\in I\}
\subset T^\ast (\mathbb{R}^n)
\]
to an open conic neighborhood
$\chi(V)\subset T^\ast(X)\smallsetminus 0$
of $\tilde{\gamma}(I)$ such that
\begin{itemize}
\item[(i)] $\chi(x_1,0,\varepsilon_n)=\tilde{\gamma}(x_1)$ and $\re \chi^\ast(qp )=\xi_1$
in $V$,
\item[(ii)] $L_{\chi^\ast(qp)}(\varGamma)=|\varGamma|$.
\end{itemize}
Then we say that $\tilde{\gamma}(I)$ is a minimal characteristic point
or a minimal bicharacteristic interval
if $|I|=0$ or $|I|>0$,
respectively.
\end{dfn}

The definition of the arclength is of course dependent of the choice of Riemannian
metric on $T^\ast (\mathbb{R}^n)$. However, since we are only using the arclength
to compare curves where one is contained within the other and both are parametrizable through condition (i),
the results here and Definition
\ref{dfn:minimal1} in particular are independent of the chosen metric.
By choosing a Riemannian metric on $T^\ast (X)$, one
could therefore define the minimality property given by Definition
\ref{dfn:minimal1} through the corresponding arclength in $T^\ast (X)$
directly, although there, the notion of convergence of curves
is somewhat trickier. We shall not pursue this any further.

Note that condition (i) implies that
$q\ne 0$ and $\re H_{qp}\ne 0$ on $\tilde{\gamma}$,
and that by definition,
a minimal bicharacteristic interval
is a compact one dimensional bicharacteristic interval.
Moreover,
if $\im qp$ changes sign from $-$ to $+$ on a bicharacteristic
$\gamma\subset T^\ast(X)\smallsetminus 0$
of $\re qp$ where $q\neq 0$, then we can always find a minimal characteristic point $\tilde{\gamma}
\in\gamma$ or a minimal bicharacteristic interval
$\tilde{\gamma}\subset\gamma$.
In view of the proof of Lemma \ref{lemexiststrongly},
this follows from the conclusion of
the extensive remark beginning on page \pageref{connectionntoL0}
together with \eqref{existenceofGammaj}.
The following proposition shows that this continues to hold even when
the assumption \eqref{notintropsi} is relaxed
in the sense of Definition \ref{def:minimal02}.
We will state this result only in the (very weak) generality needed here.

\begin{prop}\label{prop:minimal03}
Let $\gamma=[a,b]\times \{w_0\}$ be a bicharacteristic of $\re p=\xi_1$,
and assume that $L(\gamma)\ge 0$.
Then there exists a minimal characteristic point $\varGamma\in\gamma$ of $p$ or
a minimal bicharacteristic interval $\varGamma\subset\gamma$ of $p$ of length $L(\gamma)$
if $L(\gamma)=0$ or $L(\gamma)>0$, respectively.
If $\varGamma=[a_0,b_0]\times \{w_0\}$ and $a_0<b_0$, that is, $L(\gamma)>0$, then
\begin{equation}\label{eq:minimal04}
\im p_{(\alpha)}^{(\beta)}(t,w_0)=0
\end{equation}
for all $\alpha, \beta$ with
$\beta_1=0$ if
$a_0\le t\le b_0$.
Conversely, if $\gamma$ is a minimal characteristic point or
a minimal bicharacteristic interval
then $L(\gamma)=|\gamma|$.
\end{prop}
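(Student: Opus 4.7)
My plan is to proceed in three stages: constructing the candidate minimal $\varGamma$, verifying the Taylor flatness \eqref{eq:minimal04}, and then observing the converse; I work throughout in the reduced setting of Proposition \ref{prop:minimal03} where $\re p=\xi_1$. For the first stage, since $L(\gamma)\ge 0$, Definition \ref{def:minimal02} yields a sequence $\gamma_j=[a_j,b_j]\times\{w_j\}$ on which $\im p$ strongly changes sign, with $\gamma_j\dashrightarrow\gamma$ and $\liminf|\gamma_j|=L(\gamma)$. Passing to a subsequence via compactness I may assume $|\gamma_j|\to L(\gamma)$, $a_j\to a_0$, $b_j\to b_0$ with $a\le a_0\le b_0\le b$ and $b_0-a_0=L(\gamma)$. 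Setting $\varGamma=[a_0,b_0]\times\{w_0\}$ (a point if $L(\gamma)=0$), condition (i) of Definition \ref{dfn:minimal1} is trivially fulfilled with $q=1$ and $\chi=\operatorname{id}$. To verify (ii), I prove both inequalities of $L_p(\varGamma)=|\varGamma|$: the upper bound uses the sequence $\{\gamma_j\}$, which also satisfies $\gamma_j\dashrightarrow\varGamma$; the lower bound follows since any sequence $\tilde\gamma_j\dashrightarrow\varGamma$ also satisfies $\tilde\gamma_j\dashrightarrow\gamma$ (as $a_0\ge a$ and $b_0\le b$), forcing $\liminf|\tilde\gamma_j|\ge L(\gamma)=|\varGamma|$ by \eqref{newdefoflp}.

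For the second stage I argue by induction on $m=|\alpha|+|\beta|$, considering only $\beta_1=0$. The base case reduces to $\im p(t,w_0)=0$ on $[a_0,b_0]$: property i) of Lemma \ref{lemexiststrongly} gives $\im p(\,\cdot\,,w_j)\equiv 0$ on $[a_j,b_j]$, and letting $j\to\infty$ via continuity yields the base case; derivatives in $x_1$ alone then vanish by differentiating. For the inductive step, suppose some mixed derivative of order $m$ in the $(x',\xi')$-directions is nonzero at $(t_*,w_0)$ with $t_*\in[a_0,b_0]$. Writing $f(x_1,h)=\im p(x_1,0,w_0+h)$ for $h$ small in the $(x',\xi')$-directions, Taylor's formula combined with the inductive hypothesis gives $f(x_1,h)=\sum_{|\nu|=m}h^\nu R_\nu(x_1,h)$ for $x_1\in[a_0,b_0]$, where $R_\nu(x_1,0)=\partial_w^\nu\im p(x_1,0,w_0)/\nu!$. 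Choosing a direction $h_0$ with $P_m(t_*,h_0):=\sum_{|\nu|=m}h_0^\nu R_\nu(t_*,0)\ne 0$, the function $x_1\mapsto f(x_1,sh_0)$ is strictly one-signed in a fixed neighborhood of $t_*$ for all small $s>0$. Combined with the inherited sign structure of $\im p$ outside $[a_0,b_0]$ from the original sequence, any strong sign-change interval for $\im p(\,\cdot\,,0,w_0+sh_0)$ produced via Lemma \ref{lemexiststrongly} must avoid a neighborhood of $t_*$ and hence have length strictly less than $L(\gamma)$. Taking $s_j\to 0^+$ produces a sequence $\tilde\gamma_j\dashrightarrow\gamma$ with $\liminf|\tilde\gamma_j|<L(\gamma)$, contradicting \eqref{newdefoflp}.

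The converse is immediate: if $\gamma$ is minimal, condition (ii) of Definition \ref{dfn:minimal1} applied with the trivial choice $\chi=\operatorname{id}$, $q=1$ reads exactly $L_p(\gamma)=|\gamma|$. I expect the main obstacle to lie in the inductive step of the second stage: the sketch above essentially treats $t_*\in(a_0,b_0)$, but when $t_*$ coincides with an endpoint of $[a_0,b_0]$ or when the parity of $m$ obstructs a sign-flip via $h_0\mapsto -h_0$, further case analysis is needed to ensure that the shortened sign-change interval is strictly shorter than $L(\gamma)$ and to track its position with respect to $[a_0,b_0]$ as $s\to 0^+$.
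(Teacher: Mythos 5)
Your overall mechanism (if some $(x',\xi')$-derivative of $\im p$ is nonzero, perturb in that direction to produce a nearby bicharacteristic with a strictly shorter sign-change interval, contradicting the definition of $L$) is the same one the paper uses, but you organize it differently: the paper works at $(t,w_\delta)$ on auxiliary curves $\gamma_\delta$ that carry a strong sign change, proves vanishing on $[a_\delta+\delta,b_\delta-\delta]$ in one step (no induction on the order), and then lets $\delta\to 0$; you work at $(t_\ast,w_0)$ directly and induct on the Taylor order. Working at $w_\delta$ is what lets the paper invoke Lemma~\ref{lem:minimal01} cleanly: $\gamma_\delta$ has the strong sign change, so $\pm\im p(s_\pm,w)>0$ holds for \emph{all} $w$ in a $\rho$-ball around $w_\delta$, and that is where the perturbed $w$ is chosen. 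Your direct route forgoes this convenience, which is where your gaps lie.

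There are two genuine gaps. First, in Stage~1 you assert that $\varGamma$ satisfies condition (i) of Definition~\ref{dfn:minimal1} ``trivially'' before Stage~2 has been carried out. But Definition~\ref{dfn:minimal1} is only applicable once $\varGamma$ is known to be a characteristic point or a compact one-dimensional bicharacteristic interval of $p$ in the sense of Definition~\ref{def1dim}; this requires $p\circ\varGamma=0$ and $\varGamma'(t)=c(t)H_p(\varGamma(t))$, which in turn require precisely the vanishing of $\im p$ and its $(x',\xi')$-derivatives along $\varGamma$ established in Stage~2 (cf.\ the paper's closing computation $H_p(\varGamma(t))=(1+i\partial\im p/\partial\xi_1)\varGamma'(t)$). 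You must defer the minimality assertion until after \eqref{eq:minimal04}. Second, in the inductive step you invoke an ``inherited sign structure of $\im p$ outside $[a_0,b_0]$'' to obtain a sign change of $\im p(\,\cdot\,,w_0+sh_0)$, but nothing guarantees that $\im p(\,\cdot\,,w_0)$ or its perturbations change sign at all; the hypothesis is merely $L(\gamma)\ge 0$. What is actually available is Lemma~\ref{lem:minimal01} applied to the $\gamma_j$'s: for fixed large $j$ one gets $s_{\pm}$ near $a_j,b_j$ and a radius $\rho_j>0$ with $\pm\im p(s_\pm,w)>0$ whenever $|w-w_j|<\rho_j$, and only for $s$ small enough (depending on $j$ and $\rho_j$) does $w_0+sh_0$ fall in that ball. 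This then has to be combined with your Taylor argument and a diagonal limit in $(j,\delta,s)$ to make the length estimate ``$<L(\gamma)$'' uniform enough to contradict \eqref{newdefoflp}. Your own closing remark flags this as the obstacle, and that self-assessment is accurate; by contrast the parity-of-$m$ worry is a red herring, since you only need $\im p(\,\cdot\,,w_0+sh_0)$ to be nonzero near $t_\ast$, not of a prescribed sign.
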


For the proof we shall need the following lemma.

\begin{lem}\label{prop:minimal05}
Let $\gamma$ and $\gamma_j$, $j\ge 1$, be bicharacteristics of $\re p=\xi_1$,
and assume that $\im p$ strongly changes sign from $-$ to $+$ on
${\gamma}_j$ for each $j$.
If ${\gamma}_j \dashrightarrow {\gamma}$ as $j\to\infty$ then $L({\gamma}) \le
\liminf_{j \to \infty} L({\gamma}_j)$.
\end{lem}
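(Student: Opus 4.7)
The plan is to produce a witnessing sequence for $L(\gamma)$ by a one-level approximation argument using Corollary \ref{cor:minimal03}. Since $\im p$ strongly changes sign from $-$ to $+$ on each $\gamma_j$, Corollary \ref{cor:minimal03} guarantees that, for any prescribed tolerances, we can find a nearby bicharacteristic with strong sign change whose length is arbitrarily close to $L(\gamma_j)$. If we can arrange those nearby curves to $\dashrightarrow \gamma$, Definition \ref{def:minimal02} applied to this single sequence yields the desired inequality.

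Concretely, write $\gamma = [a,b]\times\{w_0\}$ and $\gamma_j = [a_j, b_j]\times\{w_j\}$, and fix sequences $\varepsilon_j, \delta_j \downarrow 0$. Since $\im p$ strongly changes sign from $-$ to $+$ on $\gamma_j$, Corollary \ref{cor:minimal03} (applied to $\gamma_j$ with parameters $\delta_j, \varepsilon_j$) furnishes a bicharacteristic $\tilde{\gamma}_j = [\tilde{a}_j, \tilde{b}_j]\times\{\tilde{w}_j\}$ of $\re p$ with
\[
a_j - \varepsilon_j < \tilde{a}_j \le \tilde{b}_j < b_j + \varepsilon_j, \qquad |\tilde{w}_j - w_j| < \varepsilon_j,
\]
on which $\im p$ strongly changes sign from $-$ to $+$, and such that $|\tilde{\gamma}_j| < L(\gamma_j) + \delta_j$.

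Next I would verify $\tilde{\gamma}_j \dashrightarrow \gamma$. The hypothesis $\gamma_j \dashrightarrow \gamma$ gives $\liminf a_j \ge a$, $\limsup b_j \le b$ and $w_j \to w_0$; combining these with $\varepsilon_j \to 0$ and the estimates above yields $\liminf \tilde{a}_j \ge a$, $\limsup \tilde{b}_j \le b$, and $\tilde{w}_j \to w_0$, which is exactly the required notion of convergence. Hence the sequence $\{\tilde{\gamma}_j\}$ is admissible in Definition \ref{def:minimal02} and shows that $L(\gamma)$ is defined and
\[
L(\gamma) \le \liminf_{j\to\infty} |\tilde{\gamma}_j| \le \liminf_{j\to\infty} \bigl(L(\gamma_j) + \delta_j\bigr) = \liminf_{j\to\infty} L(\gamma_j),
\]
as desired.

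I do not anticipate any serious obstacle: the argument is bookkeeping around the $\dashrightarrow$ relation. The only point that requires a little care is that Corollary \ref{cor:minimal03} may produce approximating curves whose parameter interval slightly overshoots $[a_j, b_j]$, which is why I chose $\varepsilon_j \to 0$ in lockstep with $\delta_j \to 0$ so that the overshoot is absorbed when passing to the limit in $\liminf$ and $\limsup$.
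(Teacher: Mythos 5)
Your proof is correct and follows essentially the same route as the paper's: apply Corollary \ref{cor:minimal03} to each $\gamma_j$ with shrinking tolerances to obtain a sequence $\{\tilde\gamma_j\}$ that both $\dashrightarrow\gamma$ and witnesses the bound $L(\gamma)\le\liminf L(\gamma_j)$. The paper simply takes $\varepsilon_j=\delta_j=1/j$; your use of two independent null sequences is an inessential cosmetic difference.
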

\begin{proof}Let $\gamma_j=[a_j,b_j]\times \{w_j\}$ and $\gamma=[a,b]\times \{w_0\}$.
Since $\im p$ strongly changes sign from $-$ to $+$ on
${\gamma}_j$ we can by
Corollary \ref{cor:minimal03} for each $j$ find
a bicharacteristic
$\tilde{\gamma}_{j}=[\tilde{a}_{j},\tilde{b}_{j}]\times \{\tilde{w}_{j}\}$
of $\re p$ with
\[
a_j-1/j<\tilde{a}_j\le\tilde{b}_j < b_j+1/j,
\quad |\tilde{w}_j-w_j|<1/j,
\]
such that $\im p$ strongly changes sign from $-$ to $+$ on $\tilde{\gamma}_j$
and $|\tilde{\gamma}_j|<L(\gamma_j)+1/j$.
Now $|\tilde{w}_{j} - w_0| \le |\tilde{w}_{j} - w_j| + |w_j - w_0|$, and since 
$\liminf_{j \to
\infty} \tilde{a}_{j} \ge \liminf_{j \to \infty} (a_{j}-1/j) \ge a$ and
correspondingly for $\tilde{b}_{j}$,
we find
that $\tilde{\gamma}_{j} \dashrightarrow \gamma$ as $j\to\infty$.
Thus
\[
L({\gamma}) \le \liminf_{j \to \infty} |\tilde{\gamma}_{j}| \le
\liminf_{j \to \infty}( L({\gamma}_j) + 1/j)
\]
which completes the proof.
\end{proof}

\begin{proof}[Proof of Proposition \ref{prop:minimal03}]
We may without loss of generality assume that $w_0=(0,\varepsilon_n)\in\mathbb{R}^{2n-1}$.
The last statement is then an immediate consequence of Definition \ref{dfn:minimal1}.
To prove the theorem it then also suffices to show that we can find
a characteristic point $\varGamma\in\gamma$ of $p$, or a compact one dimensional bicharacteristic interval
$\varGamma\subset\gamma$ of $p$ of length $L(\gamma)$, with the property
that in any neighborhood of $\varGamma$ there is a bicharacteristic of $\re p$
where $\im p$ strongly changes sign from $-$ to $+$. 
This is done by adapting the arguments in ~\cite[p. $97$]{ho4},
which also yields \eqref{eq:minimal04}.

For small $\delta>0$ we can find $\varepsilon(\delta)$ with $0<\varepsilon<\delta$ such that
$L(\tilde{\gamma})>L(\gamma)-\delta/2$ for any bicharacteristic $\tilde{\gamma}=[\tilde{a},\tilde{b}]
\times \{\tilde{w}\}$ with $a-\varepsilon<\tilde{a}\le\tilde{b}< b+\varepsilon$
and $|\tilde{w}-w_0|<\varepsilon$
such that $\im p$ strongly changes sign from $-$ to $+$ on $\tilde{\gamma}$.
Indeed, otherwise there would exist a $\delta>0$ such that for each
(sufficiently large) $k$ we can find
a bicharacteristic $\gamma_k=[a_k,b_k]\times \{w_k\}$ with $a-1/k< a_k\le b_k <b+1/k$ and $|w_k-w_0|<1/k$
such that $\im p$ strongly changes sign from $-$ to $+$ on $\gamma_k$ and $L(\gamma_k)\le L(\gamma)-\delta/2$.
This implies that $\gamma_k\dashrightarrow\gamma$ as $k\to\infty$, so by Lemma
\ref{prop:minimal05} we obtain
\[
L(\gamma)\le \liminf_{k\to\infty} L(\gamma_k)\le L(\gamma)-\delta/2,
\]
a contradiction.
Since $L(\gamma)\ge 0$ we have
by Corollary \ref{cor:minimal03}
for some $|w_\delta-w_0|<\varepsilon$ and
$a-\varepsilon< a_\delta\le b_\delta< b+\varepsilon$ with $w_\delta=(x_\delta',0,\xi_\delta')$
that $\im p$ strongly changes sign from $-$ to $+$ on
the bicharacteristic
$\gamma_\delta=[a_\delta,b_\delta]\times \{w_\delta\}$, and
$|\gamma_\delta|<L(\gamma)+\delta/4$.
Thus,
\begin{equation}\label{eq:minimal04.5}
L(\gamma)-\delta /2 < |\gamma_\delta | < L(\gamma)+\delta /4.
\end{equation}
We claim that $\im p$ and
all derivatives with respect to $x'$ and $\xi'$ must vanish at
$(t,w_\delta)$ if $a_\delta+\delta<t<b_\delta-\delta$. Indeed, by Lemma
\ref{lem:minimal01} we can find a $\rho>0$, $a_\delta-\delta/4<s_-<a_\delta$ and
$b_\delta<s_+<b_\delta+\delta/4$ such that
\[
\im p(s_-,w)<0<\im p(s_+,w)\quad\text{for all }|w-w_\delta|<\rho.
\]
If $\im p$ and
all derivatives with respect to $x'$ and $\xi'$ do not vanish at
$(t,w_\delta)$ if $a_\delta+\delta<t<b_\delta-\delta$, then we can choose
$w=(x',0,\xi')$ so that $|w-w_\delta|<\rho$, $|w-w_0|<\varepsilon$
and $\im p(t,w)\neq 0$ for some $a_\delta+\delta<t<b_\delta-\delta$.
It follows that the required sign change
of $\im p(x_1,w)$
must occur
on one of the intervals $(s_-,t)$ and $(t,s_+)$, which are shorter than $L(\gamma)-\delta/2$.
This contradiction proves the claim.

Now choose a sequence $\delta_j\to 0$ as $j\to\infty$ such that
$\lim a_{\delta_j}$ and $\lim b_{\delta_j}$ exist. If we denote these limits
by $a_0$ and $b_0$, respectively, then $L(\gamma)=b_0-a_0$
by \eqref{eq:minimal04.5}, and \eqref{eq:minimal04} holds
if $a_0<b_0$. In particular, if $a_0<b_0$ then
\[
H_{p}(\gamma(t))=(1+i\partial \im p(\gamma(t))/\partial \xi_1)\gamma'(t), \quad a_0\le t \le b_0,
\]
so if $\varGamma=\{(t,w_0): t\in I\}$, $I=[a_0,b_0]$ then $\varGamma$
is a compact one dimensional bicharacteristic interval of $p$ with
the function $c$ in Definition \ref{def1dim} given by
\[
c(t)=(1+i\partial \im p(\varGamma(t))/\partial \xi_1)^{-1}.
\]
This completes the proof.
\end{proof}

Proposition \ref{prop:minimal03} allows us to make some additional comments
on the implications of Definition \ref{dfn:minimal1}.
With the notation in the definition, we note that
condition (ii)
implies that
there exists a sequence $\{\varGamma_j\}$
of bicharacteristics of $\re \chi^\ast (qp)$ on which $\im\chi^\ast (qp)$ strongly changes
sign from $-$ to $+$,
such that $\varGamma_j\to\varGamma$
as $j\to\infty$.
By our choice of terminology, the sequence $\{\varGamma_j\}$
may simply be a sequence of points when $L(\varGamma)=0$. Conversely, if
$\{\varGamma_j\}$
is a point sequence then $L(\varGamma)=0$.
Also note that if $\tilde{\gamma}(I)$ is minimal, and
condition (i) in Definition \ref{dfn:minimal1} is satisfied
for some other choice of maps $q',\chi'$, then condition (ii)
also
holds for $q',\chi'$;
in other words,
\[
L_{\chi^\ast(qp)}(\varGamma)=|\varGamma|=L_{(\chi')^\ast(q'p)}(\varGamma).
\]
This follows
by an application of Proposition \ref{prop:minimal03}
together with ~\cite[Lemma $26.4.10$]{ho4}.
It is then also clear that $\tilde{\gamma}(I)$ is a minimal characteristic point
or a minimal bicharacteristic interval of 
the homogeneous function $p\in C^\infty(T^\ast(X)\smallsetminus 0)$ if and only if
$\varGamma(I)$ is a minimal characteristic point
or a minimal bicharacteristic interval of 
$\chi^\ast(qp)\in C^\infty(T^\ast(\mathbb{R}^n)\smallsetminus 0)$
for any maps $q$ and $\chi$ satisfying condition (i) in Definition \ref{dfn:minimal1}.

The proof of ~\cite[Theorem $26.4.7$]{ho4}
stating that condition $(\varPsi)$ is necessary for local solvability
relies on the imaginary part of the principal symbol
satisfying \eqref{eq:minimal04}. By Proposition \ref{prop:minimal03}, it is clear that \eqref{eq:minimal04}
holds on a minimal bicharacteristic interval $\varGamma$
in the case $q=1$, $\re p=\xi_1$. However, we shall
require the fact that we can find bicharacteristics arbitrarily
close to $\varGamma$ for which the following stronger result is applicable, at least if $\im p$ does not depend on $\xi_1$
as is the case for the standard normal form. This will be made precise below.
\begin{prop}\label{prop:minimal04}
Let $p=\xi_1+i\im p$. Assume that  $\im p$ strongly changes sign  from $-$ to $+$ on $\gamma=[a,b]\times \{w\}$ and that
$L({\gamma}) \ge |\gamma| -{\varrho}$ for some
$0<\varrho<|\gamma|/2$.
If $\im p$ does not depend on $\xi_1$ then
for any ${\kappa} > \varrho$ we find that $\im p$ vanishes 
identically in a neighborhood of $I_{\kappa} \times \{w\}$, where $I_{\kappa} =
[a+ {\kappa}, b - {\kappa}]$.
\end{prop}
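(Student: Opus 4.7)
My plan is to argue by contradiction: suppose $\im p$ does not vanish identically in any neighborhood of $I_\kappa \times \{w\}$. Since $\im p$ is independent of $\xi_1$, failure of identical vanishing can be detected by sequences $t_j \in I_\kappa$ and $w_j \to w$ (in the slice $\xi_1 = 0$) with $\im p(t_j, w_j) \ne 0$. After extraction we may assume $t_j \to t_\ast \in [a+\kappa, b-\kappa]$ and that the sign of $\im p(t_j, w_j)$ is constant; treat the positive case first (the negative case being symmetric).

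Next I would invoke Lemma \ref{lem:minimal01} applied to $\gamma$: for every $j$ it yields $s_j^- \in (a - 1/j, a)$ and a radius $\eta_j > 0$ such that $\im p(s_j^-, w') < 0$ whenever $|w' - w| < \eta_j$. For $j$ large enough $|w_j - w| < \eta_j$, so $\im p(s_j^-, w_j) < 0 < \im p(t_j, w_j)$ and hence $\im p$ changes sign from $-$ to $+$ on the bicharacteristic $[s_j^-, t_j] \times \{w_j\}$. By Lemma \ref{lemexiststrongly} this contains a subinterval $[a_j', b_j'] \subset [s_j^-, t_j]$ on which the sign change is strong; call the corresponding bicharacteristic $\tilde\gamma_j = [a_j', b_j'] \times \{w_j\}$, with $|\tilde\gamma_j| \le t_j - s_j^-$.

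I would then verify $\tilde\gamma_j \dashrightarrow \gamma$: we have $w_j \to w$, $\liminf a_j' \ge \liminf s_j^- \ge a$, and $\limsup b_j' \le \limsup t_j = t_\ast \le b - \kappa < b$. Hence Definition \ref{def:minimal02} gives
\[
L(\gamma) \le \liminf_{j \to \infty} |\tilde\gamma_j| \le \lim_{j \to \infty}(t_j - s_j^-) = t_\ast - a \le |\gamma| - \kappa,
\]
while the hypothesis forces $L(\gamma) \ge |\gamma| - \varrho > |\gamma| - \kappa$, a contradiction. For the negative sign case one uses $s_j^+ \in (b, b + 1/j)$ with $\im p(s_j^+, w_j) > 0$, obtaining strong-sign-change bicharacteristics inside $[t_j, s_j^+] \times \{w_j\}$ of length tending to $b - t_\ast \le |\gamma| - \kappa$; the contradiction is identical.

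The only real obstacle is organizing the sequence $\tilde\gamma_j$ so that it genuinely satisfies the convergence relation $\dashrightarrow \gamma$ used to define $L(\gamma)$; the crucial leverage is that $t_\ast \in I_\kappa$ is uniformly bounded away from the endpoint $b$ by $\kappa$, which both keeps the right endpoints of $\tilde\gamma_j$ below $b$ and forces their total length to drop below $|\gamma| - \varrho$. Once this is in place, the $\xi_1$-independence of $\im p$ promotes vanishing on the $(x, \xi')$-slice to vanishing in a full $T^\ast(\mathbb{R}^n)$-neighborhood of $I_\kappa \times \{w\}$.
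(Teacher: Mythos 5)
Your argument is essentially the paper's proof: from the contradiction hypothesis produce a sequence $(t_j,w_j)$ accumulating on $I_\kappa\times\{w\}$ with $\im p(t_j,w_j)\ne 0$, pair it with the negative value near the endpoint $a$ supplied by Lemma \ref{lem:minimal01}, extract strongly sign-changing subintervals $\tilde\gamma_j$ via Lemma \ref{lemexiststrongly}, and note that $\tilde\gamma_j\dashrightarrow\gamma$ forces $L(\gamma)\le\liminf|\tilde\gamma_j|\le |\gamma|-\kappa<|\gamma|-\varrho$. The only cosmetic difference from the paper is how the quantitative bound is obtained: the paper fixes $\delta<(\kappa-\varrho)/3$ and shows $|I_j|<|\gamma|-\varrho-\delta$, whereas you send the left endpoint $s_j^-$ to $a$ and read off $t_\ast - a\le |\gamma|-\kappa$ in the limit.

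There is, however, an indexing slip in the middle step. You apply Lemma \ref{lem:minimal01} with $\delta=1/j$ and obtain, \emph{for each $j$}, a point $s_j^-\in(a-1/j,a)$ together with a radius $\eta_j>0$, and then write ``For $j$ large enough $|w_j-w|<\eta_j$.'' As stated this is unjustified: the radius $\eta_j$ produced by the lemma is not under your control, and it may well decay to $0$ faster than $|w_j-w|$. The cleanest repair is the paper's: apply Lemma \ref{lem:minimal01} once with a \emph{single} small $\delta>0$ (any $\delta<\kappa-\varrho$ suffices for your version of the estimate) to get a fixed pair $(s_-,\varepsilon)$; then for $j$ large one has $|w_j-w|<\varepsilon$, hence the sign change on $[s_-,t_j]\times\{w_j\}$, whose lengths tend to $t_\ast-s_-\le|\gamma|-\kappa+\delta<|\gamma|-\varrho$. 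Alternatively, one can diagonalize: for each $j$ pick $k(j)$ so large that $|w_{k(j)}-w|<\eta_j$ and run the argument along the subsequence $(t_{k(j)},w_{k(j)})$. Either fix leaves the rest of your argument — the verification of $\tilde\gamma_j\dashrightarrow\gamma$ and the final contradiction with $L(\gamma)\ge|\gamma|-\varrho$ — intact. One further small point: non-vanishing in a \emph{neighborhood} of $I_\kappa\times\{w\}$ only gives $t_j$ converging to a point of $I_\kappa$, not $t_j\in I_\kappa$ itself; but since you pass to the limit $t_\ast\in I_\kappa$ anyway, this does not affect the conclusion.
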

\noindent The statement would of course be void if the hypotheses hold only for
$\varrho\ge |\gamma|/2$, for then $I_\kappa=\emptyset$.
\begin{proof}
If the statement is false, there exists a ${\kappa} > 0$ so that $\im p \not\equiv 0$
near  $I_{\kappa} \times \{w\}$. Thus there exists a sequence $(s_j,w_j)
\dashrightarrow I_{\kappa} \times \{w\}$ such that $\im p(s_j,w_j) \ne 0$ for all $j$. Since 
$\im p$ does not depend on $\xi_1$ we can choose $w_j$ to have $\xi_1$ coordinate equal to zero
for all $j$,
so that $(s_j,w_j)$ is contained in a bicharacteristic of $\re p$.
We may choose a
subsequence so that for some $ s \in I_{\kappa}$ we have $|s_j-s| \to 0$ and $|w_j- w| \to 0$
monotonically,  and either $\im p(s_j,w_j) > 0$ or $-\im p(s_j,w_j) > 0$ for all
$j$. We shall consider the case
with positive sign, the negative case works similarly.

Choose $\delta < ({\kappa} - \varrho)/3$ and use Lemma \ref{lem:minimal01}. We find that there
exists $a -\delta < s_- < a $ and $\varepsilon > 0$ such that  
$\im p(s_-,v) < 0$ for any  $|v-w| < \varepsilon$. Choose $k > 0$ so that
$|s_j- s| < \delta$ and $|w_j-w| < \varepsilon$ when $j > k$. Then $t
\mapsto \im p(t,w_j)$
changes sign from $-$ to $+$ on $I_j = [s_-, s_j]$, which has length
\[
|I_j| = s_j - s_- \le |s_j -s| + s -a + a -s_-  < |\gamma| - {\kappa} + 2 \delta
< |\gamma| - \varrho - {\delta}.
\]
If we for each $j$ apply Lemma \ref{lemexiststrongly}
to $I_j\times\{w_j\}$ and let $j \to \infty$ we obtain a contradiction
to the hypothesis $L(\gamma)\ge |\gamma|-\varrho$.
\end{proof}

Note that one could state Proposition \ref{prop:minimal04} without
the condition that the imaginary part is independent of $\xi_1$. The
invariant statement would then be that the restriction of the imaginary part
to the characteristic set of the real part vanishes in a neighborhood of $\gamma$.

The fact that Proposition \ref{prop:minimal04}
assumes that $\im p$ strongly changes sign from $-$ to $+$ on $\gamma$
means that the conditions are not in general satisfied 
when $\gamma$ is a minimal bicharacteristic interval.
As mentioned above, we will instead show that
arbitrarily close to a minimal bicharacteristic interval one can always find
bicharacteristics 
for which Proposition 
\ref{prop:minimal04} is applicable.
Before we state the results we introduce a helpful
definition
together with
some (perhaps contrived but illustrative) examples.
\begin{dfn}\label{dfn:minimal2}
A minimal bicharacteristic
interval $\varGamma=[a_0,b_0]\times \{w_0\}\subset T^\ast(\mathbb{R}^n)\smallsetminus 0$
of the homogeneous function $p=\xi_1+i\im p$ of degree $1$
is said to be $\varrho$-minimal if there exists a
$\varrho\ge 0$ such that $\im p$ vanishes in a neighborhood of
$[a_0+\kappa,b_0-\kappa]\times \{w_0\}$
for any $\kappa>\varrho$.
\end{dfn}
By a $0$-minimal bicharacteristic interval $\varGamma$ we thus mean a minimal bicharacteristic interval such
that the imaginary part vanishes in a neighborhood of any proper closed subset of $\varGamma$. Note that
this does not hold for minimal bicharacteristic intervals in general.
\begin{ex}\label{ex:graph1}
Let $f\in C^\infty(\mathbb{R})$ be given by
\begin{equation}\label{eq:graph1}
f(t)=\left\{
\begin{array}{ll}
-e^{-1/t^2} & \textrm{if $t<0$}\\
0 & \textrm{if $0\le t\le 2$}\\
e^{-1/(t-2)^2} & \textrm{if $t>2$}
\end{array}
\right.
\end{equation}
and let $\phi\in C^\infty(\mathbb{R})$ be a smooth cutoff function with $\supp\phi =[0,2]$ such
that $\phi>0$ on $(0,2)$.
If $\xi=(\xi_1,\xi')$ then
\[
p_1(x,\xi)=\xi_1+i|\xi'|(f(x_1)+x_2\phi(x_1))
\]
is homogeneous of degree $1$.
If we write $x=(x_1,x_2,x'')$ then
for any fixed $(x'',\xi')\in\mathbb{R}^{n-2}\times\mathbb{R}^{n-1}$
with $\xi'\ne 0$
we find that
$\{(x_1,x_2,x'',0,\xi') : x_1 = a , x_2=c\}$ is a minimal characteristic point of $p_1$ if
$c\ge 0$ and $a=0$ or if $c\le 0$ and $a=2$.
Note that if $\xi'\ne 0$ then
$\im p_1$ changes sign from $-$ to $+$ on the bicharacteristic
$\gamma(x_1)=\{(x_1,0,x'',0,\xi')\}$
of $\re p_1$, but that none of the points
$\{\gamma(x_1): 0<x_1<2\}$
are minimal characteristic points.\footnote{If the factor
$x_2$ in $\im p_1$ is raised to the power 3 for example,
then it turns out that $\{\gamma(x_1): 0<x_1<2\}$ is
a one dimensional bicharacteristic interval of $p_1$, and
not only a bicharacteristic of the real part. It is obviously not
minimal though, nor
does it contain any minimal characteristic points.}
On the other hand, if
$f$ is given by \eqref{eq:graph1} let
\begin{equation*}
h(x,\xi')=\left\{
\begin{array}{ll}
|\xi'|f(x_1-1)e^{1/x_2} & \textrm{if $x_2<0$}\\
0 & \textrm{if $x_2=0$}\\
|\xi'|f(x_1)e^{-1/x_2} & \textrm{if $x_2>0$}
\end{array}
\right.
\end{equation*}
be the imaginary part of $p_2(x,\xi)$. If $\re p_2=\xi_1$ then
$p_2$ is homogeneous of degree $1$ and
\[
\varGamma_c=\{(x_1,x_2,x'',0,\xi'): x_2=c,x_1\in I_c\}
\]
is a minimal bicharacteristic interval of $p_2$ for any $(x'',\xi')\in\mathbb{R}^{n-2}\times\mathbb{R}^{n-1}$
with $\xi'\ne 0$ if $c\ge 0$ and $I_c=[0,2]$ or if $c\le 0$ and $I_c=[1,3]$.
Moreover, if $c\lessgtr 0$ then $\varGamma_c$ is a $0$-minimal
bicharacteristic interval. However, there is no $\varrho>0$ such that
the minimal bicharacteristic interval
$\varGamma=\{(x_1,0,x'',0,\xi'): 0\le x_1\le 2\}$
is $\varrho$-minimal. The same holds for the minimal bicharacteristic
interval $\tilde{\varGamma}=\{(x_1,0,x'',0,\xi'): 1\le x_1\le 3\}$.
Figure \ref{fig:nollställe1} shows a cross-section of the characteristic sets of
$\im p_1$ and $\im p_2$.
\end{ex}
\begin{figure}
\psfrag{q1}[][]{\small $x_1$}
\psfrag{q2}[][]{\small $x_2$}
\psfrag{q3}[][]{$-$}
\psfrag{q4}[][]{$+$}
\psfrag{q5}[][]{$0$}
\centering
\includegraphics[width=2.3in]{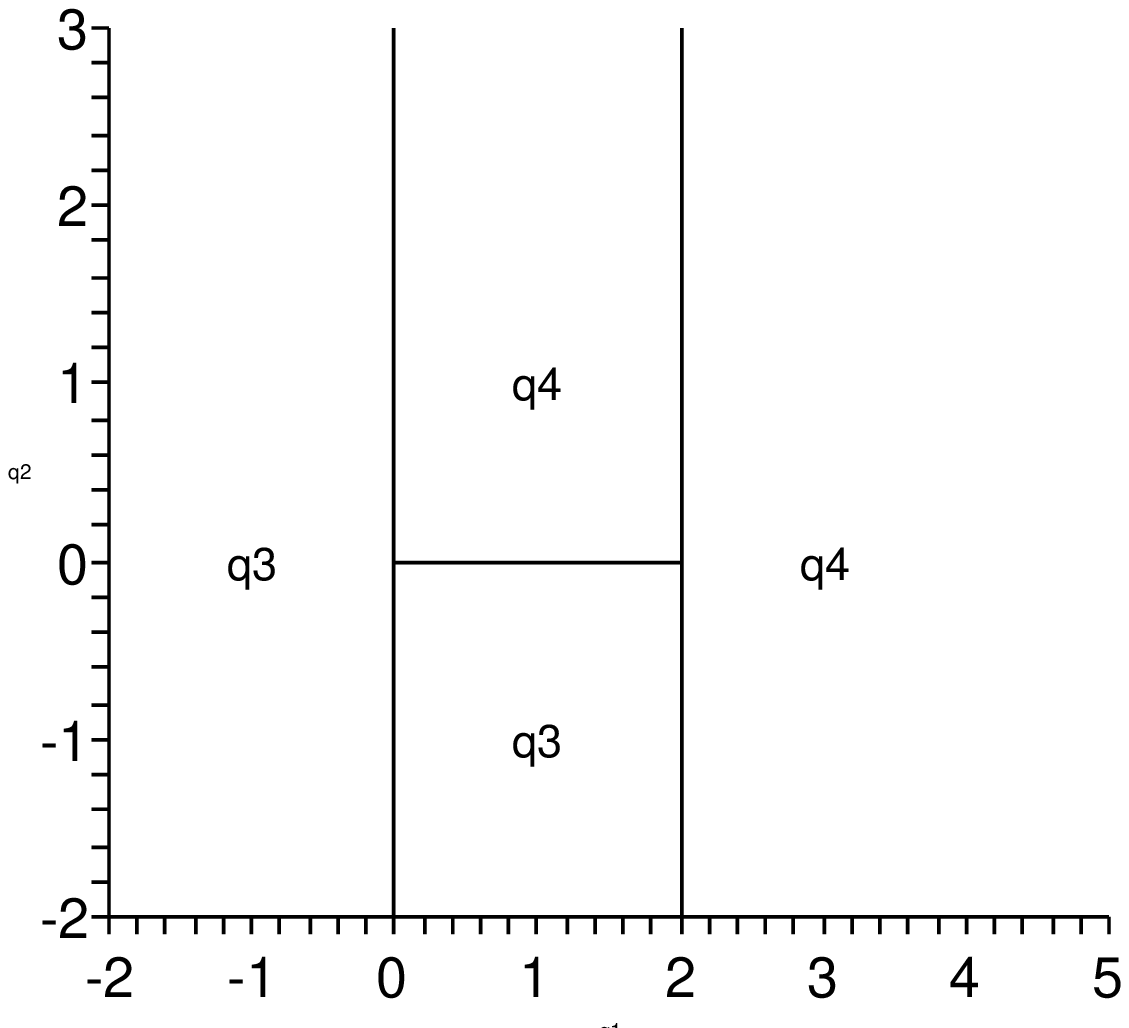}%
\hspace{0.2in}%
\includegraphics[width=2.3in]{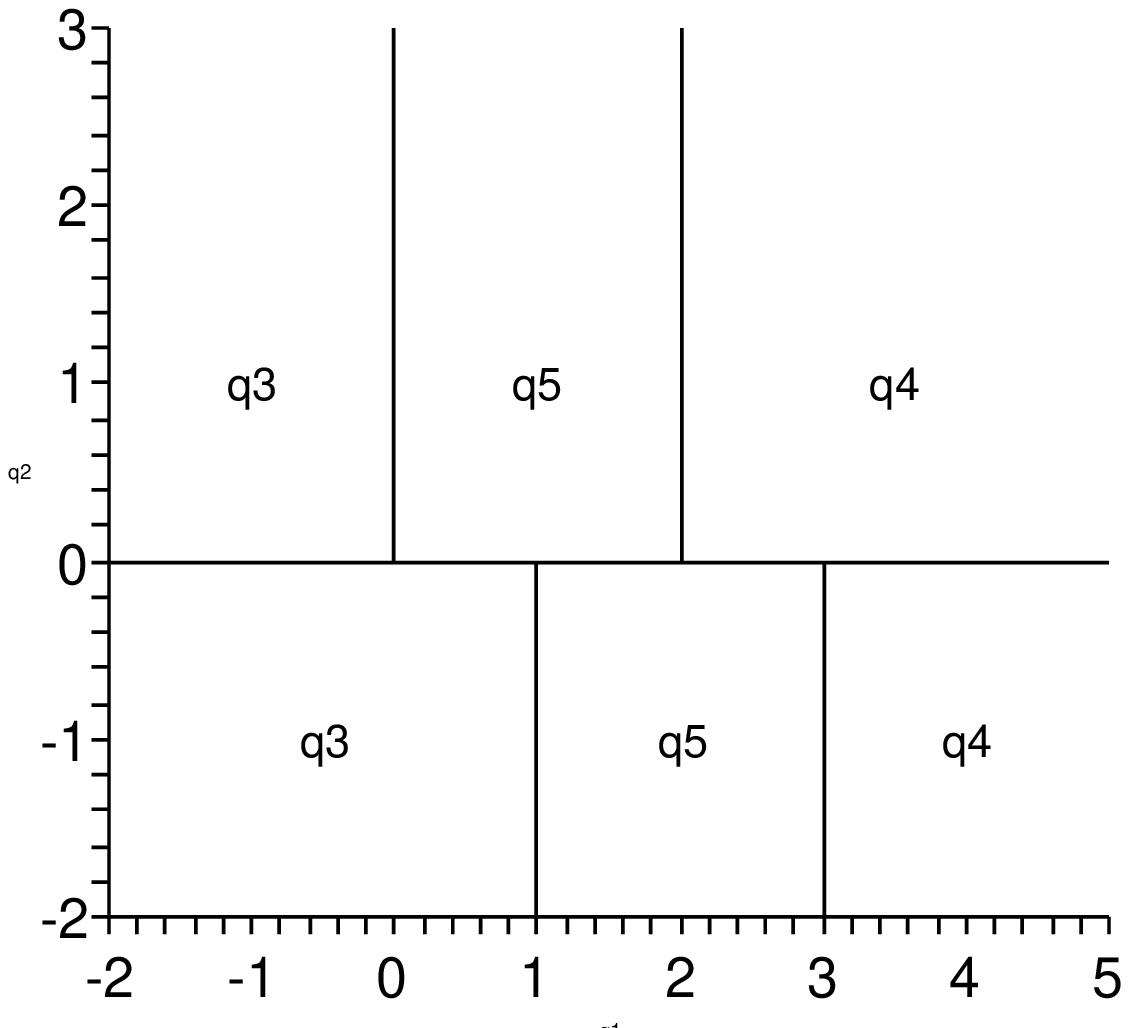}
\caption{Cross-sections of the characteristic sets of $\im p_1$ and $\im p_2$, respectively.}\label{fig:nollställe1}
\end{figure}
\begin{lem}\label{cor:minimal06}
Let $p=\xi_1+i\im p$, and assume that $L(\gamma)>0$ and that $\im p$ does not depend on $\xi_1$. Then
one can find $\widetilde {\gamma}_j \subset {\gamma}_j \dashrightarrow
{\gamma}$ such that
$|\widetilde{\gamma}_j| \to L({\gamma})$,  $\im p$ strongly changes sign  from $-$ to $+$ on
${\gamma_j}$ and $\im p$ vanishes in a neighborhood of
$\widetilde{\gamma}_j$.
\end{lem}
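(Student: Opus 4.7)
My plan is to combine Definition~\ref{def:minimal02}, Lemma~\ref{prop:minimal05} and Proposition~\ref{prop:minimal04} as follows. First, since $L(\gamma)>0$ is assumed to exist, Definition~\ref{def:minimal02} provides a sequence of bicharacteristics $\gamma_j=[a_j,b_j]\times\{w_j\}$ of $\re p=\xi_1$ with $\gamma_j\dashrightarrow\gamma$ on which $\im p$ strongly changes sign from $-$ to $+$ and with $\liminf_{j\to\infty}|\gamma_j|=L(\gamma)$. Passing to a subsequence, we may assume $|\gamma_j|\to L(\gamma)$.

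Next I would show that $|\gamma_j|-L(\gamma_j)\to 0$. Since $\im p$ strongly changes sign on each $\gamma_j$, Corollary~\ref{cor:minimal03} guarantees that $L(\gamma_j)$ exists and satisfies $L(\gamma_j)\le |\gamma_j|$. Combining this with Lemma~\ref{prop:minimal05}, which gives $L(\gamma)\le\liminf_{j\to\infty}L(\gamma_j)$, one obtains
\begin{equation*}
L(\gamma)\le \liminf_{j\to\infty}L(\gamma_j)\le \limsup_{j\to\infty}L(\gamma_j)\le \limsup_{j\to\infty}|\gamma_j|=L(\gamma),
\end{equation*}
so $L(\gamma_j)\to L(\gamma)$. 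Setting $\varrho_j=|\gamma_j|-L(\gamma_j)$, this yields $\varrho_j\to 0$. Since $L(\gamma)>0$ we have $|\gamma_j|\to L(\gamma)>0$, so $0\le \varrho_j<|\gamma_j|/2$ for all sufficiently large $j$.

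Now I would invoke Proposition~\ref{prop:minimal04} for each such $j$: since $\im p$ does not depend on $\xi_1$, strongly changes sign from $-$ to $+$ on $\gamma_j$, and $L(\gamma_j)\ge |\gamma_j|-\varrho_j$ with $\varrho_j<|\gamma_j|/2$, the proposition says that for any $\kappa>\varrho_j$, $\im p$ vanishes in a neighborhood of $[a_j+\kappa,b_j-\kappa]\times\{w_j\}$. Choose $\kappa_j=\varrho_j+1/j$, so $\kappa_j>\varrho_j$, $\kappa_j\to 0$, and set
\begin{equation*}
\widetilde{\gamma}_j=[a_j+\kappa_j,b_j-\kappa_j]\times\{w_j\}\subset\gamma_j.
\end{equation*}
Then $\im p$ vanishes in a neighborhood of $\widetilde{\gamma}_j$, and
\begin{equation*}
|\widetilde{\gamma}_j|=|\gamma_j|-2\kappa_j\longrightarrow L(\gamma),
\end{equation*}
which establishes the conclusion. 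The only mildly delicate step is the sandwich argument producing $L(\gamma_j)\to L(\gamma)$, which is what allows us to feed a suitable $\varrho_j$ into Proposition~\ref{prop:minimal04}; everything else is bookkeeping of subsequences and of the relations $\gamma_j\dashrightarrow\gamma$ and $\widetilde{\gamma}_j\subset\gamma_j$.
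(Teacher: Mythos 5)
Your proof is correct and follows essentially the same route as the paper's: extract $\gamma_j\dashrightarrow\gamma$ with $|\gamma_j|\to L(\gamma)$ via Corollary~\ref{cor:minimal03}, sandwich $L(\gamma_j)\to L(\gamma)$ using Lemma~\ref{prop:minimal05}, and then invoke Proposition~\ref{prop:minimal04} to strip off a shrinking margin from each $\gamma_j$. The only (cosmetic) difference is the bookkeeping: the paper fixes $\varepsilon<L(\gamma)/5$, takes $2\varepsilon$ as the margin, and sends $\varepsilon\to 0$ afterwards, whereas you track $\varrho_j=|\gamma_j|-L(\gamma_j)$ directly and use $\kappa_j=\varrho_j+1/j$; just note that when $\varrho_j=0$ one should feed any tiny positive $\varrho'<|\gamma_j|/2$ into Proposition~\ref{prop:minimal04} rather than $\varrho_j$ itself, since that proposition requires $\varrho>0$.
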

\noindent Note that the conditions imply that $\widetilde {\gamma}_j \dashrightarrow
{\gamma}$ as $j\to\infty$.
\begin{proof}
Choose ${\gamma}_{j} \dashrightarrow {\gamma}$ when
$j \to \infty$ as in the proof of Proposition
\ref{prop:minimal03},
so that  $\im p$ strongly changes sign from $-$ to $+$ on
${\gamma}_{j}$ and $L({\gamma}) = \lim_{j \to \infty}
|{\gamma}_{j}|$. By Lemma \ref{prop:minimal05} and Corollary
\ref{cor:minimal03} we have
\[
L({\gamma}) \le \liminf_{j \to \infty} L({\gamma}_j) \le \liminf_{j
\to \infty} |{\gamma}_j| = L({\gamma}).
\]
Thus we can for every ${\varepsilon} > 0$ choose $j$ so that
$|L({\gamma}) - |{\gamma}_j|| < {\varepsilon}$ and $|L({\gamma}_j) -
|{\gamma}_j|| < {\varepsilon}$.
If we choose $\varepsilon<L(\gamma)/5$ then
\[
2\varepsilon<(L(\gamma)-\varepsilon)/2<|\gamma_j|/2.
\]
Hence, if ${\gamma}_j = [a_j, b_j]\times
w_j$ then by using Proposition \ref{prop:minimal04} on ${\gamma}_j$
we find that $\im p$ vanishes identically in a neighborhood of $\widetilde {\gamma}_j = [a_j +
2{\varepsilon}, b_j -2{\varepsilon} ] \times \{w_j\}$.
Now choose a sequence $\varepsilon_k\to 0$ as $k\to\infty$.
Then $\widetilde{\gamma}_{j(k)} \subset {\gamma}_{j(k)}$
and assuming as we may that $j(k)>j(k')$ if $k>k'$
we obtain $|\widetilde {\gamma}_{j(k)}| \to L({\gamma})$
as $k\to\infty$, which completes the proof.
\end{proof}

If $\varGamma\subset\gamma$ is a minimal bicharacteristic interval
in $T^\ast(\mathbb{R}^n)\smallsetminus 0$ of the homogeneous function $p=\xi_1+i\im p$
of degree $1$, where the imaginary part is independent of $\xi_1$,
then by Definition \ref{dfn:minimal1} and Proposition \ref{prop:minimal03}
we have $0<|\varGamma|=L(\varGamma)$.
By the proof of Lemma \ref{cor:minimal06} there
exists a sequence $\gamma_j\to\varGamma$ of bicharacteristics of
$\re p$ such that $\im p$ strongly changes sign from $-$ to $+$ on $\gamma_j$
and vanishes identically in a neighborhood of a subinterval $\tilde{\gamma}_j
\subset\gamma_j$. Moreover, $\tilde{\gamma}_j\to\varGamma$ as $j\to\infty$.
By Lemma \ref{prop:minimal05}
we have $L(\gamma_j)>0$ for sufficiently large $j$, so
according to Proposition \ref{prop:minimal03} we can for each
such
$j$ find a minimal bicharacteristic interval $\varGamma_j\subset\gamma_j$.
We have $\gamma_j\to\varGamma$ as $j\to\infty$ and since
\begin{align*}
|\varGamma|=L(\gamma)&\le\liminf_{j\to\infty}L(\gamma_j)=\liminf_{j\to\infty}|\varGamma_j|\\
&\le\limsup_{j\to\infty}|\varGamma_j|
\le\lim_{j\to\infty}|\gamma_j|=|\varGamma|,
\end{align*}
it follows that $\varGamma_j\to\varGamma$ as $j\to\infty$.
Since also $\tilde{\gamma}_j\subset\gamma_j$
and $\tilde{\gamma}_j\to\varGamma$ as $j\to\infty$, the intersection $\tilde{\gamma}_j\cap\varGamma_j$
must be nonempty for large $j$.
For such $j$ it follows that $\tilde{\gamma}_j$ must
be a proper subinterval of $\varGamma_j$, for if not,
this would contradict the fact that $\varGamma_j$
is a minimal bicharacteristic interval.
Hence we can find a sequence $\{\varrho_j\}$ of positive
numbers with $\varrho_j\to 0$ as $j\to\infty$, such that $\varGamma_j$ is a
$\varrho_j$-minimal bicharacteristic interval. We have thus
proved the following theorem, which concludes our study of the bicharacteristics.
\begin{thm}\label{thm:minimal07}
If $\varGamma$ is a minimal bicharacteristic interval
in $T^\ast(\mathbb{R}^n)\smallsetminus 0$ of the homogeneous function $p=\xi_1+i\im p$
of degree $1$, where the imaginary part is independent of $\xi_1$, then
there exists a sequence $\{\varGamma_j\}$ of $\varrho_j$-minimal
bicharacteristic intervals of $p$ such that $\varGamma_j\to\varGamma$
and $\varrho_j\to 0$ as $j\to\infty$.
\end{thm}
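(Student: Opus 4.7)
The plan is to combine Lemma \ref{cor:minimal06} with Proposition \ref{prop:minimal03} to first extract a sequence of bicharacteristics of $\re p$ on which $\im p$ strongly changes sign from $-$ to $+$ but is also flat on an interior sub-interval, and then to refine each such bicharacteristic to a minimal sub-interval that inherits a $\varrho_j$-minimality property with $\varrho_j\to 0$.

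First, since $\varGamma$ is a minimal bicharacteristic interval of $p$, Definition \ref{dfn:minimal1} together with Proposition \ref{prop:minimal03} gives $|\varGamma|=L(\varGamma)>0$. Applying Lemma \ref{cor:minimal06} to $\varGamma$ then produces bicharacteristics $\gamma_j$ of $\re p$ with subintervals $\widetilde{\gamma}_j\subset\gamma_j$ such that $\gamma_j\dashrightarrow\varGamma$, $\im p$ strongly changes sign on $\gamma_j$, $\im p$ vanishes in a neighborhood of $\widetilde{\gamma}_j$, and $|\widetilde{\gamma}_j|\to L(\varGamma)$. Lemma \ref{prop:minimal05} then yields $L(\gamma_j)>0$ for large $j$, and a second appeal to Proposition \ref{prop:minimal03} lets me pick a minimal bicharacteristic interval $\varGamma_j\subset\gamma_j$ of length $L(\gamma_j)$.

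Next I would verify $\varGamma_j\to\varGamma$ by sandwiching $|\varGamma_j|=L(\gamma_j)$ between $|\widetilde{\gamma}_j|$ and $|\gamma_j|$, both of which converge to $|\varGamma|$, and combining this with $\gamma_j\dashrightarrow\varGamma$ to pin down the endpoints of $\varGamma_j$. The central observation is that $\widetilde{\gamma}_j$ and $\varGamma_j$ must overlap for large $j$, since they both limit to $\varGamma$ which has positive length; the former must then be contained in the latter, for if $\widetilde{\gamma}_j=\varGamma_j$ were to occur then $\im p$ would vanish in a whole neighborhood of $\varGamma_j$, which would prevent the existence of approximating bicharacteristics with strong sign changes and contradict $L(\varGamma_j)=|\varGamma_j|>0$.

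With $\widetilde{\gamma}_j\subsetneq\varGamma_j$ for large $j$ and $\im p\equiv 0$ in a neighborhood of $\widetilde{\gamma}_j$, I would define $\varrho_j$ as the larger of the two endpoint gaps between $\widetilde{\gamma}_j$ and $\varGamma_j$; then $\varGamma_j$ is $\varrho_j$-minimal in the sense of Definition \ref{dfn:minimal2}, and $\varrho_j\to 0$ follows from the fact that both intervals converge to $\varGamma$. The main obstacle I anticipate is careful bookkeeping of the one-sided $\liminf$/$\limsup$ for endpoints, so as to upgrade $\gamma_j\dashrightarrow\varGamma$ to the stronger convergence $\varGamma_j\to\varGamma$, and in particular to verify that the overlap inclusion $\widetilde{\gamma}_j\subset\varGamma_j$ genuinely holds eventually rather than the two intervals merely abutting near a common endpoint.
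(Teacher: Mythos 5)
Your proposal follows the paper's own proof step by step: extract $\gamma_j$ and $\tilde\gamma_j$ from Lemma \ref{cor:minimal06}, use Lemma \ref{prop:minimal05} and Proposition \ref{prop:minimal03} to get minimal $\varGamma_j\subset\gamma_j$, sandwich lengths to upgrade to $\varGamma_j\to\varGamma$, and conclude proper containment $\tilde\gamma_j\subsetneq\varGamma_j$ to read off $\varrho_j\to 0$. The only place your sketch is slightly thinner than the paper is in ruling out all the non-containment configurations of $\tilde\gamma_j$ and $\varGamma_j$ rather than just $\tilde\gamma_j=\varGamma_j$, but the same appeal to $L(\varGamma_j)=|\varGamma_j|>0$ handles them and you flag this bookkeeping explicitly, so the argument is essentially identical.
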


We can now state our main theorem, which yields necessary conditions
for inclusion relations between the ranges of operators which fail
to be microlocally solvable.

\begin{thm}\label{bigthm1}
Let $K \subset T^{\ast}(X) \smallsetminus 0$ be a compactly based cone. Let
$P\in \varPsi_{\mathrm{cl}}^k(X)$ and $Q\in \varPsi_{\mathrm{cl}}^{k'}(X)$
be properly supported pseudo-differential operators such that
the range of $Q$ is microlocally contained in the range of $P$ at $K$,
where $P$ is an operator of principal type
in a conic neighborhood of $K$.
Let $p_k$ be the homogeneous principal symbol
of $P$, and let $I=[a_0,b_0]\subset \mathbb{R}$ be a compact interval possibly reduced
to a point. Suppose that $K$ contains a conic neighborhood of $\gamma(I)$, where
$\gamma:I\to T^\ast(X)\smallsetminus 0$ is either
\begin{enumerate}
\item[(a)] a minimal characteristic point of $p_k$,
or
\item[(b)] a minimal bicharacteristic interval of $p_k$
with injective regular projection in $S^\ast (X)$.
\end{enumerate}
Then there exists a pseudo-differential operator
$E\in \varPsi_{\mathrm{cl}}^{k'-k}(X)$ such that the terms in the asymptotic sum of the symbol of $Q-PE$  
have vanishing Taylor coefficients at $\gamma(I)$.
\end{thm}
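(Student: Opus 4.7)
The plan is to adapt Hörmander's proof of necessity of condition $(\varPsi)$ in \cite[Section 26.4]{ho4} to the present range-inclusion setting, combining it with the inductive divisibility argument used for differential operators in \cite[Theorem 6.2.4]{ho0}.

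\textbf{Reduction.} By Proposition \ref{prop.26.4.4}, conjugation by elliptic Fourier integral operators preserves the hypothesis; combined with multiplication by elliptic factors (and the invariance used in the proof of Lemma \ref{lemexiststrongly}), we reduce to the normal form $X = \mathbb{R}^n$, $p_k(x,\xi) = \xi_1 + iF(x,\xi')$ with $F$ homogeneous of degree $1$ in $\xi'$ and independent of $\xi_1$, while $\gamma(I) = I \times \{w_0\}$ with $w_0 = (0,\varepsilon_n)$. In case (b), the injective regular projection condition ensures that the bicharacteristic interval can be parametrized consistently with \eqref{eq:reducedbichars}.

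\textbf{Divisibility of the principal symbol.} Apply Lemma \ref{lemrange1}: for any $\kappa$ there exist $C,\nu$ and $A$ with $W\!F(A) \cap K = \emptyset$ such that
\[
\|Q^*v\|_{(-N)} \le C\bigl(\|P^*v\|_{(\nu)} + \|v\|_{(-N-\kappa-n)} + \|Av\|_{(0)}\bigr),\quad v\in C_0^\infty(Y).
\]
Construct a family $v_\lambda \in C_0^\infty(Y)$, $\lambda\to\infty$, of WKB type concentrated near a point of $\gamma(I)$. In case (a) this is centered at the minimal characteristic point itself. In case (b) we exploit Theorem \ref{thm:minimal07}: arbitrarily close to $\gamma(I)$ lie $\varrho_j$-minimal bicharacteristic intervals on which, by Definition \ref{dfn:minimal2} and Proposition \ref{prop:minimal04}, $\im p$ vanishes in a neighborhood of an interior subinterval, so the WKB phase can be chosen essentially real to arbitrary order, rendering $\|P^*v_\lambda\|_{(\nu)}$ rapidly decreasing. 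Since $v_\lambda$ is microlocally concentrated at $\gamma(I)\subset K$, $\|Av_\lambda\|_{(0)}$ is also rapidly decreasing, while $\|v_\lambda\|_{(-N-\kappa-n)}$ grows only polynomially in $\lambda$. Testing $(Qf_\lambda, v_\lambda) = (f_\lambda, Q^*v_\lambda)$ against auxiliary WKB wave packets $f_\lambda$ and applying stationary phase extracts Taylor coefficients of $\sigma(Q)_{k'}$ at $\gamma(I)$; the a priori estimate forces these to lie in the formal ideal generated by the Taylor coefficients of $p_k$. Thus there exists a homogeneous symbol $e_0$ of degree $k'-k$ such that $\sigma(Q)_{k'} - p_k\, e_0$ has vanishing Taylor expansion at every point of $\gamma(I)$.

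\textbf{Iteration and Borel summation.} Form $Q_1 = Q - PE_0$ where $E_0 \in \varPsi_{\mathrm{cl}}^{k'-k}(X)$ has principal symbol $e_0$. The range of $Q_1$ is still microlocally contained in that of $P$ at $K$, and its top-order symbol vanishes to infinite order at $\gamma(I)$. Iterating the construction on the lower-order homogeneous terms, we obtain symbols $e_j$ of degree $k'-k-j$ such that the Taylor expansion of each homogeneous term of $\sigma(Q - P(E_0 + \cdots + E_j))$ vanishes at $\gamma(I)$ through appropriate order. Combining by Borel summation yields $E \in \varPsi_{\mathrm{cl}}^{k'-k}(X)$ with the required property.

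\textbf{Main obstacle.} The delicate point is the test function construction of the divisibility step in case (b): producing $v_\lambda$ with $\|P^*v_\lambda\|_{(\nu)} = O(\lambda^{-\infty})$ requires the phase to satisfy the eikonal equation for $p_k$ to arbitrarily high order along a bicharacteristic interval (not just a single point), which is only possible because the vanishing of $\im p$ encoded in Definition \ref{dfn:minimal2}—achieved via the $\varrho_j$-minimal approximation from Theorem \ref{thm:minimal07}—allows the imaginary part of the phase to be controlled to infinite order on the approximating intervals. Extracting full Taylor information uniformly at every $\gamma(t)$ requires a careful choice of phase and amplitude that moves the localization along the interval as the parameters vary.
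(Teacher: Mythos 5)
Your overall strategy---reduce via Proposition~\ref{prop.26.4.4} to the normal form $\xi_1+iF(x,\xi')$, apply Lemma~\ref{lemrange1} to WKB-type test functions built along the $\varrho_j$-minimal approximants from Theorem~\ref{thm:minimal07}, and iterate to produce $E$---is the right one and closely mirrors the paper's argument for the interval case. However, there are two concrete gaps.

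First, you do not address the case where $\gamma(I)$ is a \emph{point} at which condition \eqref{eq:mainthm1} fails, i.e., where $f=0$ but $\partial f/\partial x_1>0$, equivalently $\{\re p_k,\im p_k\}>0$ on the characteristic set. At such a point the eikonal-based phase construction you rely on (Proposition~\ref{prop:minimal04} and Definition~\ref{dfn:minimal2}: ``$\im p$ vanishes in a neighborhood, so the phase can be chosen essentially real to high order'') does not apply, since $\im p$ does not vanish to any nontrivial order there; Theorem~\ref{mainthm1} explicitly assumes \eqref{eq:mainthm1}. The paper handles this separately (case iii) by conjugating to the Mizohata normal form $D_1+ix_1D_n$, where an explicit phase is available (Theorem~\ref{mainthm2}), proving vanishing of Taylor coefficients along a sequence $\gamma_j$ approaching the point, and then passing to the limit with Proposition~\ref{appthm26}. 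That limit argument---showing the pointwise operators $E_j$ converge to a single classical symbol $e$ with the required Taylor expansion---is itself nontrivial and absent from your sketch. Note that the interval case (b) is immune because when $a_0<b_0$ condition \eqref{eq:mainthm1} holds automatically; it is precisely the point case that can fail it.

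Second, the paper first uses the Malgrange preparation theorem to write $Q=PE+R(x,D_{x'})$ with $\sigma_R$ independent of $\xi_1$, and then shows that all Taylor coefficients of $\sigma_R$ vanish. This is not merely organizational: the $\xi_1$-independence of $R$ is what makes the test-function expansion and the pairing against the scaled bumps $h_\tau$ tractable (compare the definitions of $\lambda_J$ in the proofs of Theorems~\ref{mainthm1} and~\ref{mainthm2}). Your scheme claims to extract ``divisibility'' of each homogeneous term of $\sigma_Q$ directly via stationary phase, but you would then need to track the full $\xi_1$-dependence of the symbol at every order, and it is not clear from your outline how the a priori estimate forces the remainder in the formal division at each order to vanish without first reducing to a residual concentrated on $\xi_1=0$. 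You should either carry out the Malgrange step or explain how the direct extraction handles the $\xi_1$-direction.
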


Note that the hypotheses of Theorem \ref{bigthm1} imply
that $P$ is not solvable at the cone $K$. Indeed,
solvability at $K\subset T^\ast(X)\smallsetminus 0$ implies solvability
at any smaller closed cone, 
and in view of Definition \ref{dfn:minimal1} it follows
by ~\cite[Theorem $26.4.7 '$]{ho4}
together with ~\cite[Proposition $26.4.4$]{ho4} that $P$ is
not solvable at the cone generated by $\gamma(I)$.
Conversely, suppose that $P$ is an operator of principal type that is
not microlocally solvable
in any neighborhood of a point $(x_0,\xi_0)\in T^\ast(X)\smallsetminus 0$.
Then the principal symbol $p_k$ fails to satisfy condition \eqref{intropsi} in every
neighborhood of $(x_0,\xi_0)$ by ~\cite[Theorem $1.1$]{de}. In view
of the alternative version of condition \eqref{intropsi} given by
~\cite[Theorem $26.4.12$]{ho4}, it is then easy to see using
~\cite[Theorem $21.3.6$]{ho3} and ~\cite[Lemma $26.4.10$]{ho4}
that $(x_0,\xi_0)$ is a minimal characteristic point of $p_k$,
so Theorem \ref{bigthm1} applies 
there.

We also mention that if $P$ is of principal type and $\gamma$ is a minimal
bicharacteristic interval of the principal
symbol $p_k$ contained in a curve
along which $p_k$ fails to satisfy condition \eqref{intropsi},
then $\gamma$ has injective regular projection in $S^\ast (X)$ by
the proof of ~\cite[Theorem 26.4.12]{ho4}.

\begin{rmk}\label{rmk:solvablebutnonelliptic1}
As pointed out in the introduction, we cannot hope to obtain a result such
as Theorem \ref{bigthm1} for solvable non-elliptic operators in general.
Indeed, Example \ref{ex:d1solvable} shows
that if $X\subset\mathbb{R}^n$ is open, and $K\subset T^\ast (X)\smallsetminus 0$ is a compactly based cone,
then the range of $D_2$ is microlocally contained in
the range of $D_1$ at $K$.
If there were to exist a pseudo-differential operator $e(x,D)\in\varPsi_{\mathrm{cl}}^0(X)$
such that all the terms in the symbol of $R(x,D)=D_2-D_1 \circ e(x,D)$ have vanishing Taylor coefficients
at a point $(x_0,\xi_0)\in K$ contained in a bicharacteristic of the principal symbol
$\sigma(D_1)=\xi_1$ of $D_1$, then in particular this would hold for
the principal symbol
\begin{equation*}\label{eq:solvablebutnonelliptic1}
\sigma(R)(x,\xi)=\xi_2-\xi_1 e_0(x,\xi),
\end{equation*}
if $e_0$ denotes the principal symbol of $e(x,D)$.
However, taking the $\xi_2$ derivative of the equation above and
evaluating at $(x_0,\xi_0)$ then immediately yields the contradiction $0=1$ since
$(x_0,\xi_0)$ belongs to the hypersurface $\xi_1=0$.
\end{rmk}

In the proof of the theorem
we may assume that $P$ and $Q$ are operators of order $1$.
In fact, the discussion following Definition \ref{defrange}
shows that if the conditions of Theorem \ref{bigthm1} hold
and $Q_1\in \varPsi_{\mathrm{cl}}^{k-k'}(X)$ and
$Q_2\in \varPsi_{\mathrm{cl}}^{1-k}(X)$ are properly supported,
then the range of $Q_2QQ_1\in \varPsi_{\mathrm{cl}}^{1}(X)$
is microlocally contained in the range of $Q_2P\in \varPsi_{\mathrm{cl}}^{1}(X)$ at $K$.
If the theorem holds for operators of the same order $k$ then there exists an
operator $E\in\varPsi_{\mathrm{cl}}^{0}(X)$
such that all the terms in the asymptotic expansion of the symbol of $QQ_1-PE$
have vanishing Taylor coefficients at $\gamma(I)$. If we choose $Q_1$ to be elliptic,
then we can find a parametrix $Q_1^{-1}$ of $Q_1$ so that
\[
Q-PEQ_1^{-1}\equiv (QQ_1-PE)\circ Q_1^{-1} \quad \text{mod }\varPsi^{-\infty}(X)
\]
has symbol
\begin{equation}\label{eq:claimindepofxi1}
\sigma_{A\circ Q_1^{-1}}(x,\xi)\sim \sum \partial_\xi^\alpha \sigma_A(x,\xi) D_x^\alpha
\sigma_{Q_1^{-1}}(x,\xi)/\alpha!
\end{equation}
with $A=QQ_1-PE$. Clearly, all the terms in the asymptotic expansion of the symbol of $Q-PEQ_1^{-1}$
then have vanishing Taylor coefficients at $\gamma(I)$, and $E_1=EQ_1^{-1}\in\varPsi_{\mathrm{cl}}^{k'-k}(X)$
so the theorem holds with $E$ replaced by $E_1$. If the theorem holds for operators of order $1$
we can choose $Q_2$ elliptic and use the same argument to show that if
all the terms in the asymptotic expansion of the symbol of $Q_2QQ_1-Q_2PE$
have vanishing Taylor coefficients at $\gamma(I)$, then the same holds for
\[
Q-PEQ_1^{-1}\equiv Q_2^{-1}\circ (Q_2QQ_1-Q_2PE)\circ Q_1^{-1} \quad \text{mod }\varPsi^{-\infty}(X),
\]
where $Q_2^{-1}$ is a parametrix of $Q_2$.
Here we use the fact that if $\gamma(I)$ is a minimal characteristic
point or a minimal bicharacteristic interval of the principal symbol
of $P$, then this also holds for the principal symbol of $Q_2P$
by Definition \ref{dfn:minimal1}.

For pseudo-differential operators, the property
that all terms in the asymptotic expansion of the total symbol have vanishing
Taylor coefficients is preserved
under conjugation with Fourier integral operators
associated with a canonical transformation (see Lemma \ref{symbolvanishafterconjugation}
in the appendix).
Thus we will be able to prove Theorem \ref{bigthm1} by local arguments and an application of
Proposition \ref{prop.26.4.4}.

Let $\gamma:I\to T^\ast(X)\smallsetminus 0$, $I=[a_0,b_0]\subset\mathbb{R}$, be the map given
by Theorem \ref{bigthm1}.
By
using ~\cite[Theorem $21.3.6$]{ho3} or ~\cite[Theorem $26.4.13$]{ho4} when $\gamma$ is a characteristic point
or a one dimensional bicharacteristic, respectively, we can find a $C^\infty$ canonical transformation $\chi$ from
a conic neighborhood of $\varGamma=\{(x,\varepsilon_n): x_1\in I, \, x'=0\}$ in $T^\ast(\mathbb{R}^n)\smallsetminus 0$
to a conic neighborhood of $\gamma(I)$ in $T^\ast(X)\smallsetminus 0$ and a $C^\infty$ homogeneous function
$b$ of degree $0$
with no zero on $\gamma(I)$ such that $\chi(x_1,0,\varepsilon_n)=\gamma(x_1)$, $x_1\in I$,
and
\begin{equation}\label{eq:normal1}
\chi^\ast(bp_1)=\xi_1+if(x,\xi')
\end{equation}
where $f$ is real valued, homogeneous of degree $1$ and independent of $\xi_1$.
Thus, by the hypotheses of Theorem \ref{bigthm1}
one can in any neighborhood of $\varGamma$
find an interval in the $x_1$ direction where $f$ changes sign from $-$ to $+$ for
increasing $x_1$. Also, if $I$ is an interval then $f$ vanishes of infinite order on $\varGamma$
by \eqref{eq:minimal04}, and by Theorem \ref{thm:minimal07}
there exists a sequence $\{\varGamma_j\}$ of $\varrho_j$-minimal
bicharacteristics of $\chi^\ast(bp_1)$ such that
$\varrho_j\to 0$ and $\varGamma_j\to\varGamma$ as $j\to\infty$.

The existence of the canonical transformation $\chi$ together with
Proposition \ref{prop.26.4.4} implies that we can find Fourier integral operators $A$ and $B$
such that the range of $BQA$ is microlocally contained in the range of $BPA$ at a cone $K'$ containing
$\varGamma$, where the principal symbol of $BPA$ is
given by \eqref{eq:normal1}. 
In view of Lemma \ref{symbolvanishafterconjugation} in Appendix \ref{appendix1} we may therefore
reduce the proof to the case $P, Q\in\varPsi_{\mathrm{cl}}^1(\mathbb{R}^n)$ and the principal symbol $p$ of
$P$ given by \eqref{eq:normal1}. In accordance with the notation in Proposition
\ref{prop.26.4.4} we will assume that the range of $Q$
is microlocally contained in the range of $P$ at a cone $K$ containing
$\varGamma$, thus renaming $K'$ to $K$.
If
\[\sigma_{Q}=q_1+q_{0}+\ldots
\]
is the asymptotic sum of homogeneous terms of the symbol of $Q$, we can then
use the Malgrange preparation theorem (see ~\cite[Theorem $7.5.6$]{ho1}) to find
$e_{0}, r_1\in C^\infty$ near $\varGamma$ such that
\[q_1(x,\xi)=(\xi_1+if(x,\xi'))e_{0}(x,\xi)+r_1(x,\xi'),
\]
where $r_1$ is independent of $\xi_1$. Restricting to $|\xi|=1$
and extending by homogeneity
we can make $e_{0}$ and $r_1$ homogeneous of degree $0$ and $1$, respectively.
The term of degree $1$ in the symbol of $Q-P\circ e_{0}(x,D)$ is $r_1(x,\xi')$. 
Again, by
Malgrange's preparation theorem we can find $e_{-1}, r_{0}\in C^\infty$ near $\varGamma$ such that
\begin{align*}
q_{0}(x,\xi) & - \sigma_{0}(P\circ e_{0}(x,D))(x,\xi)\\
& =(\xi_1+if(x,\xi'))e_{-1}(x,\xi)+r_{0}(x,\xi'),
\end{align*}
where $e_{-1}$ and $r_{0}$ are homogeneous of degree $-1$ and $0$, respectively, and
$r_{0}$ is independent of $\xi_1$.
The term of degree $0$ in the symbol of
\[
Q-P\circ e_{0}(x,D)-P\circ e_{-1}(x,D)
\]
is $r_0(x,\xi')$.
Repetition of the argument allows us
to write
\begin{equation}\label{eq:bqadef}
Q=P\circ E+R(x,D_{x'})
\end{equation}
where $\sigma_R(x,\xi')=r_{1}(x,\xi')+r_{0}(x,\xi')+\ldots$
is an asymptotic sum of homogeneous terms, all independent of $\xi_1$.
Thus $R(x,D_{x'})$ is a pseudo-differential operator in the $n-1$ variables
$x'$ depending on $x_1$ as a parameter.
Furthermore, the range of $R(x,D_{x'})$ is microlocally contained in the
range of $P$ at $K$. Indeed, 
suppose $N$ is the integer given by Definition \ref{defrange}.
If $g\in H_{(N)}^{\mathrm{loc}}(\mathbb{R}^n)$, then $Rg=PEg-Qg=Pv-Qg$
for some $v\in \mathscr{D}'(\mathbb{R}^n)$, and there exists a $u\in \mathscr{D}'(\mathbb{R}^n)$
such that
\[K\cap W\! F(Qg-Pu)=\emptyset.
\]
Hence,
\[W\! F(P(v-u)-Rg)
\]
does not meet $K$, so the range of $R$ is microlocally contained in the range of $P$ at $K$.
We claim that under the assumptions of Theorem \ref{bigthm1}, this implies that all terms in the asymptotic sum
of the symbol of the
operator $R(x,D_{x'})$ in \eqref{eq:bqadef} have vanishing Taylor coefficients at $\varGamma$,
thus proving Theorem \ref{bigthm1}.
The proof of this claim will be based on the
two theorems stated below.
As we have seen, the principal symbol $p$ of $P$ may be assumed to have
the normal form given by \eqref{eq:normal1}. By means of Theorem \ref{mainthm2} below,
we shall also use the fact that an
even simpler normal form exists near a point where $p=0$ and $\{\re p,\im p\}\ne 0$.
To prove these two theorems, we will use techniques that actually require
the lower order terms of $P$ to be independent of $\xi_1$ near $\varGamma$. However, we claim that this may
always be assumed. In fact, Malgrange's preparation theorem implies that
\[
p_0(x,\xi)=a(x,\xi)(\xi_1+if(x,\xi'))+b(x,\xi')
\]
where $a$ is homogeneous of degree $-1$ and $b$ homogeneous of degree $0$, as
demonstrated in the construction of the operators $E$ and $R$ above. The
term of degree $0$ in the symbol of $(I-a(x,D))P$ is equal to $b(x,\xi')$.
Repetition of the argument implies that
there exists a classical operator $\widetilde{a}(x,D)$ of order $-1$ such that
$(I-\widetilde{a}(x,D))P$ has principal symbol
$\xi_1+if(x,\xi')$ and all lower order terms are independent of $\xi_1$.
The microlocal property of pseudo-differential
operators immediately implies that the range of $(I-\widetilde{a}(x,D))Q$ is microlocally contained in the range
of $(I-\widetilde{a}(x,D))P$ at $K$.
Hence, if there are operators $E$ and $R$ with
\[
R=(I-\widetilde{a}(x,D))Q-(I-\widetilde{a}(x,D))P E
\]
such that all terms in the asymptotic expansion of the symbol
of $R$ have vanishing
Taylor coefficients at $\varGamma$, then this also holds for
the symbol of $Q-PE\equiv (I-\widetilde{a}(x,D))^{-1}R$ mod $\varPsi^{-\infty}$,
since this property is preserved under composition with elliptic
pseudo-differential operators by \eqref{eq:claimindepofxi1}.

\begin{thm}\label{mainthm2}
Suppose that in a conic neighborhood $\varOmega$ of
\[\varGamma'=\{(0,\varepsilon_n)\}\subset T^*(\mathbb{R}^n)\smallsetminus 0
\]
$P$ has the form $P=D_1+ix_1D_n$ and
the symbol of $R(x,D_{x'})$ is given by the asymptotic sum
\[\sigma_R=\sum_{j=0}^\infty r_{1-j}(x,\xi')
\]
with $r_{1-j}$ homogeneous of degree $1-j$
and independent of $\xi_1$. If there exists a compactly based cone $K\subset T^*(\mathbb{R}^n)\smallsetminus 0$
containing $\varOmega$
such that the range of $R$ is microlocally contained in the range of $P$ at $K$, 
then all the terms in the asymptotic sum of the symbol of $R$ have vanishing Taylor coefficients at $\varGamma'$.
\end{thm}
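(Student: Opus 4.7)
The plan is to derive a contradiction from any nonvanishing Taylor coefficient of some $r_{1-j}$ at $\varGamma'$ by inserting concentrated quasi-modes of $P^*$ into the dual a priori estimate provided by Lemma \ref{lemrange1}. Applying that lemma with $Q=R$ (after first shrinking and cutting off so that the operator in question is properly supported near $\varGamma'$) gives, for every positive integer $\kappa$, constants $C,\nu$ and a properly supported $A$ with $W\! F(A)\cap K=\emptyset$ such that
\begin{equation*}
\|R^* v\|_{(-N)}\le C\bigl(\|P^* v\|_{(\nu)}+\|v\|_{(-N-\kappa-n)}+\|Av\|_{(0)}\bigr),\qquad v\in C_0^\infty(Y),
\end{equation*}
for some open $Y\Subset\mathbb{R}^n$ containing the origin with $\varGamma'\in T^\ast(Y)$. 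The freedom to take $\kappa$ arbitrarily large is crucial: it lets the middle term be made smaller than any prescribed power of $\tau$, so that symbol information for $R$ is forced out of $\|R^*v_\tau\|_{(-N)}$ alone.

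For the quasi-modes I take the complex phase $\phi(x)=x_n+i|x'|^2/2$ with $x'=(x_1,\dots,x_{n-1})$, which satisfies the eikonal equation $\partial_1\phi-ix_1\partial_n\phi=0$ associated with $P^*=D_1-ix_1D_n$, has $\im\phi\ge 0$ with a unique critical point at $x=0$, and has $\nabla_{x'}\phi(0)=\varepsilon_n'$. Fix $\chi\in C_0^\infty(Y)$ equal to $1$ near $0$, with $\supp\chi$ small enough that the wave front set of $v_\tau$ lies in $K$, and put $v_\tau(x)=\chi(x)e^{i\tau\phi(x)}$. Then $P^* v_\tau=e^{i\tau\phi}[P^*,\chi]\cdot 1$ is supported where $\chi$ is nonconstant, hence bounded away from $\{x'=0\}$; combined with the Gaussian factor $e^{-\tau|x'|^2/2}$ this yields $\|P^* v_\tau\|_{(\nu)}=O(\tau^{-\infty})$ for every $\nu$. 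Standard stationary-phase estimates on $\widehat v_\tau$, which concentrates at $\tau\varepsilon_n$, give $\|v_\tau\|_{(s)}\asymp\tau^{s-(n-1)/4}$ for every $s\in\mathbb{R}$, while $\|Av_\tau\|_{(0)}=O(\tau^{-\infty})$ since the wave front set of $v_\tau$ lies on the ray through $(0,\varepsilon_n)\in K$ and $W\! F(A)\cap K=\emptyset$.

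An oscillatory-integral expansion then produces
\begin{equation*}
R^* v_\tau(x)=e^{i\tau\phi(x)}\sum_{k=0}^{M}\tau^{1-k}b_k(x)+O(\tau^{-\infty}),
\end{equation*}
where $b_0(x)=\overline{r_1(x,\nabla_{x'}\phi(x))}\chi(x)$ and each $b_k$ is a universal polynomial in the derivatives $\partial_x^\alpha\partial_{\xi'}^\beta r_{1-j}$ (with $j\le k$) evaluated at $\xi'=\nabla_{x'}\phi(x)$, weighted by derivatives of $\chi$ and inverse powers of the complex Hessian $\partial_{x'}^2\phi$. Plugging $v_\tau$ into the estimate and balancing powers of $\tau$ gives the key scheme: if $b_0,\dots,b_{k-1}$ vanish identically near $0$ but $b_k(0)\ne 0$, then choosing $\kappa>k$ forces $\tau^{1-k-N-(n-1)/4}\lesssim\tau^{-N-\kappa-n-(n-1)/4}$, a contradiction for large $\tau$; by induction on $k$ one gets $b_k(0)=0$ for all $k$. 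To promote this to vanishing of every Taylor coefficient $\partial_x^\alpha\partial_{\xi'}^\beta r_{1-j}(0,\varepsilon_n')$ at $\varGamma'$, I would test with the enlarged family $v_\tau^{\alpha,\beta}(x)=\tau^{(|\alpha|+|\beta|)/2}x^\alpha e^{i\tau(\phi(x)+\beta\cdot x'/\tau)}\chi(x)$: multiplying by $x^\alpha$ and perturbing the phase by $\beta\cdot x'/\tau$ shifts the critical point in the base and the effective frequency, and through stationary phase brings the prescribed derivative $\partial_x^\alpha\partial_{\xi'}^\beta r_{1-j}(0,\varepsilon_n')$ to leading order. Induction on the weight $|\alpha|+|\beta|+j$, using the same $\tau$-power balance, then eliminates every Taylor coefficient.

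The main obstacle is the bookkeeping in this expansion: one must verify that, for the right choice of $(\alpha,\beta)$, the only Taylor coefficient appearing at the extracted leading $\tau$-order is the unique one still to be ruled out, rather than a linear combination into which previously annulled coefficients might have been folded. Once this combinatorial structure is pinned down, the argument proceeds in essentially the form used by Hörmander to establish the necessity of condition $(\varPsi)$ for local solvability in \cite[\S 26.4]{ho4}, adapted here to the dual estimate for $R^*$ rather than for the identity.
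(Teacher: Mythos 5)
Your overall scheme---feed concentrated quasi-modes of $P^\ast$ into Lemma \ref{lemrange1} with $Q=R$, make the middle term on the right of the estimate negligible by taking $\kappa$ large, and read off Taylor data of $\sigma_R$ from $R^\ast v_\tau$---is exactly the paper's strategy. But there is a genuine gap in your choice of phase.

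You take $\phi(x)=x_n+i|x'|^2/2$ with $x'=(x_1,\dots,x_{n-1})$, so $\im\phi=|x'|^2/2$ vanishes on the entire line $\{x'=0\}$, not only at the origin; it is not true that $\im\phi>0$ except at $x=0$. Since $P^\ast v_\tau=([P^\ast,\chi]\cdot 1)\,e^{i\tau\phi}$ is supported where $\chi$ is non-constant, and this annular region necessarily meets $\{x'=0\}$, the factor $e^{-\tau|x'|^2/2}$ gives no decay there. Writing $\chi=\chi_1(x')\chi_2(x_n)$ the dangerous piece is $-ix_1\chi_1(D_n\chi_2)e^{i\tau\phi}$, which near $x'=0$ is merely $O(|x_1|)$ and gives $\|P^\ast v_\tau\|_{(\nu)}\sim\tau^{\nu-(n+1)/4}$, only a gain of $\tau^{-1/2}$ over $\|v_\tau\|_{(\nu)}$; this is far from the $O(\tau^{-\infty})$ you claim, and in the balance against $\|R^\ast v_\tau\|_{(-N)}$ the fixed $\nu$ of Lemma \ref{lemrange1} wrecks the argument. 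The paper avoids this precisely by adding a higher-order correction to the phase, taking $w(x)=x_n+i\bigl(x_1^2+\cdots+x_{n-1}^2+(x_n+ix_1^2/2)^2\bigr)/2$, which still satisfies $P^\ast w=0$ exactly but now has $\im w\ge|x|^2/4$ near the origin, so the vanishing set is a single point and the commutator term is honestly $O(\tau^{-\infty})$. Your phase cannot be repaired by shrinking $\supp\chi$; the $x_n$-axis passes through any neighborhood of $0$.

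The second difficulty is the mechanism for extracting individual Taylor coefficients. Multiplying by $x^\alpha$ and perturbing the phase by $\beta\cdot x'/\tau$ destroys both the transport equation and the eikonal equation at a nontrivial order: $P^\ast(x^\alpha)\ne 0$ in general, and $\partial_1(\phi+\beta\cdot x'/\tau)-ix_1\partial_n(\phi+\beta\cdot x'/\tau)=\beta_1/\tau$ contributes an $O(1)$ term after multiplication by $\tau$, so $P^\ast v_\tau^{\alpha,\beta}$ is not small. The paper instead keeps the phase fixed and exploits two separate freedoms: the amplitude $\phi$ may be chosen to solve $P^\ast\phi=0$ exactly with arbitrary analytic initial data on $\{x_1=0\}$ (Cauchy--Kovalevsky), which produces the $\partial_\xi^\beta$ derivatives after stationary phase; and the test function $H$ in $I_\tau=\tau^n\int H(\tau x)R^\ast v_\tau\,dx$ together with the rescaling $\tau x\mapsto x$ produces the $\partial_x^\alpha$ derivatives. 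Reconciling the two sources, and making sure that only the one coefficient to be ruled out survives at the leading $\tau$-power, requires the specific total ordering $>_t$ on the Taylor indices introduced in Definition \ref{defordering}; you correctly anticipated that the bookkeeping is the crux, but your proposal does not supply a working substitute for that ordering.
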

\begin{thm}\label{mainthm1}
Suppose that in a conic neighborhood $\varOmega$ of
\[\varGamma'=\{(x_1,x',0,\xi'): a\leq x_1 \leq b\}\subset T^*(\mathbb{R}^n)\smallsetminus 0
\]
the principal symbol of $P$ has the form
\[p(x,\xi)=\xi_1+if(x,\xi')
\]
where $f$ is real valued and homogeneous of degree $1$, and suppose that if $b>a$
then $f$ vanishes of infinite order on $\varGamma'$ and there exists a $\varrho\ge 0$ such that
for any $\varepsilon>\varrho$ one can
find a neighborhood of
\begin{equation}\label{eq:mainthm11}
\varGamma_\varepsilon'=\{(x_1,x',0,\xi'): a+\varepsilon \leq x_1 \leq b-\varepsilon\}
\end{equation}
where $f$ vanishes identically.
Suppose also that
\begin{equation}\label{eq:mainthm1}
f(x,\xi')=0 \quad \Longrightarrow \quad \partial f (x,\xi') / \partial x_1 \leq 0
\end{equation}
in $\varOmega$ and that in
any neighborhood of $\varGamma'$ one can find an interval in the $x_1$ direction where $f$
changes sign from $-$ to $+$
for increasing $x_1$. Furthermore, suppose that in $\varOmega$
the symbol of $R(x,D_{x'})$ is given by the asymptotic sum
\[\sigma_R=\sum_{j=0}^\infty r_{1-j}(x,\xi')
\]
with $r_{1-j}$ homogeneous of degree $1-j$
and independent of $\xi_1$. If
the lower order terms $p_0,p_{-1},\ldots$ in the symbol of $P$
are independent of $\xi_1$ near $\varGamma'$, and
there exists a compactly based cone $K\subset T^*(\mathbb{R}^n)\smallsetminus 0$
containing $\varOmega$
such that the range of $R$ is microlocally contained in the range of $P$ at $K$, 
then all the terms in the asymptotic sum of the symbol of $R$ have vanishing Taylor coefficients on
$\varGamma_\varrho'$ if $a<b$, and at $\varGamma'$ if $a=b$.
\end{thm}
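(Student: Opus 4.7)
The plan is to combine the a priori estimate of Lemma \ref{lemrange1} with a construction of WKB quasimodes concentrated microlocally at each point of $\varGamma_\varrho'$ (or of $\varGamma'$ in the case $a=b$), adapting the scheme used to prove the necessity of condition $(\varPsi)$ in \cite[Theorem $26.4.7'$]{ho4}. Applied to the pair $(P,R)$, Lemma \ref{lemrange1} gives, for any positive integer $\kappa$ and any $Y\Subset\mathbb{R}^n$ with $K\subset T^*(Y)$, constants $C,\nu$ and a properly supported operator $A$ with $W\!F(A)\cap K=\emptyset$ such that
\[
\|R^*v\|_{(-N)} \leq C(\|P^*v\|_{(\nu)} + \|v\|_{(-N-\kappa-n)} + \|Av\|_{(0)})
\]
holds for all $v\in C_0^\infty(Y)$. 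I would then show that for each fixed point $(x^0,(0,\xi^{0\prime}))$ of $\varGamma_\varrho'$ (or $\varGamma'$) and each pair of multi-indices $(\alpha,\beta')$ there is a family $v_\tau\in C_0^\infty(Y)$, $\tau\to+\infty$, for which the three terms on the right are $O(\tau^{-M})$ for every $M$, while the left side is asymptotically a positive power of $\tau$ times $r^{(\beta')}_{1-j,(\alpha)}(x^0,\xi^{0\prime})$; the estimate then forces every such Taylor coefficient to vanish.

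The test functions are of WKB type $v_\tau(x)=\tau^{s}e^{i\tau w(x)}a(x,\tau)$, with phase $w(x)=\langle x'-x^{0\prime},\xi^{0\prime}\rangle+\tfrac{i}{2}|x'-x^{0\prime}|^2$ possibly adjusted by a smooth correction vanishing to infinite order at $x^0$. Since $f$ vanishes identically in a neighborhood of $\varGamma_\varepsilon'$ for every $\varepsilon>\varrho$, and hence to infinite order in $(x',\xi')$ at each interior point of $\varGamma_\varrho'$, this ansatz satisfies the eikonal equation $\bar p(x,\partial w)=\partial_{x_1}w-if(x,\partial_{x'}w)=0$ to infinite order at $x^0$, and $\im w\geq c|x'-x^{0\prime}|^2$ near $x^0$. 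At points where $f$ does not vanish to infinite order (the case $a=b$, or near the endpoints $x_1=a+\varrho$ and $x_1=b-\varrho$ when $a<b$), one constructs $w$ by solving the complex eikonal equation directly, exploiting the sign-change hypothesis together with \eqref{eq:mainthm1} to guarantee $\im w\geq 0$ on both sides of $x_1^0$. The amplitude $a=a_0+\tau^{-1}a_1+\cdots$ is supported near $x^0$, modulated by a polynomial prefactor in $(x-x^0,\xi'-\xi^{0\prime})$ designed to extract the Taylor coefficient indexed by $(\alpha,\beta')$, and solves the successive transport equations for $P^*$. Here the hypothesis that $p_0,p_{-1},\ldots$ are independent of $\xi_1$ is essential, since it keeps the transport equations in the class of symbols independent of $\xi_1$ along the parameter direction $x_1$ and renders them solvable to all orders.

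With $v_\tau$ so constructed, $\|P^*v_\tau\|_{(\nu)}=O(\tau^{-M})$ for every $M$ by the eikonal and transport conditions; $\|v_\tau\|_{(-N-\kappa-n)}=O(\tau^{-M})$ by choosing $\kappa$ sufficiently large, using that the oscillatory factor gives rapid decay in negative Sobolev norms; and $\|Av_\tau\|_{(0)}=O(\tau^{-M})$ because $v_\tau$ is microlocalized at $(x^0,(0,\xi^{0\prime}))\in K$ while $W\!F(A)\cap K=\emptyset$. A stationary-phase computation shows that $\|R^*v_\tau\|_{(-N)}$ has principal part proportional to a positive power of $\tau$ times $|r^{(\beta')}_{1-j,(\alpha)}(x^0,\xi^{0\prime})|$, so this coefficient must vanish. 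An induction on $j$ — killing each leading contribution by a suitable choice of the $\tau^{-j}$ correction $a_j$ to the amplitude and then testing the next homogeneous symbol $r_{-j}$ — yields vanishing of every Taylor coefficient of every $r_{1-j}$.

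The hardest part will be the phase construction near endpoints of $\varGamma_\varrho'$, where $f$ need not vanish to infinite order and the sign-change hypothesis must actually be invoked to guarantee $\im w\geq 0$; this is where condition \eqref{eq:mainthm1} and the $\varrho$-minimality coming from Theorem \ref{thm:minimal07} are indispensable. A second delicate point is the uniform bookkeeping of powers of $\tau$ across the infinite asymptotic expansion of $\sigma_R$ and across all Taylor derivatives, which requires simultaneously constructing the amplitude corrections $a_j$ and the polynomial prefactors so that the stationary-phase expansion of the duality pairing exposes one Taylor coefficient at a time, without contamination from terms of lower order in $\tau$.
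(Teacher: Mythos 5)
Your plan shares the skeleton of the paper's proof: use Lemma~\ref{lemrange1}, construct WKB quasimodes of $P^*$, and show that a nonvanishing Taylor coefficient of the symbol of $R$ forces a lower bound incompatible with the a~priori estimate. But the phase construction you propose has a gap that, once repaired, collapses the argument onto the paper's, and one further structural point is missing.

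The gap is in how you handle interior points of $\varGamma_\varrho'$. Your phase $w(x)=\langle x'-x^{0\prime},\xi^{0\prime}\rangle+\tfrac{i}{2}|x'-x^{0\prime}|^2$ is independent of $x_1$, so $\im w\equiv 0$ on the whole line $\{x'=x^{0\prime}\}$. The amplitude must be cut off to get $v_\tau\in C_0^\infty(Y)$, and the commutator $[\text{cutoff},P^*]$ is supported at the boundary of the $x_1$-support, where $\im w=0$ under your ansatz; those error terms do \emph{not} decay as $\tau\to\infty$, so $\|P^*v_\tau\|_{(\nu)}$ is not $O(\tau^{-M})$. Moreover this cannot be repaired by ``a smooth correction vanishing to infinite order at $x^0$'': on a neighborhood of $\varGamma_\varepsilon'$ one has $f\equiv 0$, so the eikonal equation forces $\partial_{x_1}w=0$ there, and $\im w$ simply cannot grow in the $x_1$ direction in the interior. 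The only way to make $\im w>0$ at the endpoints of the $x_1$-support is to let the support reach into the region where $f$ changes sign, and then to solve the full Hamilton--Jacobi system along a curve $t\mapsto(t,y(t),0,\eta(t))$, exactly as in Lemma~\ref{lem26.4.14}: the sign change of $f$ forces $\im w_0'$ to be negative then positive, making $\im w_0(a')>0$, $\im w_0(b')>0$ and $\im w_0\equiv 0$ on the middle. This is not a peripheral detail for the endpoints, as you present it; it is the entire phase construction, and once you do it your approach becomes the paper's.

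A second point you omit: because $f$ vanishes identically near $\varGamma_\varepsilon'$, there is no sign change \emph{on} $\varGamma$, so the above construction cannot be carried out with the curve equal to $\varGamma$. Following Theorem~\ref{thm:minimal07}, one works on a sequence of nearby $\varrho_j$-minimal curves $\gamma_j$ approaching $\varGamma$ where $f$ genuinely changes sign, proves vanishing of all Taylor coefficients at points $\omega_j\in\gamma_j$ with $\re w_{0j}(\omega_j)=\im w_{0j}(\omega_j)=0$, and then passes to the limit by continuity of the symbol. Your proposal to extract Taylor coefficients directly ``at each fixed point of $\varGamma_\varrho'$'' cannot be implemented without this indirection. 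Finally, the paper's quantitative mechanism for extracting one Taylor coefficient at a time is not a lower bound on $\|R^*v_\tau\|_{(-N)}$ per se but a pairing $I_\tau=(R^*v_\tau,\overline{h_\tau})$ against the rescaled bump $h_\tau(t,x)=\tau^{-N}H(\tau(t-t_0),\tau(x-y(t)))$, combined with the total order $>_t$ on Taylor coefficients (Definition~\ref{defordering}) and a careful choice of both $\phi_0$ and $H$ to isolate a single monomial; your ``polynomial prefactor'' idea points in this direction but would need the same bookkeeping to avoid contamination from lower-order terms, which is precisely what the ordering $>_t$ supplies.
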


Assuming these results for the moment, we can now show how Theorem \ref{bigthm1} follows.
\begin{proof}[End of Proof of Theorem \ref{bigthm1}.]
Recall that
\[
\varGamma=\{(x_1,0,\varepsilon_n):  a_0\le x_1\le b_0\}\subset T^\ast(\mathbb{R}^n)\smallsetminus 0.
\]
By what we have shown, it suffices to regard the case $Q=PE+R$, where we may assume that the conditions of Theorem
\ref{mainthm1} are all satisfied in a conic neighborhood $\varOmega$ of $\varGamma$,
with the exception of \eqref{eq:mainthm1} and the condition
concerning the existence of a neighborhood of \eqref{eq:mainthm11} in which
$f$ vanishes identically when $a_0<b_0$.
We consider three cases.

i) $\varGamma$ is an interval.
We then claim that condition \eqref{eq:mainthm1} 
imposes no restriction.
Indeed, if there is no neighborhood of $\varGamma$ in which \eqref{eq:mainthm1} holds,
then there exists a sequence $\{\gamma_j\}=\{(t_j,x_j',0,\xi_j')\}$ such that
$a_0\le\liminf t_j\le\limsup t_j\le b_0$, $(x_j',\xi_j')\to (0,\xi^0)\in \mathbb{R}^{2n-2}$
and
\begin{equation}\label{eq:rmk1}
f(t_j,x_j',\xi_j')=0, \quad \partial f(t_j,x_j',\xi_j') / \partial x_1 >0
\end{equation}
for each $j$.
By \eqref{eq:rmk1} we can choose a sequence $0<\delta_j\to 0$ such that
\[f(t_j-\delta_j,x_j',\xi_j')<0<f(t_j+\delta_j,x_j',\xi_j').
\]
In view of Definition \ref{def:minimal02} we must therefore have $L(\varGamma)=0$.
Since $\varGamma$ is minimal, this implies that $|\varGamma|=0$ so $\gamma_j\to\varGamma$.
Thus, if there is no neighborhood of $\varGamma$ in which \eqref{eq:mainthm1} holds, then $\varGamma$
is a point, and we will in this case use the existence of the sequence $\{\gamma_j\}$ satisfying \eqref{eq:rmk1}
to reduce
the proof of Theorem \ref{bigthm1} to Theorem \ref{mainthm2},
as demonstrated in case iii) below. In the present case however, $\varGamma$ is assumed to be
an interval, so there exists a
neighborhood $\mathcal{U}$ of $\varGamma$ in which \eqref{eq:mainthm1} holds.
We may assume that $\mathcal{U}\subset\varOmega$ and since $f$ is homogeneous of degree $1$
we may also assume that $\mathcal{U}$ is conic.

By Theorem \ref{thm:minimal07}, there exists a sequence $\{\varGamma_j\}$ of
$\varrho_j$-minimal bicharacteristic intervals
such that $\varrho_j\to 0$ and $\varGamma_j\to\varGamma$ as $j\to\infty$. For sufficiently
large $j$ we have $\varGamma_j\subset\mathcal{U}$. Hence, if
\[
\varGamma_j=\{(x_1,x_j',0,\xi_j'): a_j \leq x_1 \leq b_j\}
\]
then all the terms in the
asymptotic sum of the symbol of $R$ have vanishing Taylor coefficients on
\[
\varGamma_{\varrho_j}=\{(x_1,x_j',0,\xi_j'):  a_j+\varrho_j \leq x_1 \leq b_j-\varrho_j\}
\]
by Theorem \ref{mainthm1}. Since $\varGamma_{\varrho_j}\to\varGamma$
as $j\to\infty$, and all the terms in the
asymptotic sum of the symbol of $R$ are smooth functions, it follows that
all the terms in the
asymptotic sum of the symbol of $R$ have vanishing Taylor coefficients on
$\varGamma$. This proves Theorem \ref{bigthm1} in this case.

ii) $\varGamma$ is a point and condition \eqref{eq:mainthm1} holds. Then all the terms in the
asymptotic sum of the symbol of $R$ have vanishing Taylor coefficients on
$\varGamma$ by Theorem \ref{mainthm1}, so Theorem
\ref{bigthm1} follows.

iii) $\varGamma$ is a point and
\eqref{eq:mainthm1} is false. Let
$\{\gamma_j\}$ be the sequence satisfying \eqref{eq:rmk1}.
We then have $\{\re p, \im p\}(\gamma_j)>0$ and $p(\gamma_j)=0$
for each $j$ since $\gamma_j=(t_j,x_j',0,\xi_j')$.
For fixed $j$ we may assume that
$\gamma_j=(0,\eta)$ and use ~\cite[Theorem $21.3.3$]{ho3} to find a canonical
transformation $\chi$ together with Fourier integral operators $A, B, A_1$ and $B_1$
as in Proposition \ref{prop.26.4.4} such that $\chi(0,\varepsilon_n)=\gamma_j$,
and $BPA=D_1+ix_1D_n$ in a conic neighborhood $\varOmega$ of $\{(0,\varepsilon_n)\}$.
Repetition of the arguments above allows us to write
\begin{equation}\label{eq:rmk2}
BQA=BPAE+R(x,D_{x'}),
\end{equation}
where the range of $R$ is microlocally contained in the range of $BPA$ at some
compactly based cone $K'$ containing $\varOmega$ with $\chi(K')=K$. As before,
$E$ and $R$ have classical symbols. Then all the terms
in the asymptotic expansion of the symbol of
$R$ have vanishing Taylor coefficients at $\{(0,\varepsilon_n)\}$
by Theorem \ref{mainthm2}, and therefore all the terms
in the asymptotic expansion of the symbol of $A_1RB_1$
have vanishing Taylor coefficients at $\gamma_j$ by Lemma \ref{symbolvanishafterconjugation}
in the appendix.
Since the Fourier integral operators are chosen so that
\begin{align*}
K  \cap W\! F(A_1B-I) = \emptyset, \quad
K  \cap W\! F(AB_1-I) = \emptyset,
\end{align*}
we have
\begin{align*}
\emptyset & =K\cap W\! F(A_1BQAB_1-A_1BPAEB_1-A_1RB_1) \\
& = K\cap W\! F(Q-PAEB_1-A_1RB_1)
\end{align*}
in view of \eqref{eq:rmk2}.
Hence, all the terms
in the asymptotic expansion of the symbol of
\begin{equation}\label{eq:opej}
Q-PE_1=A_1RB_1 + S, \quad \quad W\! F(S)\cap K=\emptyset,
\end{equation}
have vanishing Taylor coefficients at $\gamma_j$
if $E_1=AEB_1$. (Strictly speaking,
the change of base variables $\gamma_j\mapsto (0,\eta)$ should be represented in
\eqref{eq:opej} by conjugation of a linear transformation $\kappa:\mathbb{R}^n\rightarrow \mathbb{R}^n$,
but this could be integrated in the Fourier integral operators
$A_1$ and $B_1$ so it has been left out since it will not affect the arguments below.)
It is clear that $E_1\in\varPsi_{\mathrm{cl}}^{0}(\mathbb{R}^n)$.

We have now shown that for each $j$ there exists an operator
$E_j\in\varPsi_{\mathrm{cl}}^{0}(\mathbb{R}^n)$ such that
all the terms
in the asymptotic expansion of the symbol of $Q-PE_j$
have vanishing Taylor coefficients at $\gamma_j$.
To construct the operator $E$ in Theorem \ref{bigthm1}, we do the following.
For each $j$, denote the symbol of $E_j$ by
\[
e^j(x,\xi)\sim\sum_{l=0}^\infty e_{-l}^j(x,\xi)
\]
where $e_{0}^j(x,\xi)$ is the principal part,
and $e_{-l}^j(x,\xi)$ is homogeneous of degree $-l$.
If $q$ is the principal symbol of $Q$, then by Proposition \ref{appthm26} in the appendix
there exists a function $e_0\in C^\infty(T^\ast(\mathbb{R}^n)\smallsetminus 0)$,
homogeneous of degree $0$, such that $q-p e_0$ has vanishing Taylor coefficients at $\varGamma$.

This argument can be repeated for lower order terms. Indeed, if
$\sigma_Q=q+q_0+\ldots$, then the term of degree $0$
in $\sigma_{Q-PE_j}$ is
\[
\sigma_0(Q-PE_j)=\tilde{q}_j-pe_{-1}^j,
\]
where (see equation \eqref{eq:vectorfields4} below)
\begin{align*}
\tilde{q}_j(x,\xi)&=q_0(x,\xi)
-p_0(x,\xi) e_0^j(x,\xi)
-\sum_k \partial_{\xi_k} p(x,\xi) D_{x_k}e_0^j(x,\xi) .
\end{align*}
We can write
\[
p(x,\xi)e_{-1}^j(x,\xi)=p(x,\xi/|\xi|)e_{-1}^j(x,\xi/|\xi|),
\]
so that $\tilde{q}_j(x,\xi)$, $p(x,\xi/|\xi|)$ and $e_{-1}^j(x,\xi/|\xi|)$ are
all homogeneous of degree $0$. Since
\[
\partial_x^\alpha\partial_\xi^\beta e_0(\varGamma)=\lim_{j\to\infty}
\partial_x^\alpha\partial_\xi^\beta e_0^j(\gamma_j)
\]
it follows by Proposition \ref{appthm26}
that there is a function $g\in C^\infty(T^\ast(\mathbb{R}^n)\smallsetminus 0)$,
homogeneous of degree $0$, such that
\begin{align*}
q_0(x,\xi)&
-p_0(x,\xi) e_0(x,\xi)
-\sum_k \partial_{\xi_k} p(x,\xi) D_{x_k} e_0(x,\xi)\\
& - p(x,\xi/|\xi|)g(x,\xi)
\end{align*}
has vanishing Taylor coefficients at $\varGamma$. Putting $e_{-1}(x,\xi)=|\xi|^{-1}g(x,\xi)$
we find that
\[
\partial_x^\alpha\partial_\xi^\beta e_{-1}(\varGamma)=\lim_{j\to\infty}
\partial_x^\alpha\partial_\xi^\beta e_{-1}^j(\gamma_j),
\]
and that
\begin{align*}
\sigma_0(Q-P\circ e_0(x,D)-P\circ e_{-1}(x,D))
\end{align*}
has vanishing Taylor coefficients at $\varGamma$.
Continuing this way we successively
obtain functions $e_m(x,\xi)\in
C^\infty(T^\ast(\mathbb{R}^n)\smallsetminus 0)$, homogeneous of degree $m$ for $m\le 0$, such that
\[
\sigma_Q-(\sum_{m=0}^M e_{-m})\sigma_P \quad \text{mod }S_{\mathrm{cl}}^{-M}
\]
has vanishing Taylor coefficients at $\varGamma$. If we let $E$ have symbol
\[
\sigma_E(x,\xi)\sim \sum_{m=0}^\infty (1-\phi(\xi))e_{-m}(x,\xi)
\]
with $\phi\in C_0^\infty$ equal to $1$ for $\xi$ close to $0$,
then $E\in\varPsi_{\mathrm{cl}}^{0}(\mathbb{R}^n)$ and all terms in the
asymptotic expansion of the symbol of $Q-PE$ have vanishing
Taylor coefficients at $\varGamma$.
This completes the proof of Theorem \ref{bigthm1}.
\end{proof}
\begin{rmk}
Instead of reducing to the study
of the normal form $P=D_{x_1}+ix_1D_{x_n}$ when condition \eqref{eq:mainthm1}
does not hold, as in case iii) above, one could show that the terms in the
asymptotic expansion of the operator $R$
given by \eqref{eq:bqadef} has vanishing Taylor coefficients at every point
in the sequence $\{\gamma_j\}$ satisfying \eqref{eq:rmk1} using techniques
very similar to those used to prove Theorem \ref{mainthm1}. Theorem \ref{bigthm1}
would then follow by continuity, but the proof of the analogue of
Theorem \ref{mainthm2} would be more involved.
In particular, we would have to construct a phase function $w$ solving the eiconal equation
\[
\partial w / \partial x_1 - if(x,\partial w/ \partial x')=0
\]
approximately instead of explicitly (confer the proofs of
Theorems \ref{mainthm1} and \ref{mainthm2}, respectively). For fixed $j$ this could be accomplished by
adapting the approach in ~\cite{ho0,ho17}
(for a brief discussion, see ~\cite[p. $83$]{ho40})
where one has $f=0$ and $\partial f/\partial x_1 >0$ at $(0,\xi^0)$
instead of at $\gamma_j$.
\end{rmk}

We shall now show how our results relates to the ones referred to in the introduction,
beginning with \eqref{eq:introeq1}. There, it sufficed to have the coefficients of $P$ and $Q$
in $C^\infty$ and $C^1$, respectively. However, in order for Theorem \ref{bigthm1} to qualify,
we must require both $P$ and $Q$ to have smooth coefficients. On the other hand,
we shall only require the equation $Pu=Qf$ to be microlocally solvable (at an appropriate cone $K$)
as given
by Definition \ref{defrange}.
Note that if $P$ is a first order differential operator on an open set $\varOmega\subset\mathbb{R}^n$,
such that
the principal symbol $p$ of $P$ satisfies condition \eqref{eqintrocond1} at a point
$(x,\xi)\in T^\ast(\varOmega)\smallsetminus 0$,
then either $\{\re p, \im p\}>0$ at $(x,\xi)$, or $\{\re p, \im p\}>0$ at $(x,-\xi)$.
(The order of the operator is not important; the statement is still true for a differential operator of order $m$,
since the Poisson bracket is
then homogeneous of order $2m-1$.)
Assuming the former, this implies that $(x,\xi)$ satisfies condition $(\mathrm{a})$ in Theorem \ref{bigthm1}
by an application of ~\cite[Theorem $21.3.3$]{ho3} and Lemma \ref{lem:minimal01}.
In order to keep the formulation of the following
result as simple as possible, we will
assume that there exists a compactly based cone $K\subset T^\ast(\varOmega)\smallsetminus 0$
with non-empty interior such that $K$
contains the appropriate point $(x,\pm\xi)$, and 
such that the equation $Pu=Qf$ is microlocally solvable at $K$.
This is clearly the case if the equation $Pu=Qf$ is locally solvable in $\varOmega$ in the
weak sense suggested by \eqref{eqintrolocsolv}.
\begin{cor}\label{cor:jmfvectorfield}
Let $\varOmega\subset\mathbb{R}^n$ be open, and let
$P(x,D)$ and $Q(x,D)$ be two first order differential operators with
coefficients in $C^\infty (\varOmega)$. Let $p$ be the principal symbol of $P$,
and let
$x_0$ be a point in $\varOmega$ such that
\begin{equation}\label{eq:coreq1}
p(x_0,\xi_0)=0, \quad \{\re p, \im p\}(x_0,\xi_0)>0
\end{equation}
for some $\xi_0\in\mathbb{R}^n$. If $K\subset T^\ast(\varOmega)\smallsetminus 0$
is a compactly based cone containing $(x_0,\xi_0)$ such that the range of $Q$
is microlocally contained in the range of $P$ at $K$, then there exists a
constant $\mu$ such that (at the fixed point $x_0$)
\begin{equation}\label{eq:coreq2}
Q^\ast(x_0,D)=\mu P^\ast (x_0,D)
\end{equation}
where $Q^\ast$ and $P^\ast$ are
the adjoints of $Q$ and $P$.
\end{cor}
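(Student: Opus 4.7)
The plan is to reduce to Theorem \ref{bigthm1}. As noted in the paragraph preceding the statement, the hypothesis \eqref{eq:coreq1} together with ~\cite[Theorem $21.3.3$]{ho3} and Lemma \ref{lem:minimal01} identifies $(x_0,\xi_0)$ as a minimal characteristic point of $p$, so case (a) of Theorem \ref{bigthm1} applies (after shrinking $K$ if necessary to a conic neighborhood of $(x_0,\xi_0)$). This yields $E \in \varPsi_{\mathrm{cl}}^0(\varOmega)$ with principal symbol $e_0$ and lower order homogeneous terms $e_{-1}, e_{-2}, \ldots$ such that every term in the asymptotic expansion of $\sigma_{Q-PE}$ has vanishing Taylor coefficients at $(x_0,\xi_0)$; the remaining task is to read off from this the proportionality of the two adjoint operators at $x_0$.

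Writing $\sigma_P = p_1 + p_0$ with $p_1(x,\xi) = \sum_j a_j(x)\xi_j$, and $\sigma_Q = q_1 + q_0$ similarly with coefficients $b_j$, the composition formula $\sigma_{A\circ B} \sim \sum_\alpha \partial_\xi^\alpha \sigma_A \cdot D_x^\alpha \sigma_B /\alpha!$ terminates at $|\alpha|\le 1$ when $A = P$, giving the homogeneous components
\[
(\sigma_{PE})_1 = p_1 e_0, \qquad (\sigma_{PE})_0 = p_1 e_{-1} + p_0 e_0 + \sum_j a_j D_{x_j} e_0.
\]
Setting $\mu_0 := e_0(x_0,\xi_0)$, the vanishing of $\partial_{\xi_k}(q_1 - p_1 e_0)$ at $(x_0,\xi_0)$ forces $b_k(x_0) = \mu_0\, a_k(x_0)$, while the vanishing of $\partial_{\xi_k}\partial_{\xi_l}(q_1 - p_1 e_0)$ at $(x_0,\xi_0)$ forces
\[
a_k(x_0)\,\partial_{\xi_l}e_0(x_0,\xi_0) + a_l(x_0)\,\partial_{\xi_k}e_0(x_0,\xi_0) = 0 \quad \text{for all } k,l.
\]
Since $\{\re p, \im p\}(x_0,\xi_0) > 0$ rules out $a(x_0) = 0$, the diagonal case ($k=l$) gives $\partial_{\xi_{j_0}} e_0(x_0,\xi_0) = 0$ for any $j_0$ with $a_{j_0}(x_0) \ne 0$, and the off-diagonal entries then propagate this to $\partial_{\xi_k} e_0(x_0,\xi_0) = 0$ for every $k$.

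With $\partial_\xi e_0(x_0,\xi_0) = 0$ in hand, the vanishing of $\partial_{x_j}\partial_{\xi_l}(q_1 - p_1 e_0)$ at $(x_0,\xi_0)$ collapses to $\partial_{x_j}b_l(x_0) = \mu_0\,\partial_{x_j}a_l(x_0) + a_l(x_0)\,\partial_{x_j}e_0(x_0,\xi_0)$; setting $l = j$ and summing, then eliminating $\sum_j a_j(x_0)\,\partial_{x_j} e_0(x_0,\xi_0)$ by means of the evaluation at $(x_0,\xi_0)$ of the degree-zero identity $q_0 - p_0 e_0 - \sum_j a_j D_{x_j} e_0 = 0$, produces
\[
q_0(x_0) + i\sum_j \partial_{x_j} b_j(x_0) = \mu_0 \Bigl(p_0(x_0) + i\sum_j \partial_{x_j} a_j(x_0)\Bigr).
\]
The adjoint symbol expansion $\sigma_{P^\ast} \sim \sum_\alpha \partial_\xi^\alpha D_x^\alpha \overline{\sigma_P}/\alpha!$ also terminates at $|\alpha|\le 1$; evaluating at $x_0$ yields $P^\ast(x_0,D) = \sum_k \overline{a_k(x_0)}\, D_k + \overline{p_0(x_0)+i\sum_j \partial_{x_j}a_j(x_0)}$, and similarly for $Q^\ast(x_0,D)$. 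Comparing with the two displayed relations shows $Q^\ast(x_0,D) = \overline{\mu_0}\,P^\ast(x_0,D)$, so $\mu := \overline{e_0(x_0,\xi_0)}$ is the desired constant. The main obstacle in this plan is the extraction of $\partial_\xi e_0(x_0,\xi_0) = 0$ from the second $\xi$-derivatives of the degree-one Taylor vanishing; without it, the degree-zero comparison carries a residual $\partial_\xi e_0$-correction term that would spoil proportionality of the full adjoint polynomials, leaving only the principal parts proportional.
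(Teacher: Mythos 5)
Your proof is correct and takes essentially the same approach as the paper: reduce to Theorem~\ref{bigthm1} via the minimal-characteristic-point identification, then exploit the vanishing of the first and second $\xi$-derivatives of the degree-one component to obtain first $b_k(x_0)=\mu_0 a_k(x_0)$ and then $\partial_\xi e_0(x_0,\xi_0)=0$, which is precisely what makes the degree-zero comparison close. The only (cosmetic) difference is that the paper first transfers the vanishing of Taylor coefficients from $\sigma_{Q-PE}$ to $\sigma_{Q^\ast - E^\ast P^\ast}$ and carries out the derivative computations on the adjoint symbols, while you work directly on $\sigma_{Q-PE}$ and convert to the adjoint operators by explicit formula only at the very end; the algebra is identical.
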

\begin{proof}
By \eqref{eq:coreq1}, $P\in\varPsi_{\mathrm{cl}}^1(\varOmega)$ is an operator of principal type
microlocally near $(x_0,\xi_0)$.
$P$ and $Q$ therefore satisfy the hypotheses of Theorem \ref{bigthm1},
and in view of the discussion above regarding the point $(x,\xi)$ we find
that there exists an operator $E\in\varPsi_\mathrm{cl}^0(\varOmega)$
such that all the terms in the asymptotic expansion of the symbol of $Q-PE$
has vanishing Taylor coefficients at $(x_0,\xi_0)$. By the discussion
following equation \eqref{specialeq:asexpforadjoint} on page \pageref{specialeq:asexpforadjoint}
below, it follows that the same must hold for the adjoint $Q^\ast-E^\ast P^\ast$.
If we let $Q^\ast$ and $P^\ast$ have symbols $\sigma_{Q^\ast}(x,\xi)=q_1(x,\xi)+q_0(x)$ and
$\sigma_{P^\ast}(x,\xi)=p_1(x,\xi)+p_0(x)$,
then $E^\ast P^\ast$ has principal symbol $e_0 p_1$
if $\sigma_{E^\ast}=e_0+e_{-1}+\ldots$ denotes the symbol of $E^\ast$.
Hence
\[
\partial q_1(x_0,\xi_0)/\partial \xi_k=e_0(x_0,\xi_0)\partial p_1(x_0,\xi_0)/\partial \xi_k,
\quad 1\le k\le n,
\]
for $p_1(x_0,\xi_0)=\overline{p(x_0,\xi_0)}=0$. Since $q_1$ and $p_1$
are polynomials in $\xi$ of degree $1$, this means that at the fixed point $x_0$
we have
$q_1(x_0,\xi)=\mu p_1(x_0,\xi)$ for $\xi\in\mathbb{R}^n$ where the constant
$\mu$ is given by the value of $e_0$ at $(x_0,\xi_0)$.
Moreover,
\begin{equation}\label{eq:corvectorfields1}
\begin{aligned}
0&=\partial_{\xi_j} \partial_{\xi_k}q_1(x_0,\xi_0)\\
&=\partial_{\xi_j} e_0(x_0,\xi_0)\partial_{\xi_k} p_1(x_0,\xi_0)
+\partial_{\xi_k} e_0(x_0,\xi_0)\partial_{\xi_j} p_1(x_0,\xi_0).
\end{aligned}
\end{equation}
By
assumption, the coefficients of $p(x,D)$ do not vanish simultaneously,
so the same is true for $p_1(x,D)$.
Hence $\partial_{\xi_j} p_1(x_0,\xi_0)\ne 0$ for some $j$. Assuming this holds for
$j=1$, we find by choosing $j=k=1$ in \eqref{eq:corvectorfields1} that
$\partial_{\xi_1} e_0(x_0,\xi_0)=0$. But this immediately yields
\[
\partial_{\xi_k} e_0(x_0,\xi_0)=-\partial_{\xi_1} e_0(x_0,\xi_0) \partial_{\xi_k}p_1(x_0,\xi_0)
/\partial_{\xi_1}p_1(x_0,\xi_0)=0
\]
for $2\le k\le n$. Now
\[
\sigma_{E^\ast P^\ast}(x,\xi)\sim\sum \frac{1}{\alpha !}
\partial_\xi^\alpha \sigma_{E^\ast} \, D_x^\alpha (p_1(x,\xi)+p_0(x)),
\]
and since we have a bilinear map
\[
S_{\mathrm{cl}}^{m'} / S^{-\infty}\times S_{\mathrm{cl}}^{m''} / S^{-\infty} \ni
(a,b)\mapsto a\# b \in S_{\mathrm{cl}}^{m'+m''} / S^{-\infty}
\]
with
\[
(a\#b)(x,\xi)\sim\sum \frac{1}{\alpha !}
\partial_\xi^\alpha a(x,\xi) \, D_x^\alpha b(x,\xi),
\]
we find that the term of order $0$ in the symbol of $E^\ast P^\ast$ is
\begin{equation}\label{eq:vectorfields4}
\begin{aligned}
\sigma_0(E^\ast P^\ast)(x,\xi)&=e_{-1}(x,\xi)p_1(x,\xi)+e_0(x,\xi)p_0(x)\\
&\phantom{=}+\sum_{k=1}^n
\partial_{\xi_k}e_0(x,\xi) \, D_k p_1(x,\xi).
\end{aligned}
\end{equation}
Since $\partial_{\xi_k}e_0$ and $p_1$ vanish at $(x_0,\xi_0)$ we find that
$q_0(x_0)=\mu p_0(x_0)$ at the fixed point $x_0$, which completes the proof.
\end{proof}
Having proved this result, we immediately obtain the following after making
the obvious adjustments to
~\cite[Theorem $6.2.2$]{ho0}. The fact that we require higher regularity on the coefficients of
$Q$ then yields higher regularity on the propertionality factor. Since the
proof remains the same, it is omitted.
\begin{cor}\label{cor:jmfvectorfield2}
Let $\varOmega\subset\mathbb{R}^n$ be open, and let
$P(x,D)$ and $Q(x,D)$ be two first order differential operators with
coefficients in $C^\infty (\varOmega)$. Let $p$ be the principal symbol of $P$,
and assume that the coefficients of $p(x,D)$ do not vanish simultaneously in $\varOmega$.
If for a dense set of points $x$ in $\varOmega$ one can find $\xi\in\mathbb{R}^n$ such that
\eqref{eq:coreq1} is fulfilled, and
if for each $(x,\xi)$ there is a compactly based cone $K\subset T^\ast(\varOmega)\smallsetminus 0$
containing $(x,\xi)$ such that the range of $Q$
is microlocally contained in the range of $P$ at $K$, then there exists a
function $e\in C^\infty(\varOmega)$ such that
\begin{equation}\label{eq:coreq3}
Q(x,D)u\equiv P(x,D)(e u). 
\end{equation}
\end{cor}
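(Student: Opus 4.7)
The plan is to apply Corollary \ref{cor:jmfvectorfield} pointwise at each point of the given dense set $D\subset\varOmega$ to produce a pointwise proportionality constant $\mu(x)$, to promote this to a globally defined smooth function $e$ using the non-vanishing hypothesis on the coefficients of $p(x,D)$, and finally to verify that $e$ satisfies the operator identity \eqref{eq:coreq3}.

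Write $P(x,D) = \sum_{j=1}^n a_j(x)D_{x_j} + b(x)$ and $Q(x,D) = \sum_{j=1}^n c_j(x)D_{x_j} + d(x)$. For each $x\in D$, I would pick $\xi\in\mathbb{R}^n$ so that \eqref{eq:coreq1} holds together with a cone $K\ni (x,\xi)$ as in the hypothesis; Corollary \ref{cor:jmfvectorfield} then supplies $\mu(x)\in\mathbb{C}$ with $Q^\ast(x,D) = \mu(x)P^\ast(x,D)$ at the fixed point $x$. The principal symbols of $Q^\ast$ and $P^\ast$ are $\overline{q}$ and $\overline{p}$, so this identity reads $q(x,\xi) = e(x)p(x,\xi)$ as polynomials in $\xi$, where $e(x) := \overline{\mu(x)}$; in particular $c_j(x) = e(x)a_j(x)$ for every $j$ and every $x\in D$. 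Since by hypothesis the $a_j$ have no common zero, I would cover $\varOmega$ by open sets $U$ on each of which some fixed $a_{j_0}\neq 0$ and put $e := c_{j_0}/a_{j_0}$ on $U$. This produces a $C^\infty$ function on $U$ (the gain of regularity compared to \cite[Theorem~$6.2.2$]{ho0} comes precisely from requiring smoothness of the coefficients of \emph{both} $P$ and $Q$), and the different local definitions agree on the dense set $D$, hence everywhere by continuity; patching yields $e\in C^\infty(\varOmega)$ with $c_j = e a_j$ throughout $\varOmega$.

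It remains to verify the operator identity. A direct Leibniz calculation gives
\[
P(x,D)(eu) = e\cdot P(x,D)u + \Bigl(\sum_j a_j D_{x_j}e\Bigr)u,
\]
so that
\[
Q(x,D)u - P(x,D)(eu) = \Bigl(d - be - \sum_j a_j D_{x_j}e\Bigr)u
\]
identically in $u$. To see that the right-hand side vanishes, I would extract from the zeroth-order part $q_0(x) = \mu(x)p_0(x)$ of the adjoint identity the needed compatibility: using the formula $q_0 = \sum_j D_{x_j}\overline{c_j} + \overline{d}$ together with its analogue for $p_0$, substituting $\overline{c_j} = \mu\overline{a_j}$, and conjugating, a short computation reduces this to $d = be + \sum_j a_j D_{x_j}e$ on $D$, and hence everywhere by continuity. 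The main obstacle is purely notational — carefully tracking how conjugation interacts with the adjoint formulas at the principal and zeroth-order levels — but the computation is routine and closely parallels \cite[Theorem~$6.2.2$]{ho0}.
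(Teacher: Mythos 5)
Your argument is correct and follows essentially the route the paper has in mind, namely applying Corollary \ref{cor:jmfvectorfield} pointwise on the dense set, globalizing the proportionality factor via $e = c_{j_0}/a_{j_0}$ on a cover (this is the ``obvious adjustment'' of \cite[Theorem~$6.2.2$]{ho0}, with $C^\infty$ replacing $C^1$ because both $P$ and $Q$ now have smooth coefficients), and then checking the operator identity through the zeroth-order part of the adjoint relation. The one point worth flagging, which you handle in the right order, is that the substitution $\overline{c_j} = \mu\overline{a_j}$ inside $\sum_j D_{x_j}\overline{c_j}$ is only legitimate after $e$ has been shown to be a globally defined smooth function with $c_j = e a_j$ on all of $\varOmega$ (not merely on $D$), since one must differentiate that identity; you establish this first, so the verification step is sound.
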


In stating
Corollary \ref{cor:jmfvectorfield2}
we could replace the assumption that the coefficients of $p(x,D)$ do not vanish simultaneously in $\varOmega$
with the condition that $P$ is of principal type. 
Indeed, if $dp\ne 0$ then by a canonical transformation we find that condition
\eqref{eq:intronontrapping} holds. Since $p\ne 0$ implies $\partial_\xi p\ne 0$ by the Euler homogeneity
equation we then have $\partial_\xi p\ne 0$ everywhere, that is,
the coefficients of $p(x,D)$ do not vanish simultaneously in $\varOmega$.
The converse is obvious.

As shown in Example \ref{ex:commutator} below,
we also recover the result for higher order differential operators
mentioned in the introduction as a special case of the following corollary
to Theorem \ref{bigthm1},
although we again need to assume higher regularity
in order to apply our results.
\begin{prop}\label{thm:commutator}
Let $X$ be a smooth manifold, $P\in\varPsi_\mathrm{cl}^k(X)$ and
$Q\in\varPsi_\mathrm{cl}^{k'}(X)$ be properly supported
such that the range of $Q\circ P$ is microlocally contained in the
range of $P$ at a compactly based cone $K\subset T^\ast (X)\smallsetminus 0$.
Let $p$ and $q$ be the principal symbols of $P$ and $Q$, respectively,
and assume that $P$ is of principal type microlocally near $K$.
If $\gamma : I\rightarrow T^\ast (X)\smallsetminus 0$ is a minimal characteristic
point or a minimal bicharacteristic interval of $p$ contained in $K$
then it follows that
\begin{equation*}
H_p^m (q)
=0
\end{equation*}
for all $(x,\xi)\in \gamma(I)$ and $m\ge 1$.
\end{prop}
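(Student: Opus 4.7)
The plan is to iterate Theorem~\ref{bigthm1} on the family of iterated commutators
\begin{equation*}
\mathrm{ad}_P^m(Q) := [P, [P, \ldots, [P, Q] \ldots ]] \qquad (m \text{ copies of } P),
\end{equation*}
which are properly supported operators in $\varPsi_{\mathrm{cl}}^{m(k-1) + k'}(X)$. A short induction using the composition rule $\sigma_{\mathrm{top}}([A, B]) = -i\{\sigma_A, \sigma_B\}$ together with $\{p, \cdot\} = H_p(\cdot)$ gives
\begin{equation*}
\sigma_{\mathrm{top}}(\mathrm{ad}_P^m(Q)) = (-i)^m H_p^m(q),
\end{equation*}
which is precisely the quantity we want to detect at $\gamma(I)$.

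First I would verify by induction on $m \ge 1$ that the range of $\mathrm{ad}_P^m(Q)$ is microlocally contained in the range of $P$ at $K$. For the base case $m = 1$: if $N$ is the integer from Definition~\ref{defrange} for the pair $(P, QP)$ and $f \in H_{(N)}^{\mathrm{loc}}(X)$, the hypothesis produces $u \in \mathscr{D}'(X)$ with $K \cap W\!F(Pu - QPf) = \emptyset$, and the algebraic identity
\begin{equation*}
[P, Q]f = P(Qf - u) + (Pu - QPf)
\end{equation*}
exhibits $[P, Q]f$ in the range of $P$ microlocally at $K$, since $K \cap W\!F\bigl(P(Qf-u) - [P,Q]f\bigr) = \emptyset$. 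For the step from $m - 1$ to $m$, expand
\begin{equation*}
\mathrm{ad}_P^m(Q) f = P\,\mathrm{ad}_P^{m-1}(Q) f - \mathrm{ad}_P^{m-1}(Q)(Pf).
\end{equation*}
The first summand is trivially in the range of $P$, and the inductive hypothesis applied to $Pf$ puts the second summand in the range of $P$ modulo an error whose wavefront set avoids $K$. At each stage the Sobolev index from Definition~\ref{defrange} need only be increased by $k$, which is permitted.

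With the range condition in hand, Theorem~\ref{bigthm1} applied to the pair $(P, \mathrm{ad}_P^m(Q))$ yields an operator $E_m \in \varPsi_{\mathrm{cl}}^{(m-1)k + k' - m}(X)$ such that every homogeneous term in the asymptotic expansion of $\sigma(\mathrm{ad}_P^m(Q) - P E_m)$ has vanishing Taylor coefficients at $\gamma(I)$. Its hypotheses are met because $P$ is of principal type near $K$ and $\gamma$ is a minimal characteristic point or minimal bicharacteristic interval of $p$ contained in $K$, with injective regular projection in the interval case (automatic by the remark following Theorem~\ref{bigthm1}). Reading off the top-order piece gives that
\begin{equation*}
(-i)^m H_p^m(q) - p\,e_m, \qquad e_m := \sigma_{\mathrm{top}}(E_m),
\end{equation*}
has vanishing Taylor coefficients at $\gamma(I)$; evaluating at any $(x, \xi) \in \gamma(I)$, where $p(x, \xi) = 0$, immediately gives $H_p^m(q)(x, \xi) = 0$, which is the claim.

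The main obstacle is the bookkeeping in the inductive range step: checking uniformly in $m$ that the commutators retain the properly supported classical pseudo-differential structure needed to invoke Theorem~\ref{bigthm1}, and that the Sobolev exponent from Definition~\ref{defrange} can be chosen consistently along the induction. These are routine verifications once the role of the iterated commutators, as the operators carrying $(-i)^m H_p^m(q)$ as principal symbol, is identified.
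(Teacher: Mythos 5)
Your proposal is correct and takes essentially the same approach as the paper: the iterated commutators $\mathrm{ad}_P^m(Q)$ are exactly the paper's $R_m = [P, R_{m-1}]$, the inductive range-containment argument matches (with the right-composition observation handled inline rather than stated as a preliminary lemma), and the application of Theorem~\ref{bigthm1} followed by evaluation of the principal symbol at $\gamma(I)$ where $p$ vanishes is identical.
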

\noindent Here $H_p^m(q)$ is defined recursively
by $H_p (q)=\{p,q\}$ and $H_p^m(q)=\{p,H_p^{m-1}(q)\}$ for $m\ge 2$.
\begin{proof}
First note that if the range of $Q\in\varPsi_\mathrm{cl}^{k'}(X)$
is microlocally contained in the
range of $P\in\varPsi_\mathrm{cl}^k(X)$ at $K$
and both operators are properly supported, then it follows that
the range of $Q\circ P$ is microlocally contained in the
range of $P$ at $K$. (The converse is not true in general.)
Indeed, let $N$ be the integer given by Definition
\ref{defrange}, and let $f\in H_{(N+k)}^{\mathrm{loc}}(X)$.
Since $P:H_{(N+k)}^{\mathrm{loc}}(X)\rightarrow H_{(N)}^{\mathrm{loc}}(X)$
is continuous, we have $g=Pf\in H_{(N)}^{\mathrm{loc}}(X)$. Thus, there exists
a $u\in\mathscr{D}'(X)$ such that
\[\emptyset =K\cap W\! F(Qg-Pu)=K\cap W\! F(QPf-Pu),
\]
so the conditions of Definition
\ref{defrange} are satisfied with $N$ replaced with $N+k$.

Let $(x,\xi)\in\gamma(I)$. The range of $PQ$ is easily seen to be
microlocally contained in the range of $P$ for any properly supported
pseudo-differential operator $Q$. The assumptions of the proposition
therefore imply that the range of the commutator
\begin{equation}\label{eq:commrecurs}
R_1=P\circ Q-Q \circ P\in\varPsi_{\mathrm{cl}}^{k+k'-1}(X)
\end{equation}
is microlocally contained in the range of $P$ at $K$.
Hence, by Theorem
\ref{bigthm1} there exists
an operator $E\in\varPsi_{\mathrm{cl}}^{k'-1}(X)$ such that, in particular, the principal symbol
of $R_1-PE$ vanishes at $(x,\xi)$. If $e$ is the principal symbol of $E$, homogeneous of
degree $k'-1$, then the principal symbol of $PE$ satisfies $p(x,\xi)e(x,\xi)=0$
since $p\circ\gamma=0$. Since the principal symbol
of $R_1$ is
\begin{align*}
\sigma_{k+k'-1}(R_1)&=\frac{1}{i}\{p,q\},
\end{align*}
the result follows for $m=1$.

Let $R_m$ be defined recursively by $R_m=[P,R_{m-1}]$ for $m\ge 2$ with $R_1$
given by \eqref{eq:commrecurs}. Arguing by induction, we conclude in view of
the first paragraph of the proof
that the range of $R_m$ is microlocally contained in the
range of $P$ at $K$ for $m=1,2\ldots$ since this holds for $R_1$.
Assuming the proposition holds for some $m\ge 1$, we can repeat the
arguments above to show that the principal symbol of $R_{m+1}$
must vanish at $(x,\xi)$. Since the principal symbol of $R_{m+1}$
equals $\frac{1}{i}\{p,H_p^m(q)\}$, this completes the proof.
\end{proof}

\begin{ex}\label{ex:commutator}
Let $\varOmega\subset\mathbb{R}^n$ be open, $P(x,D)$ be a differential operator
of order $m$ with coefficients in $C^\infty(\varOmega)$,
and let $\mu$ be a function
in $C^\infty(\varOmega)$ such that the equation
\[
P(x,D)u=\mu P(x,D)f
\]
has a solution $u\in\mathscr{D}'(\varOmega)$ for every $f\in C_0^\infty(\varOmega)$.
If $p$ is the principal symbol of $P$ then it follows that
\begin{equation}\label{eq:coreq4}
\sum_{j=1}^n \partial_{\xi_j}p(x,\xi) D_{x_j} \mu(x)=0
\end{equation}
for all $x\in \varOmega$ and $\xi\in \mathbb{R}^n$ such that
\begin{equation}\label{eq:coreq5}
\{ p, \overline{p}\}(x,\xi) \neq 0, \quad p(x,\xi)=0.
\end{equation}
Indeed, if
$(x,\xi)$ satisfies \eqref{eq:coreq5} then we may assume that
\[
\{\re p,\im p\}(x,\xi)=-\frac{1}{2i}\{ p, \overline{p}\}(x,\xi)>0
\]
since otherwise we just regard
$(x,-\xi)$ instead as per the remarks preceding Corollary \ref{cor:jmfvectorfield}.
By the same discussion it is also clear that
$(x,\xi)$ is a minimal characteristic point of $p$.
Now the conditions above imply that there exists a compactly based cone $K\subset T^\ast(\varOmega)
\smallsetminus 0$
containing $(x,\xi)$ such that the range of $\mu P$ is microlocally contained in the range of $P$
at $K$.
By condition \eqref{eq:coreq5}
$P$ is of
principal type near $(x,\xi)$,
so Proposition \ref{thm:commutator} implies that $\{p,\mu\}=0$ at $(x,\xi)$,
that is,
\begin{equation*}
\sum_{j=1}^n \partial_{\xi_j}p(x,\xi) \partial_{x_j} \mu(x)
-\partial_{x_j}p(x,\xi) \partial_{\xi_j} \mu(x)=0.
\end{equation*}
Since $\mu$ is independent of $\xi$ we find that
\eqref{eq:coreq4}
holds at $(x,\xi)$.
By homogeneity it then also holds at $(x,-\xi)$.
\end{ex}

\section{Proof of Theorem \ref{mainthm2}}\label{sec:proofmainthm2}

\noindent Throughout this section we assume that the hypotheses of Theorem \ref{mainthm2}
hold. We shall prove the theorem by using Lemma \ref{lemrange1} on
approximate solutions of the equation $P^\ast v =0$ concentrated near $\varGamma'=\{(0,\varepsilon_n)\}$.
We take as starting point the construction on ~\cite[p. $103$]{ho4}, but some
modifications need to be made in particular to the amplitude function $\phi$,
so the results there concerning the estimates
for the right-hand side of \eqref{rangeeq1} cannot be used immediately. To obtain the desired estimates
we will instead
have to use ~\cite[Lemma $26.4.15$]{ho4}.
Set
\begin{equation}\label{specialeq:defapprox}
v_\tau (x)=\phi(x) e^{i\tau w(x)}
\end{equation}
where
\[w(x)=x_n+i(x_1^2+x_2^2+\ldots+x_{n-1}^2+(x_n+ix_1^2/2)^2)/2
\]
satisfies $P^*w=0$ and $\phi\in C_0^\infty(\mathbb{R}^n)$.
By the Cauchy-Kovalevsky theorem we can solve $D_1\phi-ix_1D_n\phi=0$ in a neighborhood of $0$
for any analytic initial data $\phi(0,x')=f(x')\in C^\omega(\mathbb{R}^{n-1})$;
in particular we are free to specify the Taylor coefficients of $f(x')$ at $x'=0$. We
take $\phi$ to be such a solution. If need be
we can reduce the support of $\phi$ by multiplying by a smooth cutoff function $\chi$
where $\chi$ is equal to $1$ in some
smaller neighborhood of $0$ so that $\chi\phi$ solves the equation there. We assume this to be done
and note that if $\supp\phi$ is small enough then
\begin{equation}\label{specialeq:defapprox1}
\im w(x)\geq|x|^2/4, \quad x\in \supp\phi.
\end{equation}
Since
\[
d \re w(x)=-x_1x_n d x_1 + (1-x_1^2/2) d x_n
\]
we may similarly assume that
$d \re w(x)\ne 0$ in the support of $\phi$. We then have the following result.

\begin{lem}\label{special:lemest1}
Suppose $P=D_1+ix_1D_n$ and let $v_\tau$
be defined by \eqref{specialeq:defapprox}. Then $\phi$ and $w$
can be chosen so that for any
$f\in C^\omega(\mathbb{R}^{n-1})$ and
any positive integers $k$ and $m$
we have $\phi(0,x')=f(x')$ in a neighborhood of $(0,0)$,
$\tau^k\|P^\ast v_\tau \|_{(m)}\rightarrow 0$ as $\tau\to\infty$, and
\begin{equation}\label{specialeq:lemest1}
\|v_\tau\|_{(-m)}\leq C_m\tau^{-m}.
\end{equation}
If $\tilde{\varGamma}$ is the cone generated by
\[
\{ (x,w'(x)): x\in \supp\phi, \ \im w(x)=0\}
\]
then $\tau^k v_\tau \rightarrow 0$ in $\mathscr{D}_{\tilde{\varGamma}}'$ as $\tau \to \infty$,
hence $\tau^k A v_\tau \rightarrow 0$ in $C^\infty(\mathbb{R}^n)$ if $A$ is a
pseudo-differential operator with $W\! F(A)\cap\tilde{\varGamma}=\emptyset$.
\end{lem}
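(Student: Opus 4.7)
The plan exploits the identity
\[
P^* v_\tau = (P^*\phi + \tau \phi P^* w) e^{i\tau w} = (P^*\phi) e^{i\tau w},
\]
where the second term vanishes because $P^* w = 0$, as already verified in the setup of $w$. To make $P^*\phi$ vanish near $0$, I would solve the Cauchy problem $P^*\tilde\phi = 0$, $\tilde\phi(0,x') = f(x')$ by Cauchy--Kovalevsky (applicable since $\{x_1 = 0\}$ is non-characteristic for $P^* = D_1 - ix_1 D_n$ and $f$ is analytic), obtaining an analytic solution $\tilde\phi$ in a neighborhood $U$ of $0$. Then set $\phi = \chi\tilde\phi$ with $\chi \in C_0^\infty(U)$ identically $1$ on a smaller neighborhood $V \Subset U$ of $0$ and $\supp\phi$ small enough that the bounds $\im w \ge |x|^2/4$ and $d\re w \ne 0$ hold on it. Leibniz gives $P^*\phi = (P^*\chi)\tilde\phi$, supported in $U \setminus V$ and hence in $\{|x| \ge \delta\}$ for some $\delta > 0$.

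For the first estimate, on $\supp(P^*\phi)$ the bound $\im w \ge \delta^2/4$ yields $|e^{i\tau w}| \le e^{-\tau\delta^2/4}$, while each Leibniz term in $\partial_x^\alpha((P^*\phi) e^{i\tau w})$ contributes at most a polynomial in $\tau$ of degree $|\alpha|$ times bounded derivatives of $P^*\phi$ and $w$. Hence $\|P^* v_\tau\|_{(m)} \le C_m \tau^m e^{-\tau\delta^2/4}$, which decays faster than any power of $\tau^{-1}$, and the assertion $\tau^k \|P^*v_\tau\|_{(m)} \to 0$ is immediate.

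For the Sobolev bound $\|v_\tau\|_{(-m)} \le C_m \tau^{-m}$, the plan is to apply Lemma $26.4.15$ of~\cite{ho4}, which furnishes exactly this estimate for complex-phase oscillatory functions $\phi e^{i\tau w}$ under the present hypotheses (smooth compactly supported amplitude, nonnegative imaginary part vanishing only at a point, nonvanishing real differential). A direct derivation proceeds by duality: for $u \in H_{(m)}$ with $\|u\|_{(m)}\le 1$ one integrates $(v_\tau, u) = \int \phi e^{i\tau w} \bar u\, dx$ by parts $m$ times using a first-order operator $L$ adapted to $d\re w$ with ${}^t L(e^{i\tau w}) = \tau^{-1} e^{i\tau w} + O(1)$, and absorbs the lower-order remainders via the Gaussian decay $e^{-\tau\im w}$.

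Finally, for the convergence $\tau^k v_\tau \to 0$ in $\mathscr{D}'_{\tilde\varGamma}$, weak distributional convergence follows from the preceding estimate by choosing $m > k$, so only microlocal regularity off $\tilde\varGamma$ remains. For $A \in \varPsi^0(\mathbb{R}^n)$ with $W\!F(A) \cap \tilde\varGamma = \emptyset$, represent $Av_\tau(y)$ as an oscillatory integral with phase $\tau w(x) + \langle y - x, \xi\rangle$. Stationarity in $(x,\xi)$ forces $\xi = \tau w'(x)$ at real points with $\im w(x) = 0$; the corresponding frequencies lie in $\tilde\varGamma$, hence outside $W\!F(A)$. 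Integration by parts in $(x,\xi)$ with an operator built from $\xi - \tau w'(x)$ then yields rapid decay $\|\tau^k A v_\tau\|_{C^N(K)} \to 0$ for any $k$, $N$, and compact $K$. The main obstacle is the Sobolev estimate: the complex phase makes the integration-by-parts bookkeeping delicate, but this is precisely the content of Lemma $26.4.15$ of~\cite{ho4}, which I would import rather than reprove.
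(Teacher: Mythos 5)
Your proposal is correct and takes essentially the same route as the paper: write $P^*v_\tau = (P^*\phi)e^{i\tau w}$ using $P^*w=0$, construct $\phi$ via Cauchy--Kovalevsky for $P^*\phi=0$ with prescribed analytic Cauchy data and a cutoff so that $P^*\phi$ is supported away from the zero set of $\im w$ (giving exponential decay of $\|P^*v_\tau\|_{(m)}$), and then invoke H\"ormander's Lemma $26.4.15$ for both the Sobolev bound \eqref{specialeq:lemest1} and the convergence in $\mathscr{D}'_{\tilde{\varGamma}}$. The only cosmetic slip is the constant $i$ missing in the identity $P^*(\phi e^{i\tau w})=(P^*\phi)e^{i\tau w}+i\tau\phi(P^*w)e^{i\tau w}$, which is immaterial since $P^*w=0$.
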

\noindent Here $\mathscr{D}_{\tilde{\varGamma}}'(X)=\{u\in \mathscr{D}'(X): W\! F(u)\subset \tilde{\varGamma}\}$,
equipped with the topology given by all the seminorms on $\mathscr{D}'(X)$ for the weak topology,
together with all seminorms of the form
\[P_{\phi, V, N}(u)=\sup_{\xi\in V}|\widehat{\phi u}(\xi)|(1+|\xi|)^N
\]
where $N\geq 0$, $\phi \in C_0^\infty(X)$, and $V\subset \mathbb{R}^n$ is a closed cone
with $(\supp\phi \times V)\cap \tilde{\varGamma}=\emptyset$. Note that
$u_j\rightarrow u$ in $\mathscr{D}_{\tilde{\varGamma}}'(X)$ is equivalent to
$u_j\rightarrow u$ in $\mathscr{D}'(X)$ and $Au_j\rightarrow Au$ in $C^\infty$ for
every properly supported pseudo-differential operator $A$ with $\tilde{\varGamma}\cap W\! F(A)=\emptyset$
(see the remark following ~\cite[Theorem $18.1.28$]{ho3}).
\begin{proof}
We observe that $\tau^k P^\ast v_\tau = \tau^k(P^\ast \phi)e^{i\tau w}\rightarrow 0$
in $C_0^\infty(\mathbb{R}^n)$ for any $k$ as $\tau\to\infty$, if $w$ and $\phi$
are chosen in the way given above. Hence
$\tau^k\|P^\ast v_\tau \|_{(m)}\rightarrow 0$ for any positive integers $k$ and $m$.
In view of \eqref{specialeq:defapprox1} and the fact that $d \re w\ne 0$
in the support of $\phi$ we can apply
~\cite[Lemma $26.4.15$]{ho4} to $v_\tau$. This immediately yields \eqref{specialeq:lemest1}
and also that $\tau^k v_\tau \rightarrow 0$ in $\mathscr{D}_{\tilde{\varGamma}}'$ as $\tau \to \infty$,
which proves the lemma.
\end{proof}

We are now ready to proceed with a tool that will be instrumental in proving Theorem \ref{mainthm1}. The
idea is based on techniques found in ~\cite{ho0}.

Let $R$ be the operator given by Theorem \ref{mainthm2}.
By assumption there exists a compactly based cone $K\subset T^\ast(\mathbb{R}^n)\smallsetminus 0$
such that the range of $R$ is microlocally contained in the range of $P$ at $K$.
If $N$ is the integer given by Definition \ref{defrange}, let $H(x)\in C_0^\infty(\mathbb{R}^n)$ and set
\begin{equation}\label{eq:htaudef}
h_\tau(x)=\tau^{-N}H(\tau x).
\end{equation}
Since $\hat{h}_\tau(\xi)=\tau^{-N-n}\hat{H}(\xi/\tau)$ it is clear that
for $\tau \geq 1$ we have $h_\tau\in H_{(N)}(\mathbb{R}^n)$ and $\|h_\tau\|_{(N)}\leq C\tau^{-n/2}$.
In particular, $\|h_\tau\|_{(N)}\leq C$ for $\tau \geq 1$
where the constant depends on $H$ but not on $\tau$.
Now denote by $I_\tau$
the integral
\begin{equation}\label{specialeq:integraldef}
I_\tau=\tau^n\int H(\tau x) R^\ast v_\tau(x) \, dx=\tau^{N+n}(R^\ast v_\tau, \overline{h_\tau}),
\end{equation}
where $R^\ast$ is the adjoint of $R$.
For any $\kappa$ we then have by the second equality and Lemma \ref{lemrange1} that
\begin{align*}
|I_\tau|&\le \tau^{N+n}\|h_\tau\|_{(N)} \|R^\ast v_\tau\|_{(-N)}\\
&\le C_{\kappa}\tau^{N+n}(\|P^*v_\tau\|_{(\nu)}+\|v_\tau\|_{(-N-\kappa-n)}+\|Av_\tau\|_{(0)})
\end{align*}
for some positive integer $\nu$ and properly
supported pseudo-differential operator $A$ with $W\! F(A)\cap K= \emptyset$.
By Lemma \ref{special:lemest1} this implies
\begin{equation}\label{specialeq:integraldef1}
|I_\tau | \leq C_\kappa\tau^{-\kappa}
\end{equation}
for any positive integer $\kappa$ if $\tau$ is sufficiently large.

Recall that $R(x,D_{x'})$ is a pseudo-differential operator in $x'$ depending on
$x_1$ as a parameter. Its symbol is given by the asymptotic sum
\[
\sigma_R(x,\xi')=r_{1}(x,\xi')+r_{0}(x,\xi')+\ldots
\]
where $r_{-j}(x,\xi')$ is homogeneous of degree $-j$ in $\xi'$.
The symbol of $R^\ast$ has the asymptotic expansion
\[
\sigma_{R^\ast}=\sum \partial_\xi^\alpha D_x^\alpha \overline{\sigma_R(x,\xi')}/\alpha !
\]
which shows that $R^\ast$ is also a pseudo-differential operator in $x'$ depending on $x_1$
as a parameter. If we sort the terms above with respect to homogeneity we can write
\begin{equation}\label{specialeq:asexpforadjoint}
\sigma_{R^\ast}=q_1(x,\xi')+q_{0}(x,\xi')+\ldots
\end{equation}
where $q_{-j}$ is homogeneous of order $-j$, $q_1(x,\xi')=\overline{r_1(x,\xi')}$
and
\[
q_{0}(x,\xi')=\overline{r_{0}(x,\xi')}+\sum_{k=2}^n \partial_{\xi_k}D_{x_k}\overline{r_{1}(x,\xi')}.
\]
A moments reflection shows that if all the terms in 
\eqref{specialeq:asexpforadjoint} have vanishing Taylor coefficients at some point $(x,\xi')$,
then the same must hold for $\sigma_R$.

Our goal is to show that if $q_{-j\,(\alpha)}^{(\beta)}(0,\xi^0)$ does not vanish for all $j\geq -1$ and
all $\alpha,\beta\in \mathbb{N}^n$, then \eqref{specialeq:integraldef1} cannot hold.
For this purpose, we introduce a total well-ordering $>_t$ on the Taylor
coefficients by means of an ordering of the indices $(j,\alpha,\beta)$ as follows.
\begin{dfn}\label{defordering}
Let $\alpha_i,\beta_i\in \mathbb{N}^n$ and $j_i\ge -1$ for $i=1,2$. We say that
\begin{align*}
q_{-j_1\,(\alpha_1)}^{(\beta_1)}(0,\xi^0) & >_t q_{-j_2\,(\alpha_2)}^{(\beta_2)}(0,\xi^0) \quad \text{if}\\
j_1+|\alpha_1|+|\beta_1| & > j_2+|\alpha_2|+|\beta_2|.
\end{align*}
To ``break ties'', we say that if $j_1+|\alpha_1|+|\beta_1| = j_2+|\alpha_2|+|\beta_2|$ then
\[
q_{-j_1\,(\alpha_1)}^{(\beta_1)}(0,\xi^0) >_t q_{-j_2\,(\alpha_2)}^{(\beta_2)}(0,\xi^0)
\quad \text{if }|\beta_2| > |\beta_1|.
\]
Note the reversed order. If also $|\beta_1|=|\beta_2|$ then
we use a monomial ordering on the $\beta$ index to ``break ties''.
Recall that this is any relation $>$ on $\mathbb{N}^n$
such that $>$ is a total well-ordering on $\mathbb{N}^n$ and
$\beta_1>\beta_2$ and $\gamma\in\mathbb{N}^n$ implies $\beta_1+\gamma>\beta_2+\gamma$.
Having come this far, the actual order turns out not to matter for the proof of
Theorem \ref{mainthm2}, but it will have bearing on the proof of Theorem \ref{mainthm1}.
Which monomial ordering we use on the $\beta$ index will not be important, but for completeness let
us choose lexiographic order since this will be used at a later stage in the definition.
Here we by lexiographic order refer to
the usual one, corresponding to the variables being ordered $x_1>\ldots >x_n$.
That is to say, if $\alpha_i\in\mathbb{N}^n, i=1,2$, then
$\alpha_1>_{lex}\alpha_2$ if, in the vector difference $\alpha_1-\alpha_2\in\mathbb{Z}^n$,
the leftmost nonzero entry is positive.
Thus, if $j_1+|\alpha_1|+|\beta_1| = j_2+|\alpha_2|+|\beta_2|$ and $\beta_1=\beta_2$,
then we first say that
\begin{equation}\label{eq:totalordeing1}
q_{-j_1\,(\alpha_1)}^{(\beta_1)}(0,\xi^0) >_t q_{-j_2\,(\alpha_2)}^{(\beta_2)}(0,\xi^0)
\quad \text{if }|\alpha_2| > |\alpha_1|
\end{equation}
and then
use lexiographic order on the $n$-tuples $\alpha$ 
to ``break ties'' at this stage.
Using
the lexiographic order on both multi-indices (separately)
we get
\[
q_1 <_t q_1^{(\varepsilon_n)} <_t\ldots <_t q_1^{(\varepsilon_1)} <_t q_{1 (\varepsilon_n)}
<_t \ldots <_t q_{1 (\varepsilon_1)} <_t q_0 <_t \ldots
\]
\end{dfn}
As indicated above we will prove Theorem \ref{mainthm2} by
a contradiction argument, so in the sequel we let $\kappa$ denote an integer such that
\begin{equation}\label{eq:defkappa}
j+|\alpha|+|\beta|<\kappa
\end{equation}
if $q_{-j\,(\alpha)}^{(\beta)}(0,\xi^0)$ is the first nonvanishing Taylor coefficient
with respect to the ordering $>_t$. Since $j\ge -1$ we will thus have $\kappa\ge 0$.

To simplify notation, we shall in what follows write $t$ instead of $x_1$ and $x$ instead of $x'$.
Then $v_\tau$ takes the form
\[v_\tau(t,x)=\phi(t,x)e^{i\tau w(t,x)},
\]
where
\begin{equation}\label{eqdefwt}
w(t,x)= x_{n-1}+i(t^2+x_1^2+\ldots + x_{n-2}^2+(x_{n-1}+it^2/2)^2)/2.
\end{equation}
We shall as before use the notation $\xi^0=(0,\ldots,0,1)\in \mathbb{R}^{n-1}$
when in this context.
To interpret the integral $I_\tau$ we will need a formula for how
$R^\ast(t,x,D)$ acts on the functions $v_\tau$.
This is given by the following lemma, where the parameter $t$ has been suppressed
to simplify notation.
\begin{lem}[{~\cite[Lemma $26.4.16$]{ho4}}]\label{speciallemaction}
Let $q(x,\xi)\in S^\mu(\mathbb{R}^{n-1}\times \mathbb{R}^{n-1})$,
let $\phi \in C_0^\infty (\mathbb{R}^{n-1})$, $w \in C^\infty (\mathbb{R}^{n-1})$,
and assume that $\im w>0$ except at a point
$y$ where $w'(y)=\eta \in \mathbb{R}^{n-1}\smallsetminus 0$ and $\im w''$ is positive definite. Then
\begin{equation}\label{specialeqaction} |q(x,D)(\phi e^{i\tau w} )-
\sum_{|\alpha|<k} q^{(\alpha)}(x, \tau \eta) (D-\tau\eta)^\alpha (\phi e^{i\tau w})/\alpha ! |
\leq C_k \tau^{\mu-k/2}
\end{equation} for $\tau>1$ and $k=1, 2 , \ldots$ .
\end{lem}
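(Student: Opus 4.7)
The plan is to prove the bound by Taylor expanding the symbol $q(x,\xi)$ in the fiber variable around the concentrated frequency $\tau\eta$, and then estimating the remainder by exploiting the Gaussian concentration of $\phi e^{i\tau w}$ at scale $\tau^{-1/2}$ around the critical point $y$, which is forced by $\im w''(y)$ being positive definite.

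First, I would write
$$q(x,\xi) = \sum_{|\alpha|<k}\frac{q^{(\alpha)}(x,\tau\eta)}{\alpha!}(\xi-\tau\eta)^{\alpha}+R_k(x,\xi),$$
with $R_k$ given by the usual integral form of Taylor's theorem. Quantizing the polynomial part produces exactly the sum $\sum_{|\alpha|<k}q^{(\alpha)}(x,\tau\eta)(D-\tau\eta)^{\alpha}/\alpha!$ when applied to $\phi e^{i\tau w}$, since each coefficient $q^{(\alpha)}(x,\tau\eta)$ depends only on $x$ and the polynomial $(\xi-\tau\eta)^{\alpha}$ quantizes to $(D-\tau\eta)^{\alpha}$. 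It therefore suffices to show $|R_k(x,D)(\phi e^{i\tau w})|\le C_k\tau^{\mu-k/2}$.

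Second, I would establish the pointwise bound $|(D-\tau\eta)^{\alpha}(\phi e^{i\tau w})|\le C_{\alpha}\tau^{|\alpha|/2}$. Using $(D-\tau\eta)^{\alpha}=e^{i\tau\eta\cdot x}D^{\alpha}e^{-i\tau\eta\cdot x}$ and setting $\tilde w(x)=w(x)-\eta\cdot x$, this reduces to bounding $D^{\alpha}(\phi e^{i\tau\tilde w})$, where $\tilde w'(y)=0$ and $\im \tilde w''(y)$ is positive definite. After shrinking $\supp\phi$ if needed we have $\im\tilde w(x)\ge c|x-y|^{2}$ there, so Leibniz produces a sum of terms controlled by $\tau^{|\alpha|}|x-y|^{|\beta|}e^{-c\tau|x-y|^{2}}$ with $|\beta|\le|\alpha|$, each of which is maximized at $|x-y|\sim\tau^{-1/2}$ with the value $\tau^{|\alpha|/2}$.

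Third, I would handle $R_k$. The integral form
$$R_k(x,\xi) = k\sum_{|\alpha|=k}\int_{0}^{1}\frac{(1-s)^{k-1}}{\alpha!}q^{(\alpha)}(x,\tau\eta+s(\xi-\tau\eta))\,ds\cdot(\xi-\tau\eta)^{\alpha}$$
lets me express $R_k(x,D)(\phi e^{i\tau w})$ as a finite linear combination of $s$-integrals of $B_{s}^{(\alpha)}\bigl[(D-\tau\eta)^{\alpha}(\phi e^{i\tau w})\bigr]$, where $B_{s}^{(\alpha)}$ has symbol $q^{(\alpha)}(x,\tau\eta+s\tilde\xi)\in S^{\mu-k}$; the $\tilde\xi^{\alpha}$ factor is moved onto the amplitude by integration by parts in $y$. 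Because $|\xi|\sim\tau$ on the frequency support of $\phi e^{i\tau w}$, the symbol $q^{(\alpha)}$ contributes a size $\tau^{\mu-k}$, which combined with the $\tau^{k/2}$ bound of the previous step yields the desired $\tau^{\mu-k/2}$, uniformly in $s\in[0,1]$.

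The main obstacle will be making the last step uniform in $s$ and treating the frequency regime $|\tilde\xi|\gg\tau$, where the naive symbol bound does not immediately give $\tau^{\mu-k}$. The resolution is to split the $\tilde\xi$-integration at scale $|\tilde\xi|\le\tau$, where the bound is direct, and to exploit rapid decay of the Fourier transform of $\phi e^{i\tau\tilde w}$ (gained by repeated integration by parts in $y$, using that $\im\tilde w''(y)>0$) in the complementary region, together with the standard $L^2$-continuity of pseudo-differential operators applied on the Gaussian-concentrated function $D^{\alpha}(\phi e^{i\tau\tilde w})$.
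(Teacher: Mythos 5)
The paper does not prove this lemma—it cites it directly from H\"ormander \cite[Lemma 26.4.16]{ho4}—and your proposal reproduces essentially the same argument used there: Taylor-expand $q(x,\xi)$ in $\xi$ at $\tau\eta$ with integral remainder, bound $(D-\tau\eta)^\alpha(\phi e^{i\tau w})$ pointwise by $C_\alpha\tau^{|\alpha|/2}$ using the Gaussian concentration forced by $\im w''(y)>0$, and control the remainder by splitting frequencies near and far from $\tau\eta$ (using rapid decay of $\widehat{\phi e^{i\tau w}}$ in the far region). One bookkeeping slip in your step two: after Fa\`a di Bruno/Leibniz the terms are of the form $\tau^{m}|x-y|^{m'}e^{-c\tau|x-y|^{2}}$ with the power of $\tau$ coupled to the number of first-order factors of $\tilde w'$, i.e.\ $m-m'/2\le|\alpha|/2$; as written, a fixed factor $\tau^{|\alpha|}$ paired with $|x-y|^{|\beta|}$ for arbitrary $|\beta|\le|\alpha|$ would maximize to $\tau^{|\alpha|-|\beta|/2}\ge\tau^{|\alpha|/2}$, not $\tau^{|\alpha|/2}$, so the coupling is essential—though your final conclusion $C_\alpha\tau^{|\alpha|/2}$ is correct.
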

\noindent An inspection of the proof of ~\cite[Lemma $26.4.16$]{ho4} shows that
the result is still applicable if $\im w>0$ everwhere. This is also used without mention in
~\cite{ho4} when proving the necessity of condition ($\varPsi$). Thus the statement holds
if $\im w>0$ except \emph{possibly} at a point
$y$ where $w'(y)=\eta \in \mathbb{R}^{n-1}\smallsetminus 0$ and $\im w''$ is positive definite.
We will also use this fact, but we have refrained from
altering the statement of the lemma.

Note that if $q$ is homogeneous of degree $\mu$, then the sum in \eqref{specialeqaction} consists (apart from
the factor $e^{i\tau w}$) of terms which are homogeneous in $\tau$ of degree $\mu, \mu-1, \ldots$ .
The terms of degree $\mu$ are those in
\begin{equation}\label{eq:specialeqaction1}
\phi \sum q^{(\alpha)}(x,\tau \eta)(\tau w'(x)-\tau \eta)^\alpha /\alpha !
\end{equation}
which is the Taylor expansion
at $\tau \eta$ of $q(x,\tau w')$.
In this way one can give meaning to the expression $q(x,\tau w')$ even though
$q(x,\xi)$ may not be defined for complex $\xi$.
The terms
of degree $\mu-1$ where $\phi$ is differentiated are similarly
\[ \sum_{k=1}^{n-1} q^{(k)}(x,\tau w'(x)) D_k \phi
\]
where $q^{(k)}$
should be replaced by the Taylor expansion at
$\tau \eta$ representing the value at $\tau w'(x)$, as in \eqref{eq:specialeqaction1}.
In the present case we have
\[w_x'(t,x)-\xi^0=ix-(t^2/2)\xi^0,
\]
so the expression $q_{-j}(t,x,w_x'(t,x))$ is given meaning if it is replaced by a finite
Taylor expansion
\[\sum_{\beta}q_{-j}^{(\beta)}(t,x,\xi^0)(w_x'(t,x)-\xi^0)^\beta/|\beta|! 
\]
of sufficiently high order.

Using the classicality of $R^\ast$ we have
\[
\sigma_{R^\ast}(t,x,\xi)-\sum_{j=-1}^{M} q_{-j}(t,x,\xi)\in \varPsi_{\mathrm{cl}}^{-M-1}(\mathbb{R}^{n}),
\]
so there is a symbol $a\in S_{\mathrm{cl}}^{-M-1}(\mathbb{R}^{n}\times\mathbb{R}^{n-1})$ such that
\[
a(t,x,D)=R^\ast(t,x,D)-\sum_{j=-1}^{M} q_{-j}(t,x,D) \quad \text{mod }\varPsi^{-\infty}(\mathbb{R}^{n}).
\]
By \eqref{specialeq:defapprox1} and \eqref{eqdefwt} it is clear that $w$ satisfies the conditions of
Lemma \ref{speciallemaction}, so
\begin{align*}
a(t,x,D)
v_\tau & =
a(t, x, \tau \xi^0)  v_\tau
+\mathcal{O}( \tau^{
-M-3/2})\\
& =\tau^{
-M-1} a(t, x, \xi^0)  v_\tau
+\mathcal{O}( \tau^{
-M-3/2})
\end{align*}
which implies that $|a(t,x,D) v_\tau|\leq C\tau^{-M-1}$. If we for each $-1\leq j\leq M$ write
\begin{equation*} |q_{-j}(t,x,D)
v_\tau-
\sum_{|\alpha|<k_j} q_{-j}^{(\alpha)}(t, x, \tau \xi^0) (D_x-\tau\xi^0)^\alpha v_\tau/\alpha ! |
\leq C_{k_j} \tau^{-j-k_j/2}
\end{equation*}
with $k_j=2M-2j+1$, then
\begin{align*}
R^\ast(t,x,D) 
v_\tau & =
\sum_{j=-1}^{M}\sum_{|\alpha|<k_j}
q_{-j}^{(\alpha)}(t, x, \tau \xi^0) (D_x-\tau\xi^0)^\alpha v_\tau/\alpha !\\
&\phantom{=} +\mathcal{O}( \tau^{
-M-1/2}).
\end{align*}
Now recall the discussion above
regarding the homogeneity of the terms in \eqref{specialeqaction},
and choose $M\geq \kappa$, where $\kappa$ is an integer satisfying \eqref{eq:defkappa}. Then
\begin{align*}
R^\ast(t,x,D)  
v_\tau & = 
e^{i\tau w}
\sum_{j=-1}^{M} \ \sum_{|\alpha|\leq 2M-2j} q_{-j}^{(\alpha)}(t,x,\tau w_x'(t,x) )D^\alpha \phi  \\
&= 
e^{i\tau w}
\sum_{j=-1}^{M} \sum_{|\alpha|\leq 2M-2j} \tau^{-j-|\alpha|} q_{-j}^{(\alpha)}(t,x, w_x'(t,x) )D^\alpha \phi  \\
&= 
e^{i\tau w}
\sum_{J=-1}^M \tau^{-J}\lambda_J(t,x)
\end{align*} 
with an error of order $\mathcal{O}(\tau^{-\kappa-1/2})$, where
\begin{equation}\label{eqdeflambdaspecial}
\lambda_J(t,x)= \sum_{j+|\alpha|=J}q_{-j}^{(\alpha)}(t,x, w_x'(t,x) )D^\alpha \phi
\quad \text{for }j\geq -1.
\end{equation}
As before,
$q_{-j}^{(\alpha)}(t,x, w_x'(t,x) )$ should be replaced by
a finite Taylor expansion at $\xi^0$ of sufficiently high order
representing the value at $w_x'(t,x)$. 
In view of \eqref{specialeq:integraldef}, this yields
\begin{align*}
I_\tau & =\tau^n\int H(\tau t,\tau x) e^{i\tau w(t,x)} \Big( \sum_{J=-1}^\kappa \tau^{-J}\lambda_J(t,x)
+\mathcal{O}(\tau^{-\kappa-1/2})\Big)dt \, dx.
\end{align*}

After the
change of variables
$(\tau t, \tau x) \mapsto (t,x)$
we find that
\begin{equation}\label{eqconv1}
\begin{aligned}I_\tau & 
=\int H( t, x) e^{i\tau w(t/\tau,x/\tau)} \Big( \sum_{J=-1}^\kappa \tau^{-J}\lambda_J(t/\tau,x/\tau)
\\
& \phantom{=}
\qquad \qquad \qquad \qquad +\mathcal{O}(\tau^{-\kappa-1/2})\Big)dt \, dx.
\end{aligned}
\end{equation}
To illustrate how we will proceed to prove Theorem \ref{mainthm2} by contradiction, let
us for the moment assume that $q_1(0,0,\xi^0)\neq 0$, where
$\xi^0=(0,\ldots,0,1)\in\mathbb{R}^{n-1}$.
Since
\begin{equation}\label{eq:lambdaminus1}
\begin{aligned}
\lambda_{-1}(t/\tau,x/\tau)& =\phi(t/\tau,x/\tau)
\sum_{\beta}q_1^{(\beta)}(t/\tau,x/\tau,\xi^0)\\
&\phantom{=} \times (w_x'(t/\tau,x/\tau)-\xi^0)^\beta/|\beta|!
\end{aligned}
\end{equation}
where
\begin{equation}\label{problembarn}
w_x'(t/\tau,x/\tau)-\xi^0=ix/\tau-(t^2/(2\tau^2))\xi^0=\mathcal{O}(\tau^{-1}),
\end{equation}
and \eqref{eqdefwt} implies
that
$\tau w(t/\tau,x/\tau)  \to x_{n-1}$ as $\tau \to \infty$, we obtain
\[\lim_{\tau \to \infty}I_\tau /\tau=\int H(t,x) e^{ix_{n-1}}\phi(0,0)q_1(0,0,\xi^0)dt \, dx.
\]
Since we may
choose $\phi\neq0$ at the origin, the limit above will then not be
equal to $0$ for a suitable choice of $H$. 
However, this contradicts
\eqref{specialeq:integraldef1}.

Now assume that $\partial_t^{k_0} q_{-j_0 (\alpha_0)}^{(\beta_0)}(0,0,\xi^0)$ is the first
nonvanishing Taylor coefficient with respect to the ordering $>_t$, and let
\begin{equation}\label{eq:orderfirsttaylorcoef1}
m=j_0+k_0+|\alpha_0|+|\beta_0|
\end{equation}
so that $m<\kappa$ by \eqref{eq:defkappa}.
Note that $\alpha_0,\beta_0\in\mathbb{N}^{n-1}$ and that the integer
$k_0$ accounts for derivatives in $t$ while there is no corresponding term for
derivatives in the Fourier transform of $t$ since the $q_{-j}$ are independent
of this variable. Note also that since $j_0$ is permitted to be $-1$, we
have $0\le k_0,|\alpha_0|,|\beta_0|\le m+1$.

To use our assumption we will for each term $q_{-j}^{(\beta+\gamma)}(t/\tau,x/\tau,\xi^0)$
in the
Taylor expansion of $q_{-j}^{(\gamma)}(t/\tau,x/\tau,w_x'(t/\tau,x/\tau))$
(as it appears in \eqref{eqdeflambdaspecial}) at $\xi^0$
need to consider Taylor expansions
in $t$ and $x$ at the origin.
Note that for given $j$ and $\gamma$, it suffices to consider finite
Taylor expansions of $q_{-j}^{(\gamma)}$ of order $\kappa-j-|\gamma|$
by \eqref{eqconv1} and \eqref{problembarn}.
For each $j$ and $\gamma$ we thus write
\begin{multline*}
q_{-j}^{(\gamma)}(t/\tau,x/\tau,w_x'(t/\tau,x/\tau)) = \sum_{ k+|\alpha|+|\beta|\leq \kappa-j-|\gamma|}
(\partial_t^k q_{-j \,(\alpha)}^{(\beta+\gamma)})(0,0,\xi^0)\\
\times \tau^{-k-|\alpha|}t^k x^\alpha ( w_x'(t/\tau,x/\tau)-\xi^0)^\beta/(k! |\alpha|! |\beta|!)
+\mathcal{O}(\tau^{-\kappa-1+j+|\gamma|}),
\end{multline*}
where $( w_x'(t/\tau,x/\tau)-\xi^0)^\beta$ should be interpreted by means of \eqref{problembarn}.
As we shall see, the term $(t^2/(2\tau^2))\xi^0$ will not pose any problem,
since it is $\mathcal{O}(\tau^{-2})$.
We have
\begin{multline*}
\lambda_{J}(t/\tau,x/\tau) =\sum_{j+|\gamma|=J}\ \sum_{ k+|\alpha|+|\beta|\leq \kappa-J}
(\partial_t^k q_{-j \,(\alpha)}^{(\beta+\gamma)})(0,0,\xi^0)D^\gamma \phi(t/\tau,x/\tau)\\
\times \tau^{-k-|\alpha|}t^k x^\alpha ( w_x'(t/\tau,x/\tau)-\xi^0)^\beta/(k! |\alpha|! |\beta|!)
+\mathcal{O}(\tau^{-\kappa-1+J})
\end{multline*}
where $-1\le j \le J$. If we are only interested in terms of
order $\tau^{-m}$ in \eqref{eqconv1}, we can
use the assumption that $\partial_t^k q_{-j \,(\alpha)}^{(\beta+\gamma)}(0,0,\xi^0)=0$
for all $-1\leq j+k+|\alpha|+|\beta|+|\gamma|<m$
to let the term $(t^2/(2\tau^2))\xi^0$ from \eqref{problembarn} be
absorbed by the error term in the expression above.
This yields
\begin{multline*}
\sum_{J=-1}^m \tau^{-J}
\lambda_{J}(t/\tau,x/\tau) = \sum_{ j+k+|\alpha|+|\beta|+|\gamma|=m}
(\partial_t^k q_{-j \,(\alpha)}^{(\beta+\gamma)})(0,0,\xi^0)\\
\times D^\gamma \phi(t/\tau,x/\tau) \tau^{-m}t^k x^\alpha (ix)^\beta/(k! |\alpha|! |\beta|!)
+\mathcal{O}(\tau^{-m-1}),
\end{multline*}
where we use $J=j+|\gamma|$ together with the fact that we get a factor $\tau^{-|\beta|}$ from
$( w_x'(t/\tau,x/\tau)-\xi^0)^\beta$ by \eqref{problembarn}. Thus,
\begin{multline*}
\lim_{\tau \to \infty}\tau^mI_\tau =\int H(t,x) e^{ix_{n-1}} \Big\{
\sum_{ j+k+|\alpha|+|\beta|+|\gamma|=m}
 t^k x^\alpha (ix)^\beta\\
\times (\partial_t^k q_{-j \,(\alpha)}^{(\beta+\gamma)})(0,0,\xi^0)D^\gamma \phi(0,0) /(k! |\alpha|! |\beta|!)
\Big\}
dt \, dx.
\end{multline*}
Now choose $\phi$ such that $D^{\beta_0} \phi(0,0)=1$, but $D^\gamma \phi(0,0)=0$ for all other $\gamma$
such that $|\gamma|\leq |\beta_0|$.
This is possible by the discussion following \eqref{specialeq:defapprox}. By \eqref{eq:orderfirsttaylorcoef1}
and our choice of the ordering
$>_t$, we have
$\partial_t^k q_{-j \,(\alpha)}^{(\beta+\beta_0)}(0,0,\xi^0)=0$ for all
$\beta$ such that $|\beta|>0$ as long as
$j+k+|\alpha|+|\beta|+|\beta_0|=m$.
Hence, with this choice of $\phi$, the last expression
takes the form
\begin{equation}\label{eq:orderfirsttaylorcoef2}
\begin{aligned}
\lim_{\tau \to \infty}\tau^mI_\tau =\int & H(t,x) e^{ix_{n-1}} \Big\{ 
\sum_{ j+k+|\alpha|+|\beta_0|= m}
 t^k x^\alpha \\
& \ \ \times (\partial_t^k q_{-j \,(\alpha)}^{(\beta_0)})(0,0,\xi^0) /(k! |\alpha|!)
\Big\}
dt \, dx,
\end{aligned}
\end{equation}
where as usual $j$ is allowed to be $-1$ so that $j\in[-1,m-|\beta_0|]$
in \eqref{eq:orderfirsttaylorcoef2}.
Now some of the Taylor coefficients in \eqref{eq:orderfirsttaylorcoef2}
may be zero, in particular, the expression may
well contain Taylor coefficients that
preceed $\partial_t^{k_0} q_{-j_0 \,(\alpha_0)}^{(\beta_0)}(0,0,\xi^0)$, and
those are by assumption zero.
However, we claim that if at least one of the Taylor coefficients above
are nonzero, then we may choose $H$ so that the limit is nonzero. Indeed, if that were
not the case then the expression within brackets in \eqref{eq:orderfirsttaylorcoef2}
would be a polynomial with infinitely many zeros, and thus it would have to have vanishing
coefficients. Since this violates our assumption, we conclude that the limit is nonzero.
However, this contradicts \eqref{specialeq:integraldef1}, which proves Theorem \ref{mainthm2}.

\section{Proof of Theorem \ref{mainthm1}}

\noindent
In this section we shall give the proof of Theorem \ref{mainthm1}, using
ideas taken from ~\cite{ho0} together with the approach used to prove
~\cite[Theorem $26.4.7'$]{ho4}. As in the previous section, we aim to use Lemma \ref{lemrange1} to estimate
the operator $R(x,D_{x'})$ on approximate solutions of the equation $P^\ast v=0$,
concentrated near
\begin{equation}\label{eq:defgammaprime}
\varGamma'=\{(x_1,x',0,\xi'): x_1\in I' \}\subset T^*(\mathbb{R}^n)\smallsetminus 0.
\end{equation}
The proofs will be similar, but the situation is more complicated
now which will affect the construction of the approximate solutions.
We will also have to make some adjustments to the proof of
~\cite[Theorem $26.4.7'$]{ho4} to make it work, so a lot of the details will have to be
revisited. Note that our approximate solutions will also differ slightly from the ones used to prove
~\cite[Theorem $26.4.7'$]{ho4}, so
although we will refer directly to results in ~\cite{ho4} whenever possible, the formulation
of some of these results will be affected. For a more complete description of the
approximate solutions, we refer the reader to ~\cite{ho40} or ~\cite{ho4} where their construction is
carried out in greater detail. When proving Theorem \ref{mainthm1} we may without loss of
generality assume that $x'=0,\xi'=\xi^0$ in \eqref{eq:defgammaprime}. In accordance with the notation in
the proof of Theorem \ref{bigthm1}, we shall therefore
throughout this section refer to $\varGamma'$ simply by $\varGamma$, and we will let $I'=[a_0,b_0]$.

To simplify notation we shall in what follows write $t$ instead of $x_1$ and $x$ instead of $x'$.
If $N$ is the integer given by Definition \ref{defrange}, and $n$ is the dimension,
the approximate solutions $v_\tau$ will be taken of the form
\begin{equation}\label{apsol} v_\tau (t,x)= \tau^{N+n}e^{i\tau w (t,x)}\sum_0^M \phi_j(t,x)\tau^{-j}.
\end{equation}
Here
$\phi_0, \phi_1, \ldots$ are amplitude functions,
and $w$ is a phase function that should satisfy the eiconal equation
\begin{equation}\label{eiconaleq1}
\partial w / \partial t - i f(t,x , \partial w / \partial x)=0
\end{equation} 
approximately, where $f$ is the imaginary part of the principal symbol of $P$.
We take $w$ of the form
\begin{equation}\label{eq:defw}
w(t,x)=w_0(t)+\langle x-y(t),\eta (t) \rangle +\sum_{2\leq |\alpha|\leq M} w_\alpha (t)(x-y(t))^\alpha /|\alpha|!
\end{equation}
where $M$ is a large integer to be determined later, and $x=y(t)$ is a smooth
real curve. When discussing the functions
$w_\alpha$ we shall permit us to use the
notation $\alpha=(\alpha_1, \ldots , \alpha_s)$ for a sequence of $s=|\alpha|$ indices between $1$ and the dimension
$n-1$ of the $x$ variable. $w_\alpha$ will be symmetric in these indices.
If we take $\eta(t)$ to be real valued and make sure the matrix $(\im w_{j k})$ is positive definite then
$\im w$ will have a strict minimum when $x=y(t)$ as a function of the $x$ variables.

On the curve $x=y(t)$ the eiconal equation \eqref{eiconaleq1} is reduced to
\begin{equation}\label{eq:wzero1}
w_0'(t)=\langle y'(t), \eta(t) \rangle + i f(t, y(t) , \eta(t) ),
\end{equation}
which is the only equation where $w_0$ occurs. Hence it can be used to determine
$w_0$ after $y$ and $\eta$ have been chosen. In particular
\begin{equation}\label{eq:wzero2}
d \im w_0 (t) / d t = f( t, y(t), \eta(t) ).
\end{equation}

In the proof of Theorem \ref{mainthm2} we could solve the corresponding
eiconal equation explicitly. Here this is not possible, so
our goal will instead be to make \eqref{eiconaleq1} valid apart from an error of
order $M+1$ in $x-y(t)$. Note that $f(t,x,\xi)$ is not defined for complex $\xi$, but since
\[\partial w(t,x)/\partial x_j-\eta_j(t)=\sum w_{\alpha , j}(t)(x-y(t))^\alpha / |\alpha|!
\]
\eqref{eiconaleq1}) is given meaning if $f(t,x,\partial w / \partial x)$ is replaced by the finite Taylor
expansion
\begin{equation}\label{eq:ftaylorappox1}
\sum_{|\beta|\leq M} f^{(\beta)}(t,x,\eta (t))(\partial w(t,x) / \partial x - \eta(t))^\beta / |\beta|!.
\end{equation}
To compute the coefficient of $(x-y(t))^\alpha$ in \eqref{eq:ftaylorappox1}
we just have to consider the terms with $|\beta|\leq |\alpha|$.
Since
\begin{equation*}
\begin{aligned}
\partial w / \partial t
& = w_0'-\langle y', \eta \rangle + \langle x-y,\eta'\rangle
+\sum_{2\le |\alpha|\le M} w_\alpha ' (t) (x-y)^\alpha / |\alpha|! \\
& \phantom{= \ }-\sum_k \sum_{1\leq |\alpha| \leq M-1} w_{\alpha, k}
(t)(x-y)^\alpha d y_k / d t / |\alpha|!,
\end{aligned}
\end{equation*}
the first order terms in the equation \eqref{eiconaleq1} give
\begin{equation}\label{eq:firstorderterms1}
\begin{aligned}
d \eta_j / d t & - \sum_k w_{j k} (t) d y_k / d t \\
& = i ( f_{(j)}(t, y , \eta ) 
+ \sum_k  f^{(k)} (t,y,\eta) w_{j k}(t)).
\end{aligned}
\end{equation}
Note that this is a system of $2n$ equations
\begin{equation}\tag*{$(\ref{eq:firstorderterms1})'$}
d \eta_j / d t - \sum_k \re w_{j k}(t) d y_k / d t =
-\sum_k \im w_{j k}(t) f^{(k)}(t, y,\eta),
\end{equation}
\begin{equation}\tag*{$(\ref{eq:firstorderterms1})''$}
\sum_k \im w_{j k}(t) d y_k / d t=-f_{(j)}(t,y,\eta)-\sum_k \re w_{j k}(t) f^{(k)}(t,y,\eta),
\end{equation}
since $y$ and $\eta$ are real, and
under the assumption that $\im w_{j k}$ is positive definite these equations
can be solved for $d y / d t$
and $d \eta / d t$. We observe that at a point where $f=d f = 0$ they just mean
that $d y / d t = d \eta / d t =0$.

When $2\leq |\alpha| \leq M$ we obtain a differential equation
\begin{equation}\label{eq:higherorderterms1}
d w_\alpha / dt-\sum_k w_{\alpha , k} d y_k / d t= F_\alpha(t,y,\eta, \{w_\beta\})
\end{equation}
from \eqref{eiconaleq1}. Here $F_\alpha$ is a linear combination of the derivatives of $f$
of order $|\alpha|$ or less, multiplied with polynomials in $w_\beta$ with $2\leq |\beta|\leq
|\alpha|+1$. Of course, when $|\alpha|=M$ the sum on the left-hand side of \eqref{eq:higherorderterms1}
should be dropped, and $\beta$ should satisfy $|\beta|\leq |\alpha|$ instead. Altogether
$(\ref{eq:firstorderterms1})'$, $(\ref{eq:firstorderterms1})''$ and \eqref{eq:higherorderterms1}
form a quasilinear system of differential equations with as many equations as unknowns.
Hence we have local solutions with prescribed initial data. 
According to ~\cite[pp. $105-106$]{ho4}
we can find a $c>0$ such that the equations \eqref{eq:firstorderterms1} and
\eqref{eq:higherorderterms1} with initial data
\begin{equation}\label{eq:indata1}
w_{j k}= i \delta_{j k}, \quad w_\alpha=0 \quad \text{when $2< |\alpha|\leq M$, $t=(a_0+b_0)/2$}
\end{equation}
\begin{equation}\label{eq:indata2}
y=x, \quad \eta=\xi \quad \text{when $t=(a_0+b_0)/2$}
\end{equation}
have a unique solution in $(a_0-c,b_0+c)$ for all $x$, $\xi$ with
$|x|+|\xi-\xi^0|<c$. (Here $\delta_{j k}$ is the Kronecker $\delta$.) Moreover,
\begin{enumerate}
\item[i)] $(\im w_{j k}-\delta_{j k}/2)$ is positive definite,
\item[ii)] the map
\[ (x,\xi,t)\mapsto (y,\eta,t); \quad |x|+|\xi-\xi^0|<c, \ a_0-c<t<b_0+c
\]
\end{enumerate}
is a diffeomorphism.

In the range $X_c$ of the map ii) we let $v$ denote the image of the vector field
$\partial / \partial t$ under the map. Thus $v$ is the tangent vector field of the
integral curves, and when $f=df=0$ we have $v=\partial / \partial t$. By assumption
$f=0$ implies $\partial f / \partial t\leq 0$ in a neighborhood of $\varGamma$ (see \eqref{eq:mainthm1}), so if $c$
is small enough this also holds in $X_c$. An application of ~\cite[Lemma $26.4.11$]{ho4}
now yields that $f$ must have a change of sign from $-$ to $+$ along
an integral curve of $v$ in $X_c$, for otherwise there would be no such sign change
for increasing $t$ and fixed $(x,\xi)$, and that contradicts the hypothesis in Theorem
\ref{mainthm1}. By \eqref{eq:wzero2} this means that $\im w_0(t)$ will start decreasing and
end increasing, so the minimum is attained at an interior point. We can normalize the minimum
value to zero and have then for a suitable interval of $t$ that $\im w_0>0$ at the end points
and $\im w_0=0$ at some interior point.
Since $\re w_0$ is given by \eqref{eq:wzero1} we can
at this interior point also normalize the value of $\re w_0$ to zero. This completes the proof
of ~\cite[Lemma $26.4.14$]{ho4}. However, in order to prove Theorem \ref{mainthm1}
when $a_0<b_0$ we shall need the following stronger result.

\begin{lem}\label{lem26.4.14}
Assume that the hypotheses of Theorem \ref{mainthm1} are fulfilled, the variables being denoted
$(t,x)$ now. Then given $M\in\mathbb{N}$ we can find
\begin{enumerate}
\item[i)] a curve $t\mapsto (t, y(t), 0 , \eta(t))
\in \mathbb{R}^{2n}$, $a' \leq t \leq b'$ as close to $\varGamma$ as desired,
\item[ii)] $C^\infty$ functions $w_\alpha (t)$, $2\leq |\alpha| \leq M$, with $(\im w_{jk}-\delta_{jk}/2)$
positive definite when $a'\leq t \leq b'$,
\item[iii)] a function $w_0(t)$ with $\im w_0(t)\geq 0$, $a'\leq t \leq b'$, $\im w_0(a')>0$,
$\im w_0(b')>0$ and $\re w_0(c')=\im w_0(c')=0$ for some $c'\in (a', b')$
\end{enumerate}
such that \eqref{eq:defw}
is a formal solution to \eqref{eiconaleq1} with an error of order $\mathcal{O}(|x-y(t)|^{M+1})$.
If $a_0<b_0$ then $\mathrm{iii)}$ can be improved in the sense that if
$\varrho\ge 0$ is the number given by Theorem \ref{mainthm1},
then we can for any $\varepsilon>\varrho$ find
\begin{enumerate}
\item[iii)$'$] a function $w_0(t)$ with $\im w_0(t)\geq 0$, $a'\leq t \leq b'$, $\im w_0(a')>0$,
$\im w_0(b')>0$ and $\re w_0(t)=\im w_0(t)=0$ for all $t\in [a_0+\varepsilon, b_0-\varepsilon]$.
\end{enumerate}
\end{lem}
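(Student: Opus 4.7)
My plan is to follow the construction outlined in the discussion immediately preceding the lemma: solve the quasilinear system $(\ref{eq:firstorderterms1})'$, $(\ref{eq:firstorderterms1})''$ and \eqref{eq:higherorderterms1} with the prescribed initial data at a base point $t_0$, $(y(t_0),\eta(t_0))=(x,\xi)$ near $(0,\xi^0)$, and recover $w_0$ from \eqref{eq:wzero1}. The preceding discussion already produces a smooth solution on an interval containing $[a_0,b_0]$ with $(\im w_{jk}-\delta_{jk}/2)$ positive definite, which gives ii) and the curve in i); it remains to arrange the qualitative behaviour of $w_0$ by a suitable choice of $(x,\xi)$ and $t_0$.

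For the weaker conclusion iii), which essentially reproduces \cite[Lemma $26.4.14$]{ho4}: by \cite[Lemma $26.4.11$]{ho4} combined with the Theorem \ref{mainthm1} hypothesis that $f$ changes sign from $-$ to $+$ on $x_1$-intervals arbitrarily close to $\varGamma$, one can choose $(x,\xi)$ near $(0,\xi^0)$ so that $f$ changes sign from $-$ to $+$ along the integral curve of the vector field $v$. In view of \eqref{eq:wzero2}, $\im w_0$ is then first decreasing and then increasing along the curve, attaining a strict interior minimum at some $c'$, where we normalize $\re w_0(c')=\im w_0(c')=0$.

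The main obstacle is the strengthening iii)$'$ when $a_0<b_0$, where $w_0$ must vanish on the entire segment $[a_0+\varepsilon,b_0-\varepsilon]$ rather than at a single point. I would exploit the hypothesis of Theorem \ref{mainthm1} that for every $\varepsilon>\varrho$ there is an open neighborhood $U$ of $\{(t,0,0,\xi^0):a_0+\varepsilon\le t\le b_0-\varepsilon\}$ on which $f$ vanishes identically. Take $t_0=c'\in(a_0+\varepsilon,b_0-\varepsilon)$ and pick $(x,\xi)$ close enough to $(0,\xi^0)$ that the horizontal slice $\{(t,x,0,\xi):a_0+\varepsilon\le t\le b_0-\varepsilon\}$ lies inside $U$, while (again using the sign-change hypothesis at points arbitrarily near $(0,\xi^0)$ together with \cite[Lemma $26.4.11$]{ho4}) still arranging that $f$ changes sign from $-$ to $+$ along the integral curve of $v$ through $(c',x,\xi)$. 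On $[a_0+\varepsilon,b_0-\varepsilon]$ all derivatives of $f$ of order $\le M$ vanish along this curve, so the right-hand sides of $(\ref{eq:firstorderterms1})'$, $(\ref{eq:firstorderterms1})''$ and \eqref{eq:higherorderterms1} are identically zero there; hence $y,\eta$ stay frozen at $(x,\xi)$, every $w_\alpha$ remains at its initial value, and \eqref{eq:wzero1} forces $w_0'\equiv 0$. Normalizing $w_0(c')=0$ then gives $w_0\equiv 0$ on the whole segment, and the sign change of $f$ beyond this segment, together with \eqref{eq:wzero2}, forces $\im w_0(a')>0$ and $\im w_0(b')>0$ for suitable $a'<a_0+\varepsilon$ and $b'>b_0-\varepsilon$.

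The formal solution property — that \eqref{eq:defw} satisfies \eqref{eiconaleq1} modulo $\mathcal{O}(|x-y(t)|^{M+1})$ — is automatic, since the equations \eqref{eq:wzero1}, \eqref{eq:firstorderterms1} and \eqref{eq:higherorderterms1} are by construction precisely the conditions that the first $M$ Taylor coefficients of $\partial w/\partial t - if(t,x,\partial w/\partial x)$ in powers of $x-y(t)$ vanish. The delicate point, which I expect to verify with some care, is the simultaneous requirement in iii)$'$ that the chosen integral curve remain confined to the flat neighborhood $U$ throughout $[a_0+\varepsilon,b_0-\varepsilon]$ while still exhibiting the sign change of $f$ outside — both being feasible precisely because the hypotheses of Theorem \ref{mainthm1} supply sign-changing $x_1$-intervals at points $(x,\xi)$ arbitrarily close to $(0,\xi^0)$.
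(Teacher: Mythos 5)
Your proposal is correct and follows essentially the same strategy as the paper: cite H\"ormander's Lemma $26.4.14$ for the weaker conclusion iii), and for iii)$'$ exploit the neighborhood where $f$ vanishes identically to freeze the $(y,\eta)$-flow on $I_\varepsilon=[a_0+\varepsilon,b_0-\varepsilon]$, forcing $w_0'\equiv 0$ there via \eqref{eq:wzero1}, while the sign change of $f$ outside $I_\varepsilon$ (secured by \cite[Lemma $26.4.11$]{ho4}) supplies $\im w_0>0$ at the endpoints. The only presentational difference is that the paper shrinks $c$ using the diffeomorphism $(x,\xi,t)\mapsto(y,\eta,t)$ to confine the whole range $X_c$ to the flat neighborhood, whereas you fix an initial $(x,\xi)$ and argue by uniqueness that the constant curve solves the system on $I_\varepsilon$; both resolve the ``delicate point'' you flag.
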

\begin{proof}
In view of ~\cite[Lemma $26.4.14$]{ho4} we only need to prove iii)$'$.

Let $\varepsilon>\varrho$, and let $I_\varepsilon=[a_0+\varepsilon, b_0-\varepsilon]$.
By the hypotheses of Theorem \ref{mainthm1}, there is a neighborhood $\mathcal{U}$ of
\[\varGamma_\varepsilon=\{(t,0,0,\xi^0 ) : t\in I_\varepsilon\}
\]
where $f$ vanishes identically. Take $\delta>0$ sufficiently small so that
\[t\in I_\varepsilon, \ |x|+|\xi-\xi^0|<\delta \quad \Longrightarrow
\quad (t,x,0,\xi)\in \mathcal{U}.
\]
As above we can find $c>0$ such that
the equations \eqref{eq:firstorderterms1} and
\eqref{eq:higherorderterms1} with initial data \eqref{eq:indata1} and \eqref{eq:indata2}
have a unique solution in $(a_0-c,b_0+c)$ for all $x$, $\xi$ with
$|x|+|\xi-\xi^0|<c$. Since
the map
\[ (x,\xi,t)\mapsto (y,\eta,t); \quad |x|+|\xi-\xi^0|<c, \ a_0-c<t<b_0+c
\]
is a diffeomorphism, we can choose $c$ small enough so that
if $(y,\eta,t)$ is in the range $X_c$ of this map, then
$|y|+|\eta-\xi^0|<\delta$. As we have seen, 
$f$ must change sign from $-$ to $+$
along an
integral curve of $v$ in $X_c$ if $c$ is small enough,
where in $X_c$ we denote by $v$ the image of the vector field $\partial / \partial t$ under the map.
Let this integral curve be given by
\[\gamma(t)=(t,y(t),0,\eta(t))\in \mathbb{R}^{2n}, \ a'\leq t \leq b', 
\]
for some choice of $a'$, $b'$ such that $a_0-c<a'$, $b'<b_0+c$ and
\[f(a',y(a'),\eta(a'))<0<f(b',y(b'),\eta(b')).
\]
Recall that at a point where $f=d f =0$ the equations
$(\ref{eq:firstorderterms1})'$ and $(\ref{eq:firstorderterms1})''$
imply that $d y / d t = d \eta / d t = 0$. Since $f$ vanishes identically
on $\gamma$ for $t\in I_\varepsilon$ and the function $w_0$ is determined by
\eqref{eq:wzero1}, this proves the lemma after a suitable normalization.
\end{proof}
Note that
if $\varGamma$ is a point then by
Lemma \ref{lem26.4.14}
we can obtain a sequence $\{\gamma_j\}$ of curves
\[
\gamma_j(t)=(t, y_j(t),0,\eta_j(t)), \quad a_j'\leq t \leq b_j',
\]
approaching $\varGamma$ which implies that
at $t=c_j'$ we have
\[
(c_j', y_j(c_j'),0,\eta_j(c_j'))\rightarrow \varGamma \quad \text{as } j\rightarrow \infty
\]
in $T^\ast(\mathbb{R}^n)\smallsetminus 0$, where $c_j'$ is the point where
$\re w_{0 j}=\im w_{0 j}=0$. 
Similarly, if $\varGamma$ is an interval
and $\varrho\ge 0$ is the number given by Theorem \ref{mainthm1},
then for any point $\omega$ in the interior of $\varGamma_{\varrho}$
we can use Lemma \ref{lem26.4.14}
to obtain a sequence $\{\gamma_j\}$ of curves
approaching $\varGamma$ and a sequence $\{w_{0j}\}$ of functions
such that for each $j$ there exists a point $\omega_j\in\gamma_j$
with $\omega_j=\gamma_j(t_j)$
which can be chosen so that $\re w_{0 j}(t_j)=\im w_{0 j}(t_j)=0$ and
$\omega_j\rightarrow \omega$ as $j\to \infty$.
This will be crucial in proving Theorem \ref{mainthm1}. Our strategy is to show
that all the terms in the asymptotic sum of the symbol of $R$ have vanishing Taylor coefficients at
$\omega_j$, or at $(c_j', y_j(c_j'),0,\eta_j(c_j'))$ when $\varGamma$ is a point. Theorem \ref{mainthm1}
will then follow by continuity. In what follows we will suppress the index $j$
to simplify notation.

Let $K$ and $\varOmega$ be the cones given by Theorem \ref{mainthm1}, and suppose that the function $w$
given by \eqref{eq:defw} is a formal solution
to \eqref{eiconaleq1} with an error of order $\mathcal{O}(|x-y(t)|^{M+1})$ in
a neighborhood $Y$ of
\[\{(t,0): a_0\leq t \leq  b_0 \}\subset \mathbb{R}^n
\]
with $K\subset T^*(Y)$, such that $\im w>0$ in $Y$ except on a compact non-empty subset $T$ of the curve $x=y(t)$,
with $(t_0,y(t_0))\in T$ and $w =0$ on $T$. We want to show that
all the terms in the asymptotic sum of the symbol of $R$ have vanishing Taylor coefficients at
$(t_0,y(t_0),0,\eta(t_0))$.
By part i) of Lemma \ref{lem26.4.14}
we can choose $w$ so that
\begin{equation}\label{eq:gammazero}
\varGamma_0=\{ (t,x,\partial w(t,x) / \partial t , \partial w(t,x) / \partial x ): (t,x)\in T \}
\end{equation}
is contained in $\varOmega$. This is done to ensure that if
$A$ is a given pseudo-differential operator with
wavefront set contained in the complement
of $K$, then $W\! F(A)$ does not meet the cone generated by $\varGamma_0$.

We now turn our attention to the amplitude functions $\phi_j$.
With the exception of $\phi_0$ which will be of great interest to us,
we will not be very thorough in describing them. Suffice it to say that these functions can be chosen in
such a way that if 
$P^\ast$ is the adjoint of $P$ then
\begin{equation}\label{P^*est1}\|P^\ast v_\tau\|_{(\nu)} \leq C\tau^{N+n+\nu+(1-M)/2}
\end{equation}
where $M$ is the number given by \eqref{apsol}. The procedure begins
by setting
\[ \phi_0(t,x)= \sum_{|\alpha|<M} \phi_{0\alpha}(t)(x-y(t))^\alpha
\]
with $y(t)$ as above, and having $\phi_{0\alpha}$ satisfy a certain linear system of ordinary differential equations
\begin{equation}\label{eq:constructphis}
D_t \phi_{0\alpha} + \sum_{|\beta|<M} a_{\alpha \beta}\phi_{0\beta} =0.
\end{equation}
In the same way we then successively choose $\phi_j$ and obtain
\eqref{P^*est1}. The precise details can be found in ~\cite[pp. $87-89$]{ho40},
or in ~\cite[pp. $107-110$]{ho4}.
Note that we
for any positive integer $J<M$ can solve the equations 
that determine $\phi_0$ so that at the point $(t_0,y(t_0))\in T$ we have
$D_x^\alpha \phi_0(t_0,y(t_0))=0$ for all $|\alpha|\leq J$ except for one index $\alpha$, $|\alpha|=J$.
This will be important later on.
Note also that the estimate \eqref{P^*est1} is not affected if the functions $\phi_j$
are multiplied by a cutoff function in $C_0^\infty (Y)$ which is $1$ in a neighborhood of $T$.
Since the $\phi_j$
will be irrelevant outside of $Y$ for large $\tau$ by construction,
we can in this way choose them to be supported in $Y$ so that $v_\tau \in C_0^\infty(Y)$.

Having completed the construction of the approximate solutions, we
are now ready to start to follow the proof of Theorem \ref{mainthm2}.
To get the estimates for the
right-hand side of \eqref{rangeeq1} when $v$ is an approximate solution, we shall need
the following two results. The first, corresponding to Lemma
\ref{special:lemest1}, is taken from ~\cite{ho4}.
Observe that here it is stated for our approximate
solutions which differ from those in ~\cite{ho4} by a factor of $\tau^{N+n}$, which explains the
difference in appearance.
Note also that although we will not use the lower bound for the approximate solutions,
that estimate is included so as not to alter the statement. 
\begin{lem}[{~\cite[Lemma $26.4.15$]{ho4}}]\label{lemest1}
Let $X\subset \mathbb{R}^n$ be open,
and let $v_\tau$ be defined by \eqref{apsol} where $w\in C^\infty(X)$, $\phi_j \in C_0^\infty
(X)$, $\im w \geq 0$ in $X$ and $d \re w \neq 0$. For any positive integer $m$ we then have
\begin{equation}\label{apsolestm}
\|v_\tau\|_{(-m)}\leq C\tau^{N+n-m}, \qquad \tau>1.
\end{equation}
If $\im w(t_0,x_0)=0$ and $\phi_0(t_0,x_0)\ne 0$ for some $(t_0,x_0)\in X$ then
\[
\|v_\tau\|_{(-m)}\ge c\tau^{N+n/2-m},\quad \tau>1,
\]
for some $c>0$.
If $\tilde{\varGamma}$ is the cone generated by
\[
\{ 
(t,x,\partial_t w(t,x) , \partial_x w(t,x) ):
(t,x)\in  \bigcup_{j} \supp\phi_j ,
\ \im w(t,x)=0 \}
\]
then $\tau^kv_\tau \to 0$ in $\mathscr{D}_{\tilde{\varGamma}}'$ as $\tau \to \infty$,
hence $\tau^k A v_\tau \to 0$ in $C^\infty (\mathbb{R}^n)$, if $A$ is a pseudo-differential operator
with $W\! F(A)\cap \tilde{\varGamma}=\emptyset$, and $k$ is any real number.
\end{lem}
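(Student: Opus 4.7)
The plan is to establish each of the three assertions via standard oscillatory-integral estimates for the complex phase $\tau w$. Throughout, write $v_\tau(x) = \tau^{N+n} e^{i\tau w(x)} a(x,\tau)$ with $a(x,\tau) = \sum_{j=0}^M \tau^{-j}\phi_j(x)$, which has $C^\infty$ norm uniformly bounded in $\tau \ge 1$ on a fixed compact subset of $X$; also $|e^{i\tau w}| \le 1$ everywhere since $\im w \ge 0$.

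For the upper bound \eqref{apsolestm} I would argue by duality. Because $d\re w \ne 0$ on $\supp a$, a partition of unity produces a smooth real vector field $Y$ defined on a neighborhood of $\supp a$ with $Y\re w \equiv 1$. Setting $\rho = 1 + iY\im w$, whose real part is $1$ so that $\rho$ is invertible, one checks $(i\tau\rho)^{-1} Y e^{i\tau w} = e^{i\tau w}$; integrating by parts $m$ times in
\[
\langle v_\tau,u \rangle = \tau^{N+n}\int e^{i\tau w(x)} a(x,\tau)\bar u(x)\,dx
\]
transfers an $m$-th order differential operator with bounded, compactly supported coefficients onto $a\bar u$ while pulling out the factor $\tau^{-m}$. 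Bounding the resulting integral by Cauchy--Schwarz yields $|\langle v_\tau,u\rangle| \le C\tau^{N+n-m}\|u\|_{(m)}$, and duality gives \eqref{apsolestm}.

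For the lower bound I would test against $u_\tau(x) = \chi(x-x_0) e^{i\tau w'(x_0)\cdot(x-x_0)}$ for $\chi \in C_0^\infty$ equal to $1$ near the origin; note that $\im w \ge 0$ together with $\im w(x_0) = 0$ forces $d\im w(x_0) = 0$, so $w'(x_0)$ is real. One finds $\|u_\tau\|_{(m)} \le C\tau^m$, while the phase $w(x)-w'(x_0)\cdot(x-x_0)$ appearing in $v_\tau\bar u_\tau$ has a critical point at $x_0$ with complex Hessian $w''(x_0)$ of positive semidefinite imaginary part; in the applications of the lemma below (where the construction preceding it guarantees $\im w'' - I/2$ positive definite) this Hessian is nondegenerate and stationary phase with complex-valued phase yields $|\langle v_\tau, u_\tau\rangle| \ge c\tau^{N+n/2}|\phi_0(t_0,x_0)|$, which on dividing by $\|u_\tau\|_{(m)}$ gives the stated lower bound.

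For the wavefront-set conclusion, $\tau^k v_\tau \to 0$ in $\mathscr{D}'$ is immediate from the upper bound applied with $m > N+n+k$. The seminorm control $P_{\varphi,V,N}(\tau^k v_\tau) \to 0$ is obtained by non-stationary phase applied to
\[
\widehat{\varphi v_\tau}(\xi) = \tau^{N+n}\int \varphi(x)a(x,\tau)e^{i(\tau w(x)-x\cdot\xi)}\,dx;
\]
the hypothesis $(\supp\varphi\times V)\cap\tilde{\varGamma} = \emptyset$ combined with compactness gives $|\tau w'(x) - \xi| \ge c(\tau + |\xi|)$ uniformly on the region where $\im w(x) \le \epsilon$, while $|e^{i\tau w}| \le e^{-\tau\epsilon}$ handles the complementary region. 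Repeated integration by parts with $L = (\tau\overline{w'(x)}-\xi)\cdot(-i\nabla)/|\tau w'(x)-\xi|^2$ then produces arbitrary polynomial decay in $\tau + |\xi|$, which together with the $\tau^{N+n}$ prefactor shows $P_{\varphi,V,N}(\tau^k v_\tau) \to 0$ for every $k, N$. The final clause $\tau^k A v_\tau \to 0$ in $C^\infty$ for $W\! F(A)\cap\tilde{\varGamma} = \emptyset$ then follows from the noted continuity of such pseudo-differential operators as maps $\mathscr{D}'_{\tilde{\varGamma}} \to C^\infty$. The main obstacle is the lower bound, whose exponent $\tau^{N+n/2-m}$ requires a careful stationary-phase analysis with complex-valued phase and genuinely uses the nondegeneracy of $\im w''(x_0)$.
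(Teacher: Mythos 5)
The lemma is stated in the paper with an explicit citation to Hörmander's Lemma $26.4.15$; the paper does not reprove it, so your target is Hörmander's own argument, which is carried out on the Fourier-transform side (estimating $\hat v_\tau$ directly, showing concentration near $\tau\, d\re w$ at scale $|\xi|\sim\tau$, and combining with non-stationary phase). Your approach to the upper bound via duality and the vector-field $(i\tau\rho)^{-1}Y$ integration by parts is a clean, correct reformulation of the same mechanism: each integration by parts gains $\tau^{-1}$, the transferred $m$-th order operator has bounded, compactly supported coefficients, and Cauchy--Schwarz plus density of $C_0^\infty$ in $H_{(m)}$ gives $\|v_\tau\|_{(-m)}\leq C\tau^{N+n-m}$. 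Likewise the wavefront-set estimate via the split into $\{\im w\le\varepsilon\}$ and its complement, the uniform separation $|\tau\, d\re w(x)-\xi|\ge c(\tau+|\xi|)$ on the former region (valid after shrinking $\varepsilon$ by compactness, with the extra term $\tau^2|d\im w|^2\ge 0$ only helping), and exponential decay on the latter, is essentially the argument in Hörmander; one only has to insert a smooth cutoff across $\{\im w=\varepsilon\}$ so the integration by parts runs cleanly, a routine fix.

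The genuine gap is in the lower bound. Your argument tests against $u_\tau=\chi(\cdot-x_0)e^{i\tau w'(x_0)\cdot(\cdot-x_0)}$ and invokes stationary phase with complex phase, which requires the full $(t,x)$-Hessian $w''(t_0,x_0)$ to be nondegenerate. You flag this by appealing to the construction ``$\im w''-I/2$ positive definite,'' but in the paper (see the initial data \eqref{eq:indata1} and the discussion before Lemma \ref{lem26.4.14}) that positive definiteness is only of the $x$-Hessian $(\im w_{jk})$. The $t$-direction is handled separately through $\im w_0(t)$, and in the setting of Lemma \ref{lem26.4.14}, item iii)$'$, one deliberately constructs $w$ with $\im w_0\equiv 0$ on a whole $t$-interval, so at interior points $\partial_t^2\im w_0=0$ and the full Hessian is degenerate; stationary phase in the form you invoke is not applicable there. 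The cited lemma holds without any nondegeneracy hypothesis, and the intended argument is more elementary: Taylor's theorem with $\im w\ge 0$ and $\im w(t_0,x_0)=0$ gives $\im w(x)\le C|x-x_0|^2$ near $x_0$ in all cases (no lower quadratic bound needed), so a Gaussian comparison yields $\|v_\tau\|_{L^2}\gtrsim \tau^{N+n}\tau^{-n/4}$; combining this with the fact (from non-stationary phase) that $\hat v_\tau$ has negligible mass for $|\xi|\gg\tau$, one obtains $\|v_\tau\|_{(-m)}\gtrsim\tau^{-m}\|v_\tau\|_{L^2}$, which is in fact stronger than the stated $\tau^{N+n/2-m}$ and requires no stationary phase at all. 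Since the paper explicitly remarks just after the lemma that the lower bound is not actually used, this gap does not affect anything downstream, but it should be repaired if the lemma is to be proved in the stated generality.
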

\begin{prop}\label{intest1} Assume
that the hypotheses of Theorem \ref{mainthm1} are fulfilled, the variables being denoted
$(t,x)$ now, and let $v_\tau$ be given by \eqref{apsol}, where
$w\in C^\infty(Y)$, $\phi_j\in C_0^\infty(Y)$, $\im w\geq 0$ in $Y$ and $d \re w\neq 0$.
Here $Y$ is a neighborhood of $\{(t,0): a_0\leq t \leq b_0\}$ such that 
$K\subset T^\ast(Y)$. Let $H(t,x)\in C_0^\infty(\mathbb{R} \times \mathbb{R}^{n-1})$
and set
\begin{equation}\label{smallh} h_\tau (t,x) = \tau^{-N} H ( \tau(t-t_0),\tau(x-y(t)) ),
\end{equation}
where $N$ is the positive integer given by Definition \ref{defrange}
for the operators $R$ and $P$ in Theorem \ref{mainthm1}.
Then $h_\tau \in H_{(N)}(\mathbb{R}^n)$ for all $\tau\geq 1$ and
$\|h_\tau\|_{(N)}\leq C$ where the constant depends on $H$ but not on $\tau$. Furthermore,
if $M$ is the integer given by the definition of $v_\tau$
in \eqref{apsol}
so that
\eqref{P^*est1} holds, and $I_\tau$ is the integral
\begin{equation}\label{intdef}I_\tau = ( R^\ast v_\tau , \overline{h_\tau} )
\end{equation}
where $R^\ast$ is the adjoint of $R(t,x,D)$,
then for any positive integer $\kappa$ there exists a constant $C$ such that
$|I_\tau| \leq C\tau^{-\kappa}$ if $M=M(\kappa)$ is sufficiently large.
\end{prop}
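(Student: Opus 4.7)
The plan is to estimate $|I_\tau|$ by Cauchy--Schwarz, $|I_\tau| \leq \|R^\ast v_\tau\|_{(-N)} \|h_\tau\|_{(N)}$, and then apply Lemma~\ref{lemrange1} to $v_\tau$ (which belongs to $C_0^\infty(Y)$ by construction) to split $\|R^\ast v_\tau\|_{(-N)}$ into three contributions, each of which will be controlled by a different mechanism: \eqref{P^*est1} for the $P^\ast$-term, Lemma~\ref{lemest1} for the Sobolev term, and microlocal smoothness for the $A$-term.

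For the bound on $h_\tau$, the idea is to pull the non-linearity in the $t$-dependence of $y(t)$ into a smooth diffeomorphism. Write $h_\tau = g_\tau \circ \Phi$ where $\Phi(t,x) = (t,x-y(t))$ and $g_\tau(s,z) = \tau^{-N} H(\tau(s-t_0), \tau z)$. A direct Fourier-side computation (using $\widehat{g_\tau}(\eta) = \tau^{-N-n}\widehat{H}(\cdot)(\eta/\tau)$ composed with a translation) gives $\|g_\tau\|_{(N)} \leq C \tau^{-n/2}$ for $\tau \geq 1$. Since $\Phi$ is a smooth diffeomorphism with bounded derivatives near the support of $g_\tau$, composition preserves Sobolev norms up to a constant, so $\|h_\tau\|_{(N)} \leq C\tau^{-n/2} \leq C$, proving the first claim.

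For the main bound, given $\kappa \geq 1$ the Lemma \ref{lemrange1} produces integers $\nu$ and $N$, a constant $C_\kappa$, and a properly supported $A$ with $W\! F(A) \cap K = \emptyset$ so that
\begin{equation*}
\|R^\ast v_\tau\|_{(-N)} \leq C_\kappa\bigl(\|P^\ast v_\tau\|_{(\nu)} + \|v_\tau\|_{(-N-\kappa-n)} + \|A v_\tau\|_{(0)}\bigr).
\end{equation*}
The estimate \eqref{P^*est1} bounds the first term by $C \tau^{N+n+\nu+(1-M)/2}$, which is $\leq C\tau^{-\kappa}$ once $M=M(\kappa)$ is chosen with $M \geq 1 + 2(N+n+\nu+\kappa)$; note $\nu$ depends on $\kappa$ but not on $M$, so this ordering of the choices is consistent. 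Lemma~\ref{lemest1} applied with $m=N+\kappa+n$ gives the second term $\|v_\tau\|_{(-N-\kappa-n)} \leq C\tau^{-\kappa}$. Combined with $\|h_\tau\|_{(N)} \leq C$, this yields $|I_\tau| \leq C_\kappa \tau^{-\kappa}$ modulo the $A$-term.

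The one non-routine step, and the main obstacle, is the $A$-term: Lemma~\ref{lemrange1} only guarantees $W\! F(A) \cap K = \emptyset$, whereas to apply Lemma~\ref{lemest1} and conclude $\tau^k A v_\tau \to 0$ in $C^\infty$ we need $W\! F(A) \cap \widetilde{\varGamma} = \emptyset$, where $\widetilde{\varGamma}$ is the cone generated by $\{(t,x,\partial_t w,\partial_x w): (t,x) \in \bigcup_j \supp\phi_j,\ \im w(t,x)=0\}$. This compatibility is built into the geometric construction: the $\phi_j$ are supported in $Y$ and $\im w$ vanishes only on $T$, so $\widetilde{\varGamma}$ coincides with the cone generated by the set $\varGamma_0$ from \eqref{eq:gammazero}, and part (i) of Lemma~\ref{lem26.4.14} was used precisely to choose $w$ so that $\varGamma_0 \subset \varOmega \subset K$. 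Hence $\widetilde{\varGamma} \subset K$, and $W\! F(A)\cap K = \emptyset$ forces $W\! F(A) \cap \widetilde{\varGamma} = \emptyset$. Lemma~\ref{lemest1} then gives $\|Av_\tau\|_{(0)} \leq C_\kappa \tau^{-\kappa}$, completing the proof.
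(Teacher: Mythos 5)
Your proof is correct and follows essentially the same route as the paper's: apply Lemma~\ref{lemrange1} to $v_\tau\in C_0^\infty(Y)$, bound the three terms via \eqref{P^*est1}, Lemma~\ref{lemest1}, and the geometric observation that the wavefront cone $\widetilde{\varGamma}$ of $v_\tau$ is contained in $K$ (via $\varGamma_0\subset\varOmega\subset K$, which part~(i) of Lemma~\ref{lem26.4.14} arranged). The only cosmetic difference is in the bound $\|h_\tau\|_{(N)}\leq C$, where the paper estimates $\|D_t^j D_x^\alpha h_\tau\|_{(0)}$ directly on the physical side (each derivative brings down a factor $\tau$ absorbed by $\tau^{-N}$), whereas you factor $h_\tau = g_\tau\circ\Phi$, compute $\|g_\tau\|_{(N)}\leq C\tau^{-n/2}$ on the Fourier side, and invoke invariance of $H_{(N)}$ under the diffeomorphism $\Phi(t,x)=(t,x-y(t))$; both are sound.
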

\begin{proof}
In Section \ref{sec:proofmainthm2}, one easily obtains a formula
for the Fourier transform of the corresponding function $h_\tau$ (see
\eqref{eq:htaudef} on page \pageref{eq:htaudef}) which yields the
estimates needed to show that $h_\tau\in H_{(N)}$. Here we shall instead use
the equality
\[\iint |h_\tau (t,x)|^2 d t \, d x=\tau^{-2N}\iint |H(\tau(t-t_0),\tau (x-y(t)))|^2 dt \, dx
\]
which
shows that if $\tau \geq 1$ then $D_t^j  D_x^\alpha h_\tau \in L^2(\mathbb{R}^n)$ for all
$(j, \alpha)\in \mathbb{N} \times \mathbb{N}^{n-1}$ such that $j+|\alpha|\leq N+[n/2]$.
Hence, by using the equivalent norm on $H_{(N)}(\mathbb{R}^n)$ given by
\[ \|h_\tau\|_{(N)}=\sum_{j+|\alpha|\leq N}\|D_t^j D_x^\alpha h_\tau\|_{(0)},
\]
we find that $\{h_\tau\}_{\tau\ge 1}$ is a bounded one parameter family in
$H_{(N)}(\mathbb{R}^n)$,
which proves the first assertion of the proposition.

To prove the second part, let $\kappa$ be an arbitrary positive integer,
and let $\nu$ be the positive integer given by Lemma \ref{lemrange1}
(applied to the operator $R$ instead of $Q$)
so that \eqref{rangeeq1} holds for the choice of semi-norm $\|P^\ast v\|_{(\nu)}$ in the
right-hand side. If we choose
\begin{equation}\label{eqM}
(1-M)/2 \leq -N -n -\nu -\kappa,
\end{equation}
and recall \eqref{P^*est1}, then 
\begin{equation}\label{P^*est3}
\| P^* v_\tau \|_{(\nu)} \leq C\tau^{-\kappa}.
\end{equation}
Since $\supp H$ is compact, we can find a bounded open ball containing $\supp h_\tau$ for all
$\tau \geq 1$. Hence
$h_\tau \in H_{(N)}(\mathbb{R}^n)$ has compact support
and $v_\tau \in C_0^\infty (Y)$ so the result now follows by the estimate \eqref{rangeeq4}
together with Lemma \ref{lemest1}.
\end{proof}

To shorten the notation we will from now on assume
that $t_0=0$, so that $w(0,y(0))=0$.
As in the proof of Theorem \ref{mainthm2} it suffices to show that
all terms in the asymptotic expansion of the symbol of $R^\ast$,
given by
\begin{equation*}\label{eq:asexpforadjoint}
\sigma_{R^\ast}=q_1(t,x,\xi)+q_{0}(t,x,\xi)+\ldots
\end{equation*}
with $q_j$ homogeneous of degree $j$ in $\xi$, have vanishing Taylor coefficients
at $(0,y(0),\eta(0))$.
The method will be
to argue by contradiction that if not, then Proposition \ref{intest1}
does not hold. Therefore, let us assume that
$\partial_t^{k_0} q_{-j_0 (\alpha_0)}^{(\beta_0)}(0,y(0),\eta(0))$ is the first
nonvanishing Taylor coefficient with respect to the ordering $>_t$
given by Definition
\ref{defordering}, and let
\begin{equation}\label{eq2:orderfirsttaylorcoef1}
m=j_0+k_0+|\alpha_0|+|\beta_0|.
\end{equation}
Now let $\kappa$ be a positive integer such that $m<\kappa$, and
sort the terms in $I_\tau$, given by \eqref{intdef}, with respect to homogeneity degree in $\tau$.
We can use Lemma \ref{speciallemaction}
and the classicality of
the symbol $\sigma_{R^\ast}$ to write
\begin{align*}
R^\ast(t,x,D)v_\tau
& = \sum_{j=-1}^{M'} q_{-j}(t,x,D) v_\tau  +\mathcal{O}(\tau^{N+n-M'-1}) \\
& = \sum_{j=-1}^{M'} \sum_{l=0}^M \tau^{N+n-l} q_{-j}(t,x,D) (e^{i\tau w } \phi_l)  +\mathcal{O}(\tau^{N+n-M'-1})
\end{align*}
for some large number $M'$. Note that \eqref{eqM} implies a lower bound on $M$,
but as we shall see below, we must also make sure to pick $M> 2M'+1$.
For each $j$ we then estimate $q_{-j}(t,x,D)(e^{i\tau w } \phi_l)$ using \eqref{specialeqaction}
with $k=M-1-2j$, so that
\begin{align*}q_{-j}(t,x,D)(e^{i\tau w } \phi_l) =\sum_{|\alpha|<M-1-2j} 
& q_{-j}^{(\alpha )}
(t,x,\tau \eta) (D-\tau \eta )^\alpha (\phi_l e^{i\tau w })/\alpha ! 
\end{align*}
with an error of order $\mathcal{O}(\tau^{(1-M)/2})$.
Recalling \eqref{eqM} and the discussion following Lemma \ref{speciallemaction}
regarding the homogeneity of the terms in \eqref{specialeqaction}, this yields
\begin{align}\label{eqhomo1}
R^\ast(t,x,D)v_\tau
& = \sum_{j=-1}^{M'} \sum_{l=0}^M \tau^{N+n-l} e^{i\tau w} \notag \\
& \phantom{=} \ \times  \sum_{|\alpha|<M-1-2j} 
q_{-j}^{(\alpha )}
(t,x,\tau w_x') D^\alpha \phi_l + \mathcal{O}(\tau^{-\kappa-1}) \notag \\
& = \tau^{N+n} e^{i\tau w} \sum_{j=-1}^{M'} \sum_{l=0}^M \sum_{|\alpha|<M-1-2j} \tau^{-j-|\alpha|-l} \notag \\
& \phantom{=} \ \times  q_{-j}^{(\alpha )}
(t,x, w_x') D^\alpha \phi_l 
+ \mathcal{O}(\tau^{-\kappa-1})
\end{align}
if $M'$ is sufficiently large. Note that
$\tau^{-j-|\alpha|-l} q_{-j}^{(\alpha )}
(t,x, w_x') D^\alpha \phi_l$ is now homogeneous of order $-j-|\alpha| - l$ in $\tau$, and that as before,
$q_{-j}^{(\alpha)}(t,x, w_x')$ should be replaced
by a finite Taylor expansion at $\eta$ of sufficiently high order. For each $-1 \leq J\leq \kappa$,
collect all terms of the form $\tau^{-j-|\alpha|-l} q_{-j}^{(\alpha )}
(t,x, w_x') D^\alpha \phi_l$ in \eqref{eqhomo1}
that are homogeneous of order $-J$ in $\tau$, that is,
all terms that satisfy $j+|\alpha|+l=J$ for $j\geq -1$, and $|\alpha|, l \geq 0$.
If
\begin{equation*}
\lambda_J(t,x) 
= \sum_{j+|\alpha|+l=J}
q_{-j}^{ (\alpha )}(t,x,w_x'(t,x)) 
D^\alpha \phi_l (t,x)
\end{equation*}
for the permitted values of $j$ and $l$, then
\begin{equation*}
\begin{aligned}
I_\tau = \tau^n \iint & H(\tau t, \tau(x-y(t)) ) \\
& \times \Big( e^{i\tau w(t,x)} 
\sum_{J=-1}^\kappa
\tau^{-J} \lambda_J(t,x) 
+ \mathcal{O}(\tau^{-\kappa-1}) \Big) dt \, dx.
\end{aligned}
\end{equation*}
After the change of variables $(\tau t,\tau(x-y(t)))\mapsto (t,x)$ we obtain
\begin{equation}\label{eq:intlambda}
\begin{aligned}
I_\tau = \iint & H(t, x) \Big( e^{i\tau w(t/\tau ,x/\tau + y(t/\tau ))} \sum_{J=-1}^\kappa
\tau^{-J}  \\
&\times \lambda_J(t/\tau,x/\tau+y(t/\tau))
+ \mathcal{O}(\tau^{-\kappa-1}) \Big) dt \, dx,
\end{aligned}
\end{equation}
where
\begin{equation}\label{deflambda}
\begin{aligned}
\lambda_J&(t/\tau,x/\tau+y(t/\tau)) 
= \sum_{j+|\alpha|+l=J} 
D^\alpha \phi_l (t/\tau,x/\tau+y(t/\tau))\\
& \quad \quad \quad \times q_{-j}^{ (\alpha )}(t/\tau,x/\tau+y(t/\tau),w_x'(t/\tau,x/\tau+y(t/\tau))).
\end{aligned}
\end{equation}
Recall that
$w_0(0)=0$, which together with \eqref{eq:defw} implies
\begin{align*}
i \tau 
w(t/\tau, x/\tau+y(t/\tau)) 
=it w_0'(0)
+i\langle x, \eta(t/\tau) \rangle + \mathcal{O}(\tau^{-1}).
\end{align*}
Hence
{\setlength\arraycolsep{2pt}
\begin{equation}\label{int1}
\lim_{\tau \to \infty} e^{i\tau w(t/\tau ,x/\tau + y(t/\tau ))}
=  e^{itw_0'(0)+ i\langle x, \eta (0) \rangle}.
\end{equation}}
In the sequel we shall also need
\begin{equation}\label{ordow}
\begin{aligned}
\partial w / \partial x_j (t/\tau,x/\tau & +y(t/\tau)) - \eta_j (t/\tau)\\
& =\sum_{k=1}^{n-1} w_{j,k}(t/\tau)(x_k/\tau) +\mathcal{O}(\tau^{-2}),
\end{aligned}
\end{equation}
which follows from the definition of $w$ and the fact that $w_\alpha$ is symmetric in these special indices
$\alpha$. In particular, $w_{j,k}(t)=w_{k,j}(t)$ for all $j$, $k\in [1, n-1]$.

Recall that we chose the integer $\kappa$ such that $m<\kappa$.
By Proposition \ref{intest1} there is
a constant $C$ such that
\begin{equation}\label{eq:integralest1}
|I_\tau|\leq C\tau^{-\kappa},
\end{equation}
and we shall now
show that if $\partial_t^{k_0} q_{-j_0 (\alpha_0)}^{(\beta_0)}(0,y(0),\eta(0))$ is the first
nonvanishing Taylor coefficient with respect to the ordering $>_t$, where
$m=j_0+k_0+|\alpha_0|+|\beta_0|$, then \eqref{eq:integralest1}
cannot hold. (Since we are denoting the variables by $(t,x)$ now,
the index $\alpha$ in Definition \ref{defordering} will be replaced by the pair $(k,\alpha)\in\mathbb{N}\times
\mathbb{N}^{n-1}$.) We will do this by determining the limit of $\tau^m I_\tau$ as $\tau\to\infty$.
To see what is needed, consider $\lambda_{-1}(t/\tau,x/\tau+y(t/\tau))$ and
recall that this is
\begin{align*}
q_1(t/\tau,x/\tau + y(t/\tau), w_x'(t/\tau,x/\tau +y(t/\tau))) 
\phi_0 ( t/\tau,x/\tau +y(t/\tau) )
\end{align*}
which should be regarded as
a Taylor expansion in $\xi$ of $q_1$ at $\eta(t/\tau)$ of finite order. The same applies to all the
other terms of the form
$q_{-j}^{(\alpha)}$. Note that for given
$j$ and $\alpha$, we only ever need to
consider Taylor expansions of $q_{-j}^{(\alpha)}$ of order $\kappa-j-|\alpha|$
in view of \eqref{eq:intlambda} and \eqref{ordow}. To keep things
simple, we shall first only consider $q_1$; it will be clear by symmetry what
the corresponding expressions for the other terms should be.
Thus,
\begin{equation}\label{eqtaylorexp01}
\begin{aligned}
q_1  (& t/\tau,x/\tau +y(t/\tau),w_x'(t/\tau,x/\tau+y(t/\tau))) \\
& =\sum_{|\beta|\leq \kappa+1} q_1^{(\beta)}(t/\tau,x/\tau +y(t/\tau),\eta(t/\tau))  \\ 
&\phantom{=} \times ( w_x'(t/\tau,x/\tau+y(t/\tau) )-\eta(t/\tau))^\beta / |\beta| !  +\mathcal{O}(\tau^{-\kappa-2}),
\end{aligned}
\end{equation}
which shows that to use our assumption regarding the Taylor
coefficient $\partial_t^{k_0} q_{-j_0 (\alpha_0)}^{(\beta_0)}(0,y(0),\eta(0))$,
we have to for each $\beta$
write $q_1^{(\beta)}(t/\tau,x/\tau +y(t/\tau),\eta(t/\tau))$ as a Taylor series 
at $\eta(0)$, in addition to having to expand each term as a Taylor series in $t$ and $x$.
However, it is immediate from \eqref{ordow} that
if $\beta$ is an $(n-1)-$tuple corresponding to a given differential operator $D_\xi^{\beta}$, then there is a
sequence $\tilde{\beta}=(\tilde{\beta}_1, \ldots , \tilde{\beta}_s)$ of $s=|\beta|$ indices
between $1$ and the dimension $n-1$ of the $x$ variable such that
\begin{equation}\label{eq:defgbeta}
g_\tau^\beta(t,x)=(w_x'(t/\tau,x/\tau+y(t/\tau) )-\eta(t/\tau))^\beta,
\end{equation}
as it
appears in \eqref{eqtaylorexp01}, satisfies
\[
\begin{aligned}
g_\tau^\beta(t,x) & = 
c_\beta (t/\tau, x/\tau)+\mathcal{O}(\tau^{-|\beta|-1}),
\end{aligned}
\]
where
\[
c_\beta(t/\tau,x/\tau) =
\prod_{j=1}^s \Big(
\sum_{k=1}^{n-1} w_{k,\tilde{\beta}_j}(t/\tau) x_k/\tau \Big)
\]
and $c_\beta(0,x/\tau)=\tau^{-|\beta|}c_\beta(0,x)$.
These expressions make sense if we choose the sequence $\tilde{\beta}$ to be increasing,
for then it is uniquely determined by $\beta$.
If for instance $D_\xi^\beta = -\partial^2 / \partial \xi_i\partial \xi_j$,
then $\tilde{\beta}=(i,j)$ if $i\le j$ (see the indices $\alpha$
used in connection with $w_\alpha$ in \eqref{eq:defw}). Thus \eqref{eqtaylorexp01} takes the form
\begin{equation*}
\begin{aligned}
q_1  (& t/\tau,x/\tau +y(t/\tau),w_x'(t/\tau,x/\tau+y(t/\tau))) \\
& =\sum_{|\beta|\leq \kappa+1} q_1^{(\beta)}(t/\tau,x/\tau +y(t/\tau),\eta(t/\tau))  
g_\tau^\beta(t,x) / |\beta| !  +\mathcal{O}(\tau^{-\kappa-2}),
\end{aligned}
\end{equation*}
and if we expand each term in this expression as a Taylor series at $\eta(0)$ we
obtain
\begin{equation}\label{eqtaylorexp02}
\begin{aligned}
q_1  (& t/\tau,x/\tau +y(t/\tau),w_x'(t/\tau,x/\tau+y(t/\tau))) \\
& =\sum_{|\beta|\leq \kappa+1} \ \sum_{|\gamma|\leq \kappa+1-|\beta|}
q_1^{(\beta+\gamma)}(t/\tau,x/\tau +y(t/\tau),\eta(0))  \\ 
&\phantom{=} \times  g_\tau^\beta(t,x)
( \eta(t/\tau)-\eta(0) )^\gamma / (|\beta| ! |\gamma|!)
+\mathcal{O}(\tau^{-\kappa-2})
\end{aligned}
\end{equation}
where we regard $\eta(t/\tau)-\eta(0)$ as a finite Taylor series
\[\eta'(0)t/\tau+\eta''(0)t^2/(2\tau^2)+\ldots
\]
of sufficiently high order to
maintain control of the error term in \eqref{eqtaylorexp02}.
If we for each multi-index $\beta$ let $G_\tau^\beta(t,x)$ be given by
\[
G_\tau^\beta(t,x)=\! \sum_{\gamma_1+\gamma_2=\beta} \!
( \eta(t/\tau)-\eta(0) )^{\gamma_1} g_\tau^{\gamma_2}(t,x)/(|\gamma_1|! |\gamma_2|!)
\]
for $\gamma_j\in\mathbb{N}^{n-1}$, then the required order of the Taylor expansion
$\eta(t/\tau)-\eta(0)$ will ultimately depend on $\beta$, so we can write
\begin{equation}\label{eqtaylorexp03}
\begin{aligned}
q_1  (&t/\tau,x/\tau +y(t/\tau),w_x'(t/\tau,x/\tau+y(t/\tau))) \\
& =\sum_{|\beta|\leq \kappa+1}
q_1^{(\beta)}(t/\tau,x/\tau +y(t/\tau),\eta(0)) 
G_\tau^\beta(t,x)
+\mathcal{O}(\tau^{-\kappa-2})
\end{aligned}
\end{equation}
and
we can always bound $G_\tau^\beta(t,x)$ by a constant times $\tau^{-|\beta|}$.
As it turns out,
the value of $G_\tau^\beta(t,x)$ for $|\beta|>0$ will not be important which will be evident in a moment.
For notational purposes, denote by $G_0^\beta(t,x)$ the limit of
$\tau^{|\beta|}G_\tau^\beta(t,x)$ as $\tau\to\infty$. Since $G_\tau^\beta(t,x)=1$ when $\beta=0$
it is clear that $G_0^0(t,x)=1$.

For each $\beta$ we must now write $q_1^{(\beta)}(t/\tau,x/\tau +y(t/\tau), \eta(0))$ as a Taylor expansion
in $t$ and $x$ at $0$ and $y(0)$, respectively. As before,
for given $j$ and $\alpha$, we will only have to consider Taylor expansions of $q_{-j}^{(\alpha)}$ of order
$\kappa-j-|\alpha|$. By \eqref{deflambda} and \eqref{eqtaylorexp03} we have
\begin{equation}\label{deflambda1}
\begin{aligned}
\lambda_{-1}&
(t/\tau,x/\tau +y(t/\tau))=
\sum_{k+|\alpha|+|\beta|\leq \kappa+1} \phi_0(t/\tau,x/\tau+y(t/\tau))\\
& \quad \quad \times \Big\{(t/\tau)^k (x/\tau+y(t/\tau)-y(0))^\alpha G_\tau^\beta(t,x) \\
& \quad \quad \times \partial_t^k q_{1 (\alpha)}^{(\beta)}(0,y(0),\eta(0))/(k! |\alpha|!) 
+\mathcal{O}(\tau^{-\kappa-2})\Big\}
\end{aligned}
\end{equation}
where we in $(x/\tau+y(t/\tau)-y(0))^\alpha$ regard $y(t/\tau)-y(0)$
as a finite Taylor series of sufficiently high order to
maintain control of the error terms.

In the same way as we obtained the expression \eqref{deflambda1}
for the term $q_{1}(t/\tau,x/\tau+y(t/\tau),w_x'(t/\tau,x/\tau+y(t/\tau)))$,
we can now obtain similar expressions of appropriate order for all the terms
$q_{-j}^{ (\gamma )}(t/\tau,x/\tau+y(t/\tau),w_x'(t/\tau,x/\tau+y(t/\tau)))$
that appear in \eqref{deflambda}. For each $j$ and $\gamma$ we have
\begin{equation}\label{eq:qjs}
\begin{aligned}
q_{-j}^{ (\gamma )}&
(t/\tau,x/\tau+y(t/\tau),w_x'(t/\tau,x/\tau+y(t/\tau)))\\
&=\sum_{k+|\alpha|+|\beta|\leq \kappa-j-|\gamma|} (t/\tau)^k 
(x/\tau+y(t/\tau)-y(0))^\alpha G_\tau^\beta(t,x) \\
& \quad \quad \times \partial_t^k q_{-j (\alpha)}^{(\beta+\gamma)}(0,y(0),\eta(0))/(k! |\alpha|!) 
+\mathcal{O}(\tau^{-\kappa-1+j+|\gamma|}).
\end{aligned}
\end{equation}
This together with \eqref{deflambda} gives
\begin{equation}\label{eq:lambdaJs}
\begin{aligned}
\lambda_{J}&
(t/\tau,x/\tau+y(t/\tau) ) = \sum_{j+l+|\gamma|=J} \
\sum_{k+|\alpha|+|\beta|\leq \kappa-j-|\gamma|} (t/\tau)^k \\
& \times (x/\tau+y(t/\tau)-y(0))^\alpha G_\tau^\beta(t,x) D_x^\gamma \phi_l(t/\tau,x/\tau+y(t/\tau)) \\
& \times \partial_t^k q_{-j (\alpha)}^{(\beta+\gamma)}(0,y(0),\eta(0))/(k! |\alpha|!) 
+\mathcal{O}(\tau^{-\kappa-1+j+|\gamma|})
\end{aligned}
\end{equation}
where $-1\leq j\leq J$ and $l\geq 0$.
Using the fact that
by assumption the Taylor coefficients
$\partial_t^k q_{-j (\alpha)}^{(\beta+\gamma)}(0,y(0),\eta(0))$ vanish for all
$-1\leq j+k+|\alpha|+|\beta|+|\gamma|<m$, and
\[
\tau^{-J-k-|\alpha|}=\tau^{|\beta|}\tau^{-j-k-|\alpha|-|\beta|-|\gamma|-l}
\]
when $J=j+l+|\gamma|$, \eqref{eq:lambdaJs} yields
\begin{equation*}\label{eq:lambdaJs2}
\begin{aligned}
\sum_{J=-1}^m\tau^{-J}\lambda_{J}&
(t/\tau,x/\tau+y(t/\tau) ) =  \sum_{j+l+|\gamma|=-1}^m \
\sum_{j+k+|\alpha|+|\beta|+|\gamma|=m} \tau^{-m-l}\\
&\quad  \times t^k(x+y'(0)t)^\alpha \tau^{|\beta|}G_\tau^\beta(t,x)
D_x^\gamma \phi_l(t/\tau,x/\tau+y(t/\tau))\\
& \quad \times \partial_t^k q_{-j (\alpha)}^{(\beta+\gamma)}(0,y(0),\eta(0))/(k! |\alpha|!) 
+\mathcal{O}(\tau^{-m-1-l}),
\end{aligned}
\end{equation*}
where $\tau^{|\beta|}G_\tau^\beta(t,x)\to G_0^\beta (t,x)$ as $\tau\to\infty$.
As we can see, the expression above is $\mathcal{O}(\tau^{-m-1})$ as soon as
$l>0$, so in view of \eqref{eq:intlambda} and \eqref{int1}
we obtain
\begin{equation}\label{eq:lambdaJs3}
\begin{aligned}
\lim_{\tau\to\infty} \tau^m I_\tau& =\iint H(t,x)e^{itw_0'(0)+ i\langle x, \eta (0) \rangle}
\Big\{
\sum_{j+k+|\alpha|+|\beta|+|\gamma|=m} t^k\\
&\quad \quad \times (x+y'(0)t)^\alpha G_0^\beta(t,x) D_x^\gamma \phi_0(0,y(0))\\
& \quad \quad \times \partial_t^k q_{-j (\alpha)}^{(\beta+\gamma)}(0,y(0),\eta(0))/(k! |\alpha|!) \Big\}
d t \, d x.
\end{aligned}
\end{equation}
Recall \eqref{eq2:orderfirsttaylorcoef1} and choose
$\phi_0$ such that $D_x^{\beta_0}\phi_0(0,y(0))=1$, but so that $D_x^\gamma \phi_0(0,y(0))=0$ for all
other $\gamma$ such that $|\gamma|\leq |\beta_0|$ (see \eqref{eq:constructphis}).
By the choice of our ordering $>_t$ we have $\partial_t^k q_{-j (\alpha)}^{(\beta+\beta_0)}
(0,y(0),\eta(0))=0$ for all $\beta$ such that $|\beta|>0$ as long as $j+k+|\alpha|+|\beta|+|\beta_0|=m$.
Hence, with this choice of $\phi_0$, \eqref{eq:lambdaJs3} takes the form
\begin{equation}\label{eq:lambdaJs4}
\begin{aligned}
\lim_{\tau\to\infty} \tau^m I_\tau& =\iint H(t,x)e^{itw_0'(0)+ i\langle x, \eta (0) \rangle}
\Big\{
\sum_{j+k+|\alpha|+|\beta_0|=m} t^k\\
& \times (x+y'(0)t)^\alpha 
\partial_t^k q_{-j (\alpha)}^{(\beta_0)}(0,y(0),\eta(0))/(k! |\alpha|!) \Big\}
d t \, d x,
\end{aligned}
\end{equation}
so as promised, the value of $G_0^\beta(t,x)$ for $|\beta|>0$ does not matter.
(Note that $G_0^0(t,x)$ is present in \eqref{eq:lambdaJs4} as the constant factor 1.)
As in the proof of Theorem \ref{mainthm2},
some of the Taylor coefficients in \eqref{eq:lambdaJs4}
may be zero. In particular, the expression may
well contain Taylor coefficients that
preceed $\partial_t^{k_0} q_{-j_0 \,(\alpha_0)}^{(\beta_0)}(0,y(0),\eta(0))$
in the ordering, and
those are by assumption zero. In contrast to the proof of Theorem \ref{mainthm2}
we shall have to exploit this fact, since the coefficient of most of the monomials in
\eqref{eq:lambdaJs4} will be
a linear combination of the Taylor coefficients due to the factor
$(x+y'(0)t)^\alpha$. However, the ordering $>_t$ was chosen so that there can be no
nonzero Taylor coefficient $\partial_t^{k} q_{-j \,(\alpha)}^{(\beta_0)}(0,y(0),\eta(0))$
such that $k+|\alpha|>k_0+|\alpha_0|$, or $k+|\alpha|=k_0+|\alpha_0|$ and $k<k_0$.
This follows immediately from the choice of lexiographic order on the $n$-tuple
$(k,\alpha)\in\mathbb{N}^n$. (Recall that in the definition of the ordering
$>_t$, $x$ denoted all the variables in $\mathbb{R}^n$, while here we denote those variables
by $(t,x)$.) Hence, the only coefficient of the monomial
$t^{k_0}x^{\alpha_0}$ in \eqref{eq:lambdaJs4}
is $\partial_t^{k_0} q_{-j_0 \,(\alpha_0)}^{(\beta_0)}(0,y(0),\eta(0))$. We may therefore,
as in the proof of Theorem \ref{mainthm2}, choose $H$ so that the limit in
\eqref{eq:lambdaJs4} is nonzero. Since this contradicts \eqref{eq:integralest1},
Theorem \ref{mainthm1} follows
in view of the discussion following
Lemma \ref{lem26.4.14}.

\appendix

\section{}\label{appendix1}
\noindent Here we prove a few results used in the main text, related to how the property
that all terms in the asymptotic expansion of the total symbol have vanishing
Taylor coefficients is affected by various operations.

\begin{lem}\label{symbolvanishafterconjugation}
Suppose $X$ and $Y$ are two $C^\infty$ manifolds of the same dimension $n$.
Let $K\subset T^\ast(X)\smallsetminus 0$ and
$K'\subset T^\ast(Y)\smallsetminus 0$ be compactly based cones and let $\chi$ be a
homogeneous symplectomorphism from a conic neighborhood of $K'$ to one of $K$ such that
$\chi(K')=K$. Let $A\in I^{m'}(X\times Y, \varGamma')$ and $B\in I^{m''}(Y\times X,(\varGamma^{-1})')$
where $\varGamma$ is the graph of $\chi$, and assume that $A$ and $B$ are properly supported and
non-characteristic at the restriction of the graphs of $\chi$ and $\chi^{-1}$ to $K'$ and to $K$ respectively,
while $W\! F'(A)$ and $W\! F'(B)$ are contained in small conic neighborhoods. If $R$ is a
properly supported classical pseudo-differential
operator in $Y$, then each term in the asymptotic expansion of the total (left) symbol of $R$
has vanishing Taylor coefficients at a point $(y,\eta)\in K'$ if and only if
each term in the asymptotic expansion of the total (left) symbol of the pseudo-differential operator
$ARB$ in $X$ has vanishing Taylor coefficients at $\chi(y,\eta)\in K$.
\end{lem}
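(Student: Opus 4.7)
The plan is to reduce the lemma to an Egorov-type symbol calculation. Since the claim is microlocal near $(y,\eta)$ and $\chi(y,\eta)$, I would first pass to local coordinates and reduce to the case $X = Y = \mathbb{R}^n$; vanishing of all Taylor coefficients of each homogeneous term of a pseudo-differential symbol is preserved under smooth changes of base coordinates, because the transformation rule for the full left symbol is a polynomial expression in the derivatives of the original symbol evaluated at the corresponding point.

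Next, I would invoke the composition calculus for Fourier integral operators to conclude that, modulo an operator with wave front set disjoint from $K$, the operator $ARB$ is a properly supported classical pseudo-differential operator in $X$ of order $m + m' + m''$ near $K$. This is immediate from the fact that $\varGamma \circ \varGamma^{-1}$ coincides with the diagonal over $K$ (since $\chi$ is a symplectomorphism) together with the non-characteristic hypotheses on $A$ and $B$, which ensure that the compositions are clean in the relevant sense.

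The core step is the structural description of the asymptotic expansion of the full left symbol of $ARB$. Applying stationary phase to the oscillatory integral representing the kernel of $ARB$ (or, equivalently, iterating the composition formula for FIOs), one shows that in a conic neighborhood of $\chi(y,\eta)$ the $j$-th homogeneous term $s_{m+m'+m''-j}$ of $\sigma_{ARB}$ has the form
\[
s_{m+m'+m''-j}(x,\xi) = \sum_{|\alpha| + |\beta| + k \leq j} c_{\alpha,\beta,k}(x,\xi) \, \bigl(\partial_y^\alpha \partial_\eta^\beta r_{m-k}\bigr)\bigl(\chi^{-1}(x,\xi)\bigr),
\]
where $r_{m-k}$ denotes the $k$-th homogeneous term of $\sigma_R$ and the smooth coefficients $c_{\alpha,\beta,k}$ depend on the symbols of $A$, $B$ and on the canonical transformation $\chi$, but not on $R$. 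In other words, each $s_{m+m'+m''-j}$ is a finite-order differential operator applied to the pullback $(\chi^{-1})^\ast \sigma_R$.

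Granted this representation, the forward direction is immediate: any Taylor coefficient of $s_{m+m'+m''-j}$ at $\chi(y,\eta)$ is, by the chain rule and Leibniz's rule, a polynomial in finitely many Taylor coefficients of the $r_{m-k}$ at $\chi^{-1}(\chi(y,\eta)) = (y,\eta)$, all of which vanish by hypothesis. For the converse, I would use the parametrices $A_1$, $B_1$ available under the non-characteristic assumption (as in the proof of Proposition \ref{prop.26.4.4}) to write $B_1(ARB)A_1 \equiv R$ modulo an operator whose wave front set misses $K'$, and then apply the forward direction to $B_1(ARB)A_1$ with $\chi$ replaced by $\chi^{-1}$ and the roles of $A$ and $B$ interchanged. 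The main obstacle is establishing the structural formula for $\sigma_{ARB}$; this is a standard extension of Egorov's theorem following from the FIO composition calculus of ~\cite{ho4}, but the bookkeeping needed to verify that only finitely many derivatives of $\sigma_R$ at $\chi^{-1}(x,\xi)$ enter each $s_{m+m'+m''-j}$---and no derivatives of $\sigma_R$ at other points---must be done with care.
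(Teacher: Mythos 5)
Your proposal takes essentially the same route as the paper's own proof: the paper also reduces $ARB$ to a pseudo-differential operator via explicit stationary phase applied to the composed oscillatory integral (using a homogeneous generating function for $\chi$), and arrives at exactly the structural fact you postulate — that each homogeneous term of $\sigma_{ARB}$ is a finite-order differential expression in the homogeneous terms of $\sigma_R$ evaluated at $\chi^{-1}(x,\xi)$ — before reading off the forward direction and handling the converse with the microlocal parametrices $A_1, B_1$. The paper carries out in full detail the "bookkeeping" you correctly single out as the core obstacle, but the overall architecture of your argument matches.
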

\begin{proof}
We may assume that we have a homogeneous generating function $\varphi\in C^\infty$ for the symplectomorphism
$\chi$ (see ~\cite[pp. $101-103$]{grisjo}). Then $\chi$ is locally of the form
\[
(\partial \varphi (x,\eta)/\partial \eta , \eta)\mapsto (x, \partial \varphi (x,\eta)/\partial x),
\]
and $A$ and $B$ are given by
\[
Au(x)=\frac{1}{(2\pi)^n}\iint e^{i(\varphi(x,\zeta)-z\cdot\zeta)}a(x,z,\zeta)u(z) \, dz \, d\zeta,
\]
\[
Bv(y)=\frac{1}{(2\pi)^n}\iint e^{i(y\cdot\theta-\varphi(s,\theta))}b(y,s,\theta)v(s) \, ds \, d\theta.
\]
Since $R$ is properly supported we may assume that
\begin{equation}\label{eq:symvancon0}
Ru(z)=\frac{1}{(2\pi)^n}\int e^{iz\cdot\eta}r(z,\eta)\hat{u}(\eta)d \, \eta, \quad u\in C_0^\infty(Y),
\end{equation}
where $r(z,\eta)=\sigma_R$ is the total symbol of $R$. Hence
\begin{equation}\label{eq:symvancon1}
\begin{aligned}
ARBu(x)&=\frac{1}{(2\pi)^{3n}}\int e^{i(\varphi(x,\zeta)-z\cdot\zeta+(z-y)\cdot\sigma+y\cdot\theta-\varphi(s,\theta))}\\
& \phantom{=} \, \times a(x,z,\zeta)r(z,\sigma)b(y,s,\theta)u(s) \, ds \, d\theta \, dy \, d\sigma \, dz \, d\zeta,
\end{aligned}
\end{equation}
since $B$ being properly supported implies that $Bu\in C_0^\infty(Y)$
when $u\in C_0^\infty(Y)$.
Using integration by parts in $z$, we see that we can insert
a cutoff $\phi((\zeta-\sigma)/|\sigma|)$ in the last integral without changing the operator
$ARB$
mod $\varPsi^{-\infty}$. If we make the change of variables
$\tau=\zeta-\sigma$, then \eqref{eq:symvancon1} takes the form
\begin{equation*}
\begin{aligned}
A&RBu(x)=\frac{1}{(2\pi)^{3n}}\int \phi(\tau/|\sigma|)e^{i(\varphi(x,\tau+\sigma)-z\cdot(\tau+\sigma)
+(z-y)\cdot\sigma
+y\cdot\theta-\varphi(s,\theta))}\\
& \phantom{=} \, \times a(x,z,\tau+\sigma)r(z,\sigma)b(y,s,\theta)u(s) \, ds \, d\theta \, dy \, d\sigma \, dz \, d\tau
+Lu,
\end{aligned}
\end{equation*}
with $L\in\varPsi^{-\infty}$. If $\varOmega\subset \mathbb{R}^{2n}$ is open and
$\tilde{\varphi}\in C^\infty(\varOmega,\mathbb{R})$ is a phase function with a non-degenerate critical point
$x_0\in \varOmega$ such that $d \tilde{\varphi}\ne 0$ everywhere else, then ~\cite[Proposition $2.3$]{grisjo}
states, in particular, that
for every compact $M\subset \varOmega$ and every $u\in C^\infty (\varOmega)\cap \mathscr{E}'(M)$ we have
\begin{equation}\label{eq:symvancon2}
\begin{aligned}
\big|\int e^{i\lambda \tilde{\varphi}(x)} & u(x)dx-e^{i\lambda \tilde{\varphi}(x_0)}A_0u(x_0)\lambda^{-n}\big| \\
& \leq C_M\lambda^{-n-1}\sum_{|\alpha|\leq 2n+3} \sup |\partial^\alpha u(x)|, \quad \lambda \geq 1,
\end{aligned}
\end{equation}
where
\begin{equation}\label{eq:symvancon2.5}
A_0=\frac{(2\pi)^{n}\cdot e^{i\pi \sgn  \tilde{\varphi}''(x_0)/4}}{|\det \tilde{\varphi}''(x_0)|^{1/2}}.
\end{equation}
It is clear that the result extends to the setting $\varOmega=T^\ast(\mathcal{N})\smallsetminus 0$ where
$\mathcal{N}$ is a $C^\infty$ manifold of dimension $n$. 
In order to apply the result, we put
$\sigma=\lambda\omega$,
and make the change of variables $\tau=\lambda\tilde{\tau}$.
After dropping the $\tilde{\phantom{\tau}}$ we obtain
\begin{equation*}
\begin{aligned}
A&RBu(x)=\frac{\lambda^{2n}}{(2\pi)^{3n}}\int \phi(\tau/|\omega|)
e^{i\lambda(\varphi(x,\tau+\omega)-z\cdot(\tau+\omega)
+y\cdot\theta/\lambda+(z-y)\cdot\omega-\varphi(s,\theta)/\lambda)}\\
& \phantom{=} \, \times a(x,z,\lambda(\tau+\omega))r(z,\lambda\omega)
b(y,s,\theta)u(s) \, ds \, d\theta \, dy \, d\omega \, dz \, d\tau
+Lu,
\end{aligned}
\end{equation*}
where we have used the fact that $\varphi$ is homogeneous of degree $1$ in the fiber.
For the $z,\tau$-integration we have the non-degenerate critical point given by
$\tau=0, z=\varphi_{\zeta}'(x,\tau+\omega)$. Note that since
$\varphi_\zeta'$
is homogeneous of degree $0$ in the fiber we have $\varphi_\zeta'(x,\sigma/\lambda)
=\varphi_\zeta'(x,\sigma)$, so this critical point corresponds to the critical point
for the $z, \zeta$-integration given by $\zeta=\sigma, z=\varphi_\zeta'(x,\sigma)$.
Hence the above expression together with
\eqref{eq:symvancon2}
imply that
\begin{equation*}
\begin{aligned}
ARBu(x)&=C\lambda^{2n}\int
e^{i(\varphi(x,\lambda\omega)
+y\cdot\theta-y\cdot\lambda\omega-\varphi(s,\theta) )}\\
& \phantom{=} \, \times w(x,y,s,\omega,\theta) u(s) \, ds \, d\theta \, dy \, d\omega
+Lu,
\end{aligned}
\end{equation*}
where
\begin{align*}
w(x,y,s,\omega,\theta) & = \frac{A_0}{\lambda^{n}} a(x,z,\lambda(\tau+\omega))r(z,\lambda\omega)
b(y,s,\theta) \phi(\tau/|\omega|) \Big|_{ \begin{subarray}{l}\tau=0\\z=\varphi_\zeta'(x,\omega) \end{subarray}} \\
&=\frac{A_0}{\lambda^{n}} a(x,\varphi_\zeta'(x,\omega),\lambda\omega)r(\varphi_\zeta'(x,\omega),\lambda\omega)
b(y,s,\theta)
\end{align*}
with an error of order $\mathcal{O}(\lambda^{-n-1})$. Note that $A_0$ is now a function of
$x$ and $\omega$, since the matrix corresponding to $\tilde{\varphi}''(x_0)$ in \eqref{eq:symvancon2.5}
is given by the block matrix
\begin{equation}\label{eq:symvancon2.6}
F=\left( \begin{array}{cc}
0 & -\mathrm{Id}_n \\
-\mathrm{Id}_n & \varphi_{\zeta \zeta}''(x,\omega)
\end{array}\right),
\end{equation}
where $\mathrm{Id}_n$ is the identity matrix on $\mathbb{R}^n$.
Clearly the determinant of $F$ is either $1$ or $-1$, so $F$ is non-singular.
Furthermore, $F$ depends smoothly on the parameters $x$ and $\omega$
since $\varphi\in C^\infty$,
so the eigenvalues of $F$ are continuous in $x$ and $\omega$. Hence it follows that the signature of $F$
is constant, for if not there has to exist an eigenvalue
vanishing at some point $(x,\omega)$, contradicting the non-singularity of $F$.
Reverting to the variable
$\sigma=\lambda\omega$ we thus obtain
\begin{equation*}
\begin{aligned}
ARBu(x)&=C\int
e^{i(\varphi(x,\sigma)
+y\cdot(\theta-\sigma)-\varphi(s,\theta) )}\\
& \phantom{=} \, \times \tilde{w}(x,y,s,\sigma,\theta) u(s) \, ds \, d\theta \, dy \, d\sigma
+Lu,
\end{aligned}
\end{equation*}
where
\begin{equation*}
\tilde{w}(x,y,s,\sigma,\theta) =
a(x,\varphi_\zeta'(x,\sigma),\sigma)r(\varphi_\zeta'(x,\sigma),\sigma)
b(y,s,\theta)
\end{equation*}
with an error of order $\mathcal{O}(\lambda^{-1})$. Taking the limit as $\lambda\to\infty$ yields
\begin{equation*}
\begin{aligned}
ARBu(x)&=C\int
e^{i(\varphi(x,\sigma)
+y\cdot(\theta-\sigma)-\varphi(s,\theta) )}a(x,\varphi_\zeta'(x,\sigma),\sigma)\\
& \phantom{=} \, \times r(\varphi_\zeta'(x,\sigma),\sigma)
b(y,s,\theta)  u(s) \, ds \, d\theta \, dy \, d\sigma
+Lu.
\end{aligned}
\end{equation*}

We can now repeat the procedure. Indeed, we can insert a cutoff
$\phi((\sigma-\theta)/|\theta|)$ without changing the operator mod
$\varPsi^{-\infty}$, and after making the corresponding changes of variables 
in order to apply ~\cite[Proposition $2.3$]{grisjo}
we find that for the $y,\sigma$-integration we have the non-degenerate critical point given
in the original variables by $\sigma=\theta, y=\varphi_\sigma'(x,\sigma)$. After taking the
limit as $\lambda\to\infty$ we obtain
\begin{equation*}
\begin{aligned}
ARBu(x)&=C\int
e^{i(\varphi(x,\theta)
-\varphi(s,\theta) )} w_1(x,s,\theta) u(s) \, ds \, d\theta
+L_1u,
\end{aligned}
\end{equation*}
where $L_1\in\varPsi^{-\infty}$ and
\begin{equation}\label{eq:symvancon3}
w_1(x,s,\theta) = a(x,\varphi_\theta'(x,\theta),\theta)
r(\varphi_\theta'(x,\theta),\theta)
b(\varphi_\theta'(x,\theta),s,\theta).
\end{equation}
As before we let the factor $A_0$ from \eqref{eq:symvancon2.5}
be included in the constant $C$.
In a conic neighborhood of
$\supp w_1$ we can write
\begin{equation*}
\varphi(x,\theta)-\varphi(s,\theta)=(x-s)\varXi(x,s,\theta).
\end{equation*}
Then $\varXi(x,x,\theta)=\varphi_x'(x,\theta)$ so
$\partial \varXi(x,x,\theta)/\partial \theta=\varphi_{x \theta}''(x,\theta)$
is invertible, since $\varphi_{x \theta}''(x,\theta)\neq 0$
is equivalent to the fact that the graph of $\chi$ is (locally)
the graph of a smooth map.
Hence $\theta\mapsto\varXi(x,s,\theta)$ is $C^\infty$, homogeneous of degree $1$
and with an inverse having the same properties. For $s$ close to $x$,
the equation $\varXi(x,s,\theta)=\xi$ then defines $\theta=\varTheta(x,s,\xi)$.
After a change of variables, the last integral therefore takes the form
\begin{equation}\label{eq:symvancon4}
ARBu(x)=C\int e^{i(x-s)\cdot\xi} 
\tilde{w}_1(x,s,\xi)u(s) \, ds \, d\xi +L_1u,
\end{equation}
where $\tilde{w}_1(x,s,\xi)$ is just $w_1(x,s,\varTheta(x,s,\xi))$
multiplied by a Jacobian. We note in passing that evaluating $\tilde{w}_1$
at a point $(x,x,\xi)$ where $\xi$ is of the form $\xi=\varphi_x'(x,\eta)$
therefore involves evaluating $w_1$ at the point $(x,x,\eta)$.
The integral \eqref{eq:symvancon4} defines a pseudo-differential operator
with total symbol $\rho(x,\xi)$ satisfying
\begin{equation}\label{eq:symvancon5}
\rho(x,\xi)\sim\sum \frac{i^{-|\alpha|}}{\alpha!}(\partial_\xi^\alpha \partial_y^\alpha
\tilde{w}_1(x,y,\xi))\rvert_{y=x}.
\end{equation}
If the total symbol $r=\sigma_R$ of $R$
has vanishing Taylor coefficients at a point $(y,\eta)=(\varphi_\eta'(x,\eta),\eta)$,
then by examining \eqref{eq:symvancon5} in decreasing order of homogeneity we find that
each term of $\rho$ must have
vanishing Taylor coefficients at $(x,\xi)=(x,\varphi_x'(x,\eta))$,
since by what we have shown this would involve evaluating $r(z,\sigma)$
and its derivatives at $(\varphi_\eta'(x,\eta),\eta)$.

To prove the converse, choose $A_1\in I^{-m''}(X\times Y, \varGamma')$
and $B_1\in I^{-m'}(Y\times X, (\varGamma^{-1})')$
properly supported such that
\begin{align*}
K'\cap W\! F(BA_1-I)& = \emptyset,  & K  \cap W\! F(A_1B-I) = \emptyset, \\
K'\cap W\! F(B_1A-I)& = \emptyset,  & K  \cap W\! F(AB_1-I) = \emptyset.
\end{align*}
Then a repetition of the arguments above shows that all the terms in the
asymptotic expansion of the total symbol of $B_1ARBA_1$
has vanishing Taylor coefficients at a point $(y,\eta)=(\varphi_\eta'(x,\eta),\eta)$
if all the terms in the asymptotic expansion of the total symbol of $ARB$
has vanishing Taylor coefficients at $(x,\xi)=(x,\varphi_x'(x,\eta))$.
Since $R$ and $B_1ARBA_1$ have the same total symbol in $K'$ mod $\varPsi^{-\infty}$,
the same must hold for the total symbol of $R$.
This completes the proof.
\end{proof}

Let $\{e_k : k=1,\ldots, n\}$ be a basis for $\mathbb{R}^n$, let $(U,x)$ be local coordinates on
a smooth manifold $M$ of dimension $n$, and let
\[
\Big\{\frac{\partial}{\partial x_k}: k=1,\ldots,n\Big\}
\]
be the induced local frame for the tangent bundle $TM$. 
Since the local frame fields commute,
we can use standard multi-index notation to express
the partial derivatives $\partial_x^\alpha f$ of $f\in C^\infty (U)$.

\begin{lem}\label{appthm19}
Let $M$ be a smooth manifold of dimension $n$, and
for $j\ge 1$ let $p,q_j,g_j\in C^\infty(M)$.
Let $\{\gamma_j\}_{j=1}^\infty$ be a sequence in $M$
such that $\gamma_j\to\gamma$ as $j\to\infty$, and assume that
$p(\gamma)=p(\gamma_j)=0$ for all $j$,
and that
$dp(\gamma)\ne 0$.
Let $(U,x)$ be local coordinates on $M$ near $\gamma$,
and suppose that there exists a smooth function $q\in C^\infty(M)$
such that
\[
\partial_x^\alpha q(\gamma)
=\lim_{j\to\infty} \partial_x^\alpha q_j(\gamma_j)
\]
for all $\alpha\in\mathbb{N}^n$.
If $q_j-pg_j$ vanishes of infinite order at $\gamma_j$ for all $j$, then there
exists a smooth function $g\in C^\infty(M)$
such that $q-pg$ vanishes of infinite order at $\gamma$.
Furthermore,
\begin{equation}\label{eq:app20}
\partial_x^\alpha g(\gamma)
=\lim_{j\to\infty} \partial_x^\alpha g_j(\gamma_j)
\end{equation}
for all $\alpha\in\mathbb{N}^n$.
\end{lem}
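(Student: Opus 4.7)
The plan is as follows. Since $dp(\gamma)\ne 0$, by the straightening theorem there exist local coordinates $y=(y_1,y')$ centered at $\gamma$ on $M$ in which $p(y)=y_1$; the hypothesis $p(\gamma_j)=0$ then places $\gamma_j$ on $\{y_1=0\}$, so $\gamma_j=(0,y_j')$ with $y_j'\to 0$. By Fa\`a di Bruno, $\partial_y^\beta q_j(\gamma_j)$ is a linear combination of the $\partial_x^\alpha q_j(\gamma_j)$ with coefficients depending smoothly on position, and similarly for any other smooth function in place of $q_j$; hence the convergence hypothesis on the derivatives of $q_j$ at $\gamma_j$ transfers to the $y$-coordinates, and the sought relation \eqref{eq:app20} in the $x$-coordinates will follow from the corresponding statement in the $y$-coordinates by the same chain rule. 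I therefore work exclusively in the $y$-coordinates.

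Next I extract relations among the derivatives of $q_j$ and $g_j$ at $\gamma_j$. By Leibniz, $\partial_{y_1}^k\partial_{y'}^\beta(y_1 g_j)|_{y_1=0}=k\,\partial_{y_1}^{k-1}\partial_{y'}^\beta g_j|_{y_1=0}$ for $k\ge 1$ and vanishes for $k=0$. Hence the hypothesis that $q_j-y_1 g_j$ vanishes of infinite order at $\gamma_j=(0,y_j')$ is equivalent to the pair of families
\begin{align*}
\partial_{y'}^\beta q_j(\gamma_j)&=0,\\
\partial_{y_1}^{k-1}\partial_{y'}^\beta g_j(\gamma_j)&=\tfrac{1}{k}\,\partial_{y_1}^k\partial_{y'}^\beta q_j(\gamma_j),
\end{align*}
where $\beta$ ranges over $\mathbb{N}^{n-1}$ and $k\ge 1$. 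Letting $j\to\infty$ and using the hypothesis on $q$, the first family yields $\partial_{y'}^\beta q(\gamma)=0$ for every $\beta$, so $y'\mapsto q(0,y')$ vanishes of infinite order at $y'=0$; the second gives $\lim_{j\to\infty}\partial_{y_1}^{k-1}\partial_{y'}^\beta g_j(\gamma_j)=\tfrac{1}{k}\,\partial_{y_1}^k\partial_{y'}^\beta q(\gamma)$ for every $k\ge 1$ and $\beta$.

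To produce $g$, I take the Hadamard integral $g(y):=\int_0^1(\partial_{y_1}q)(sy_1,y')\,ds$, which is smooth in a neighborhood of $\gamma$. A change of variable gives $q(y)-y_1 g(y)=q(0,y')$, and the preceding step shows that this function of $y$ vanishes of infinite order at $y=0$; hence $q-pg$ vanishes of infinite order at $\gamma$. Differentiating under the integral sign yields $\partial_{y_1}^k\partial_{y'}^\beta g(\gamma)=\tfrac{1}{k+1}\,\partial_{y_1}^{k+1}\partial_{y'}^\beta q(\gamma)$, which matches the limit identity obtained in the previous paragraph. Multiplying $g$ by a smooth cutoff equal to $1$ in a neighborhood of $\gamma$ extends it to an element of $C^\infty(M)$ without altering its Taylor expansion at $\gamma$ nor the vanishing of $q-pg$, and the chain rule transfers \eqref{eq:app20} back to the $x$-coordinates.

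The argument is essentially bookkeeping, so the only real obstacle is making sure that the passage $j\to\infty$ commutes with the Leibniz relations in the second paragraph; this is painless because each relation involves only finitely many $\partial^\alpha q_j(\gamma_j)$, all of which converge by hypothesis. An alternative to the Hadamard construction would be to prescribe the Taylor series of $g$ at $\gamma$ via the computed coefficients and invoke Borel's theorem, but the Hadamard route has the advantage that the identity $q-pg=q(0,y')$ makes the infinite-order vanishing transparent.
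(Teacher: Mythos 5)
Your proof is correct, but it takes a genuinely different route from the paper's. The paper keeps $p$ general: it only rotates coordinates so that $\partial_{x_1}p\ne 0$ near $\gamma$, then runs an explicit but somewhat involved recursion over $|\alpha|$ (extracting one new derivative of $g_j$ at a time by dividing through by $\partial_{x_1}p(\gamma_j)$ in the system $\partial_x^\alpha(q_j-pg_j)(\gamma_j)=0$) to determine each limit $\lim_j \partial_x^\alpha g_j(\gamma_j)$, and finally appeals to Borel's theorem to realize those numbers as the Taylor coefficients of a smooth $g$. You instead normalize $p$ completely to $y_1$ via the straightening theorem (paying the small price of a Fa\`a di Bruno/chain-rule transfer of the convergence hypothesis between the $x$- and $y$-coordinates), which turns the system into the transparent Leibniz identities $\partial_{y_1}^{k-1}\partial_{y'}^\beta g_j(\gamma_j)=\tfrac1k\partial_{y_1}^k\partial_{y'}^\beta q_j(\gamma_j)$ and $\partial_{y'}^\beta q_j(\gamma_j)=0$, and then produce $g$ directly by Hadamard's integral formula rather than Borel. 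Your version gives closed-form Taylor coefficients and an explicit $g$ with $q-pg=q(0,y')$, which makes the infinite-order vanishing visible at a glance; the paper's version avoids the coordinate change and the associated bookkeeping that the derivative-convergence hypothesis is coordinate-independent, but at the cost of a heavier recursion and an abstract Borel construction. Both are sound; the one real step you should make sure to state carefully is the chain-rule transfer in both directions ($x\to y$ for the hypothesis, $y\to x$ for the conclusion), since the coefficients in Fa\`a di Bruno are evaluated at $\gamma_j$ and one needs both $\gamma_j\to\gamma$ and the smooth dependence of those coefficients on the base point — you do note this, and it works.
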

\begin{proof}
We have stated the result for a manifold, but since the result
is purely local we may assume that $M\subset\mathbb{R}^n$ in the proof.
It is also clear that we may assume that there exists an open neighborhood
$\mathcal{U}$ of $\gamma$ such that $\gamma_j\in \mathcal{U}$ for $j\ge 1$,
and that $dp\ne 0$ in $\mathcal{U}$.
By shrinking $\mathcal{U}$ if necessary, we can then find a unit vector $\nu\in\mathbb{R}^n$ such that
$\partial_\nu p(w)=\langle \nu, dp(w)\rangle\ne 0$ for $w\in \mathcal{U}$.
(We will identify a tangent vector $\nu\in\mathbb{R}^n$ at $\gamma$ with
$\partial_\nu\in T_\gamma \mathbb{R}^n$
through the usual vector space isomorphism.)
Hence $\partial_\nu p(w)$ is
invertible in $\mathcal{U}$, and we let $(\partial_{\nu} p(w))^{-1}\in C^\infty(\mathcal{U})$ denote its inverse.
By an orthonormal change of coordinates we may even assume that
$\partial_\nu p(w)=\partial_{e_1}p(w)$.
In accordance with the notation used in the statement of the lemma, we shall
write $\partial_{x_k}p(w)$ for the partial derivatives $\partial_{e_k}p(w)$
and denote by $(\partial_{x_1} p(w))^{-1}$ the inverse of $\partial_{\nu} p(w)=\partial_{x_1} p(w)$
in $\mathcal{U}$.

Now
\begin{equation}\label{eq:app20.5}
0=\partial_{x_1} (q_j-pg_j)(\gamma_j)=\partial_{x_1} q_j(\gamma_j) -  \partial_{x_1} p(\gamma_j) g_j(\gamma_j)
\end{equation}
for all $j$ since $p(\gamma_j)=0$. Since $\lim_j \partial_{x_1} q_j(\gamma_j)=\partial_{x_1} q(\gamma)$
by assumption, equation \eqref{eq:app20.5} yields
\begin{equation}\label{eq:app21}
\lim_{j\to\infty} g_j(\gamma_j)=(\partial_{x_1} p(\gamma))^{-1}\partial_{x_1} q(\gamma)=a\in\mathbb{C}.
\end{equation}

We claim that we can in the same way determine
\[
\lim_{j\to\infty} (\partial_x^\alpha g_j)(\gamma_j)=a_{(\alpha)}\in\mathbb{C}
\]
for any $\alpha\in\mathbb{N}^n$.
We start by
determining
\[
\lim_{j\to\infty} \partial g_j(\gamma_j) / \partial x_k=a_{(k)}
\]
for $1\leq k\leq n$. By the hypotheses of the lemma we have
\begin{equation}\label{eq:app22}
\begin{aligned}
0&=\partial_{x_k}\partial_{x_l}(q_j-pg_j)(\gamma_j)\\
&= \partial_{x_k}\partial_{x_l} q_j(\gamma_j)-
\partial_{x_k}\partial_{x_l} p(\gamma_j)g_j(\gamma_j)\\
&\phantom{=}-\partial_{x_k} p(\gamma_j) \partial_{x_l} g_j(\gamma_j)
-\partial_{x_l} p(\gamma_j) \partial_{x_k} g_j(\gamma_j)
\end{aligned}
\end{equation}
since $p(\gamma_j)=0$. For $k=l=1$ we obtain from \eqref{eq:app21} and \eqref{eq:app22}
\begin{equation}\label{eq:app23}
\lim_{j\to\infty} \partial_{x_1} g_j(\gamma_j) 
=(\partial_{x_1} p(\gamma))^{-1}
\big(\partial_{x_1}^2 q(\gamma)
-\partial_{x_1}^2 p(\gamma) a \big)/2.
\end{equation}
This allows us to solve for $\partial_{x_k} g_j(\gamma_j)$ in \eqref{eq:app22}
by choosing $l=1$. If $b\in\mathbb{C}$ denotes the limit in \eqref{eq:app23}
and $a\in\mathbb{C}$ is given by \eqref{eq:app21} we
thus obtain
\begin{equation*}
\begin{aligned}
\lim_{j\to\infty} \partial_{x_k} g_j(\gamma_j)&=(\partial_{x_1} p(\gamma))^{-1}
\big(\partial_{x_1}\partial_{x_k} q(\gamma)\\
&\phantom{=} -\partial_{x_1}\partial_{x_k} p(\gamma) a
- \partial_{x_k}p(\gamma) b \big)
\end{aligned}
\end{equation*}
for $2\leq k \leq n$.

Now assume that for some $m\geq 3$ we have in this way determined
\[
\lim_{j\to\infty} \partial_{x_{k_1}}\ldots\partial_{x_{k_{m-2}}} g_j(\gamma_j),
\]
for $k_i\in [1,n]$, $i\in [1,m-2]$.
To shorten notation, we will use the (non standard) multi-index notation
introduced on page
\pageref{eq:defgbeta};
to every $\alpha\in\mathbb{N}^n$ with $|\alpha|=m$ corresponds precisely
one $m-$tuple
$\beta=(k_1,\ldots,k_m)$ of
non-decreasing numbers
$1\le k_1\le\ldots\le k_m\le n$
such that $\partial_x^\beta$ equals $\partial_x^\alpha$.
Throughout the rest of this proof we shall let $\beta$ represent
such an $m-$tuple, and we let
\[
\hat{\beta}_i=(k_1,\ldots,k_{i-1},k_{i+1},\ldots,k_m).
\]
As before we have
\begin{equation}\label{eq:app24}
\begin{aligned}
0=\partial_x^\beta (q_j-pg_j)(\gamma_j)
&= \partial_x^\beta  q_j(\gamma_j)-
\partial_x^\beta  p(\gamma_j)g_j(\gamma_j) \\
&\phantom{=}
-\ldots -\sum_{i=1}^m \partial_{x_{k_i}} p(\gamma_j)
\partial_x^{\hat{\beta}_i}g_j(\gamma_j)
\end{aligned}
\end{equation}
by assumption. If we choose $k_i=1$ for all $1\leq i \leq m$, the last sum is just
$m \partial_{x_1} p(\gamma_j)\partial_{x_1}^{m-1}g_j(\gamma_j)$, and since
the limit of all other terms on the right-hand side are known by the induction
hypothesis, we thus obtain the value of the limit of $\partial_{x_1}^{m-1}g_j(\gamma_j)$
from \eqref{eq:app24} by first multiplying by
$m^{-1}(\partial_{x_1}p(\gamma_j))^{-1}$ and then letting $j\to\infty$.
Denote this limit by $c\in\mathbb{C}$.
If we choose $k_i\ne1$ for precisely one $i\in [1,m]$, say $k_m=k$, then the last sum
in \eqref{eq:app24} satisfies
\begin{align*}
\sum_{i=1}^m  \partial_{x_{k_i}} p(\gamma_j) \partial_x^{\hat{\beta}_i}g_j(\gamma_j)
& =\partial_{x_{k}}p(\gamma_j)\partial_{x_{1}}^{m-1} g_j(\gamma_j) \\
& \phantom{=}+(m-1)\partial_{x_{1}}p(\gamma_j)\partial_{x_{1}}^{m-2}\partial_{x_{k}}
g_j(\gamma_j),
\end{align*}
so by the same argument as before we can obtain the value of
\[
\lim_{j\to\infty} \partial_{x_{1}}^{m-2}\partial_{x_{k}}g_j(\gamma_j)
\]
for $2\le k\le n$ by multiplying by
$(m-1)^{-1}(\partial_{x_{1}}p(\gamma_j))^{-1}$ and using
$\partial_{x_{1}}^{m-1} g_j(\gamma_j)\to c$
when taking the limit as $j\to\infty$ in \eqref{eq:app24}.
Continuing this way it is clear that we can successively determine
\begin{equation*}
\lim_{j\to\infty} \partial_{x_{k_1}}\ldots\partial_{x_{k_{m-1}}}g_j(\gamma_j)
\end{equation*}
for any $1\le k_1\le\ldots\le k_{m-1}\le n$
which completely determines
\[
\lim_{j\to\infty} \partial_x^\alpha g_j(\gamma_j)=a_{(\alpha)},
\quad \alpha\in\mathbb{N}^n, \ |\alpha|=m-1.
\]
This proves the claim.

By Borel's theorem there exists a
smooth function $g\in C^\infty(M)$ such that
\begin{equation*}\label{eq:app25}
\partial_x^\alpha g(\gamma)=a_{(\alpha)}
=\lim_{j\to\infty} \partial_x^\alpha g_j(\gamma_j)
\end{equation*}
for all $\alpha\in\mathbb{N}^n$. Since $q-pg$ vanishes of infinite order at $\gamma$ by
construction, this completes the proof.
\end{proof}

The lemma will be used to prove the following result for homogeneous smooth functions on the cotangent bundle.
\begin{prop}\label{appthm26}
For $j\ge 1$ let $p,q_j,g_j\in C^\infty(T^\ast (\mathbb{R}^n)\smallsetminus 0)$, where
$p$ and $q_j$ are homogeneous of degree $m$ and the $g_j$ are homogeneous of degree $0$.
Let $\{\gamma_j\}_{j=1}^\infty$ be a sequence in $T^\ast (\mathbb{R}^n)\smallsetminus 0$
such that $\gamma_j\to\gamma$ as $j\to\infty$, and assume that $p(\gamma)=p(\gamma_j)=0$ for all $j$,
and that $dp(\gamma)\ne 0$.
If there exists a smooth function $q\in C^\infty(T^\ast(\mathbb{R})^n\smallsetminus 0)$, homogeneous of degree $m$,
such that
\[
\partial_x^\alpha\partial_\xi^\beta q(\gamma)
=\lim_{j\to\infty} \partial_x^\alpha\partial_\xi^\beta q_j(\gamma_j)
\]
for all $(\alpha,\beta)\in\mathbb{N}^n\times\mathbb{N}^n$,
and if $q_j-pg_j$ vanishes of infinite order at $\gamma_j$ for all $j$, then there
exists a $g\in C^\infty(T^\ast(\mathbb{R}^n)\smallsetminus 0)$, homogeneous
of degree $0$, such that $q-pg$ vanishes of infinite order at $\gamma$.
Furthermore,
\begin{equation}\label{eq:app27}
\partial_x^\alpha\partial_\xi^\beta g(\gamma)
=\lim_{j\to\infty} \partial_x^\alpha\partial_\xi^\beta g_j(\gamma_j)
\end{equation}
for all $(\alpha,\beta)\in\mathbb{N}^n\times\mathbb{N}^n$.
\end{prop}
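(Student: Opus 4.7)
The plan is to reduce Proposition \ref{appthm26} to Lemma \ref{appthm19} by restricting everything to the cosphere bundle $M=\{(x,\xi)\in T^\ast(\mathbb{R}^n)\smallsetminus 0:|\xi|=1\}$, exploiting the homogeneity of $p$, $q_j$ and $g_j$. Write $\gamma=(x,\xi)$ and set $\gamma^0=(x,\xi/|\xi|)$, and similarly $\gamma_j^0=(x_j,\xi_j/|\xi_j|)$ if $\gamma_j=(x_j,\xi_j)$. Since $p$ is homogeneous of degree $m$ we have $p(\gamma_j^0)=|\xi_j|^{-m}p(\gamma_j)=0$ and $p(\gamma^0)=0$, while $g_j(\gamma_j^0)=g_j(\gamma_j)$ as $g_j$ is homogeneous of degree $0$.

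First I would verify that the hypotheses of Lemma \ref{appthm19} hold for the restrictions to $M$ at the points $\gamma_j^0\to\gamma^0$. The crucial non-degeneracy $dp|_M(\gamma^0)\ne 0$ follows from the Euler identity: at $\gamma^0$ we have $m\,p(\gamma^0)=\xi_0\cdot\nabla_\xi p(\gamma^0)=0$, so $\nabla_\xi p(\gamma^0)$ is tangent to the unit sphere in the fiber and no information is lost when restricting $dp$ to $T_{\gamma^0}M$; the assumption $dp(\gamma)\ne 0$ combined with the homogeneity relations $\partial_{x_i}p(x,\lambda\xi)=\lambda^m\partial_{x_i}p(x,\xi)$, $\partial_{\xi_j}p(x,\lambda\xi)=\lambda^{m-1}\partial_{\xi_j}p(x,\xi)$ then yield $dp(\gamma^0)\ne 0$ and hence $dp|_M(\gamma^0)\ne 0$. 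The infinite-order vanishing of $q_j-pg_j$ transfers from $\gamma_j$ to $\gamma_j^0$ because $q_j-pg_j$ is homogeneous of degree $m$ and $\partial_x^\alpha\partial_\xi^\beta f(x,\lambda\xi)=\lambda^{m-|\beta|}\partial_x^\alpha\partial_\xi^\beta f(x,\xi)$ for any $f$ homogeneous of degree $m$; restricting to $M$ preserves infinite-order vanishing. The convergence of ambient Taylor coefficients of $q_j$ at $\gamma_j$ to those of $q$ at $\gamma$ implies convergence of $M$-Taylor coefficients of $q_j|_M$ at $\gamma_j^0$ to those of $q|_M$ at $\gamma^0$: using local coordinates on $M$ of the form $(x,\xi')$ with $\xi_n=\sqrt{1-|\xi'|^2}>0$, any $M$-partial derivative of a restricted function is expressed by the chain rule as a polynomial in ambient partial derivatives with coefficients smooth in $\xi'$, and both the ambient derivatives and these coefficients pass to the limit as $\gamma_j^0\to\gamma^0$.

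Lemma \ref{appthm19} then yields a function $\tilde g\in C^\infty(M)$ such that $q|_M-p|_M\tilde g$ vanishes of infinite order at $\gamma^0$, and such that all $M$-Taylor coefficients of $g_j|_M$ at $\gamma_j^0$ converge to those of $\tilde g$ at $\gamma^0$. I then extend $\tilde g$ to $T^\ast(\mathbb{R}^n)\smallsetminus 0$ by homogeneity of degree $0$, setting
\[
g(x,\xi):=\tilde g\bigl(x,\xi/|\xi|\bigr),
\]
which is smooth on $T^\ast(\mathbb{R}^n)\smallsetminus 0$ and homogeneous of degree $0$.

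The remaining verification is the translation from the conclusions on $M$ back to ambient statements at $\gamma$. Both $q-pg$ and each $g_j$ are homogeneous (of degrees $m$ and $0$, respectively), so in polar coordinates $r=|\xi|$, $\omega=\xi/|\xi|$ one can write $(q-pg)(x,\xi)=r^m\bigl((q-pg)|_M\bigr)(x,\omega)$ and $g_j(x,\xi)=(g_j|_M)(x,\omega)$. Any ambient derivative $\partial_x^\alpha\partial_\xi^\beta$ applied to either expression is, by the chain rule, a finite sum of products of smooth functions of $(x,\xi)$ with $M$-derivatives of the restricted functions evaluated at $(x,\omega)$. Evaluating at $\gamma$ therefore only involves $M$-derivatives of $(q-pg)|_M=q|_M-p|_M\tilde g$ at $\gamma^0$, which all vanish, proving that $q-pg$ vanishes of infinite order at $\gamma$; and evaluating the analogous expansion for $g_j$ at $\gamma_j$ only involves $M$-derivatives of $g_j|_M$ at $\gamma_j^0$ together with coefficient functions evaluated at $\gamma_j$, both of which converge, yielding \eqref{eq:app27}. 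The main obstacle in this plan is keeping careful track of how the chain-rule expansions for the $M$-to-ambient passage interact with the convergence of base points $\gamma_j\to\gamma$ and $\gamma_j^0\to\gamma^0$ simultaneously; once this bookkeeping is handled, the rest is a direct application of Lemma \ref{appthm19}.
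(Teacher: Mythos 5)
Your proof is correct and follows the same two-step strategy as the paper's: reduce the problem to an application of Lemma \ref{appthm19}, then exploit homogeneity to recover the ambient statement. The only real difference is which manifold Lemma \ref{appthm19} is applied to. You restrict everything to the cosphere bundle $S^\ast(\mathbb{R}^n)$ first and apply the lemma there, which requires the extra observation that $d(p|_{S^\ast})(\gamma^0)\ne 0$ (obtained via the Euler identity, since the radial derivative of $p$ vanishes on its zero set), plus a routine verification that the Taylor-coefficient convergence and the infinite-order vanishing of $q_j-pg_j$ descend correctly from the ambient cotangent space to the cosphere bundle. The paper instead applies the lemma directly in the full $2n$-dimensional ambient space $T^\ast(\mathbb{R}^n)\smallsetminus 0$ (where $dp(\gamma)\ne 0$ is available verbatim), obtains a non-homogeneous $\tilde g$ there, and only then homogenizes by setting $g(x,\xi)=\tilde g(x,\xi/|\xi|)$; it then argues that $g$ and $\tilde g$ have the same tangential derivatives at $\gamma$ and that the radial derivatives of the homogeneous function $q-pg$ are determined by the tangential ones. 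Your route saves the comparison between $g$ and $\tilde g$ at the cost of the Euler-identity argument and the chain-rule bookkeeping between $S^\ast$-derivatives and ambient derivatives; the paper's route avoids that bookkeeping but must note that the radial Taylor data of $\tilde g$ is forced to agree with that of a degree-$0$ homogeneous function (which follows by taking limits of the Euler relations for the $g_j$). Both are sound, and the underlying idea — apply Lemma \ref{appthm19}, homogenize, and use homogeneity to upgrade tangential vanishing to ambient vanishing — is the same.
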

\begin{proof}
Let $\pi : T^\ast (\mathbb{R}^n)\smallsetminus 0 \rightarrow S^\ast (\mathbb{R}^n)$ be the projection.
Since $dp(\gamma)\ne 0$ it follows from homogeneity that
$dp(\pi(\gamma))\ne 0$.
By using the homogeneity of
$q$, $q_j$ and $g_j$
we may even
assume that $\gamma$ and $\gamma_j$ belong to $S^\ast (\mathbb{R}^n)$
for $j\ge 1$ to begin with.

Now, the radial vector field $\xi\partial_\xi$
applied $k$ times to $a\in C^\infty (T^\ast(\mathbb{R}^n)\smallsetminus 0)$
equals $l^ka$ if $a$ is homogeneous of degree $l$. 
For any point $w\in S^\ast(\mathbb{R}^n)$
with $w=(w_x,w_\xi)$ in local coordinates on $T^\ast(\mathbb{R}^n)$
it is easy to see that
\[
T_w S^\ast (\mathbb{R}^n) =\{ (u,v)\in \mathbb{R}^n\times \mathbb{R}^n :
\langle w_\xi , v \rangle = 0\}.
\]
Therefore a basis for $T_w S^\ast (\mathbb{R}^n)$ together with
the radial vector field $(\xi\partial_\xi)_w$ at $w$ constitutes
a basis for $T_w T^\ast (\mathbb{R}^n)$. This implies that if
we can find a homogeneous function $g$ such
that $q-pg$ vanishes of infinite order in the directions
$T_\gamma S^\ast(\mathbb{R}^n)$, then $q-pg$ vanishes of infinite order
at $\gamma$, for the derivatives involving the radial direction are
determined by lower order derivatives in the directions
$T_\gamma S^\ast(\mathbb{R}^n)$.

By the hypotheses of the proposition
together with an application of Lemma \ref{appthm19},
we find that there exists a function $\tilde{g}\in C^\infty (T^\ast (\mathbb{R}^n))$,
not necessarily homogeneous,
such that $q-p\tilde{g}$ vanishes of infinite order at $\gamma$ and
\eqref{eq:app27} holds for $\tilde{g}$.
The function $g(x,\xi)=\tilde{g}(x,\xi/|\xi|)$
coincides with $\tilde{g}$ on $S^\ast (\mathbb{R}^n)$. In particular,
all derivatives of $g$ and $\tilde{g}$ in the directions $T_\gamma S^\ast(\mathbb{R}^n)$
are equal at $\gamma$.
Thus, by the arguments above
we conclude that $q-pg$ vanishes of infinite order at $\gamma$.
Since $g$ and $g_j$ are homogeneous of degree $0$, the same
arguments also imply that \eqref{eq:app27} holds for $g$,
which completes the proof.
\end{proof}


\begin{thebibliography}{99}
\bibitem{de} Dencker, N.: \emph{On the sufficiency of condition $(\varPsi)$},
Ann. of Math. $\mathbf{163}$, $405-444$ $(2006)$.
\bibitem{de1} Dencker, N.: Private communication.
\bibitem{grisjo} Grigis, A. and J. Sjöstrand: \emph{Microlocal
analysis for differential operators}, London Mathematical Society Lecture Note Series,
$\mathbf{196}$, Cambridge University Press, Cambridge $1994$.
\bibitem{ho10} Hörmander, L.: \emph{Differential operators
of principal type}, Math. Ann. $\mathbf{140}$, $124-146$ $(1960)$.
\bibitem{ho11} Hörmander, L.: \emph{Differential equations
without solutions}, Math. Ann. $\mathbf{140}$, $169-173$ $(1960)$.
\bibitem{ho0} Hörmander, L.: \emph{Linear Partial
Differential Operators}, Springer-Verlag, Berlin $1963$
\bibitem{ho17} Hörmander, L.: \emph{Pseudo-differential operators
and non-elliptic boundary problems}, Ann. of Math. 
$\mathbf{83}$, $129-209$ $(1966)$.
\bibitem{ho40} Hörmander, L.: \emph{Pseudo-differential operators
of principal type}, Singularities in boundary value problems
(Proc. NATO Adv. Study Inst., Maratea, $1980$) NATO Adv. Study Inst. Ser. C: Math. Phys. Sci.,
$65$, Reidel, Dordrecht-Boston, Mass., $1981$, $69-96$.
\bibitem{ho1} Hörmander, L.: \emph{The Analysis of Linear Partial
Differential Operators I: Distribution Theory and Fourier Analysis}, Springer-Verlag
Berlin $1983$.
\bibitem{ho2} Hörmander, L.: \emph{The Analysis of Linear Partial
Differential Operators II: Differential Operators with Constant Coefficients}, Springer-Verlag
Berlin $1983$.
\bibitem{ho3} Hörmander, L.: \emph{The Analysis of Linear Partial
Differential Operators III: Pseudo-Differential Operators}, Springer-Verlag
Berlin $1985$.
\bibitem{ho4} Hörmander, L.: \emph{The Analysis of Linear Partial
Differential Operators IV: Fourier Integral Operators}, Springer-Verlag
Berlin $1985$.
\bibitem{le} Lewy, H.: \emph{An Example of a Smooth Linear Partial
Differential Equation Without Solution}, Ann. of Math. 
$\mathbf{66}$, $155-158$ $(1957)$.
\bibitem{mo} Moyer, R.D.: \emph{Local solvability in two dimensions:
Necessary conditions for the principle-type case}, Mimeographed manuscript,
University of Kansas $1978$.
\bibitem{nitr} Nirenberg, L. and F. Treves: \emph{On loval solvability
of linear partial differential equations. Part I: Necessary conditions},
Comm. Pure Appl. Math. $\mathbf{23}$, $1-38$ $(1970)$, \emph{Part II: Sufficient
conditions}, Comm. Pure Appl. Math. $\mathbf{23}$, $459-509$ $(1970)$;
\emph{Correction} Comm. Pure Appl. Math. $\mathbf{24}$, $279-288$ $(1971)$.
\end{thebibliography}
\end{document}